\newcommand{\we}{w^{\eps}}
\newcommand{\ze}{z^{\eps}}
\newcommand{\sumi}{\sum_{i=1}^{I}}
\newcommand{\bra}[1]{\left(#1\right)}
\newcommand{\sbra}[1]{\left[#1\right]}
\newcommand{\E}{\mathcal E}
\newcommand{\D}{\mathcal D}
\renewcommand{\mathfrak}{\mathscr}
\renewcommand{\omega}{\varphi}
\renewcommand{\zeta}{\xi}
\newcommand{\ue}{u^{\eps}}
\newcommand{\ve}{v^{\eps}}
\newcommand{\na}{\nabla}
\numberwithin{equation}{section}
\newtheorem{theorem}{Theorem}[section]
\newtheorem{definition}[theorem]{Definition}
\newtheorem{lemma}[theorem]{Lemma}
\newtheorem{remark}[theorem]{Remark}
\newcommand{\supp}{{\rm supp}}
\newcommand{\eps}{\varepsilon}
\newcommand{\ol}{\overline}
\newcommand{\wh}{\widehat}
\newcommand{\coleq}{\mathrel{\mathop:}=}
\title[Renormalised solutions and equilibration with non-linear diffusion]{Global renormalised solutions and equilibration of reaction--diffusion systems with non-linear diffusion}
\author[K. Fellner]{Klemens Fellner}
\address{Klemens Fellner \hfill\break
	Institute of Mathematics and Scientific Computing, University of Graz, 
	Heinrichstrasse 36, 8010 Graz, Austria}
\email{klemens.fellner@uni-graz.at}
\author[J. Fischer]{Julian Fischer}
\address{Julian Fischer \hfill\break
	Institute of Science and Technology Austria,
	Am Campus 1, 3400 Klosterneuburg, Austria}
\email{julian.fischer@ist.ac.at}
\author[M. Kniely]{Michael Kniely}
\address{Michael Kniely \hfill\break
	Faculty of Mathematics, TU Dortmund University, 
	Vogelpothsweg 87, 44227 Dortmund, Germany}
\email{michael.kniely@tu-dortmund.de}
\author[B. Q. Tang]{Bao Quoc Tang}
\address{Bao Quoc Tang \hfill\break
	Institute of Mathematics and Scientific Computing, University of Graz, 
	Heinrichstrasse 36, 8010 Graz, Austria}
\email{quoc.tang@uni-graz.at, baotangquoc@gmail.com}
\subjclass[2020]{Primary 35K57; Secondary 35B40, 35D99, 35Q92}
\keywords{Reaction--diffusion systems; non-linear diffusion; renormalised solutions; chemical reaction networks; convergence to equilibrium; entropy method}
\begin{document}
\begin{abstract}
	The global existence of renormalised solutions and convergence to equilibrium for reaction--diffusion systems with non-linear diffusion are investigated. The system is assumed to have quasi-positive non-linearities and to satisfy an entropy inequality. The difficulties in establishing global renormalised solutions caused by possibly degenerate diffusion are overcome by introducing a new class of weighted truncation functions.
	
	By means of the obtained global renormalised solutions, we study the large-time behaviour of complex balanced systems arising from chemical reaction network theory with non-linear diffusion. When the reaction network does not admit boundary equilibria, the complex balanced equilibrium is shown, by using the entropy method, to exponentially attract all renormalised solutions in the same compatibility class. %This greatly improves recent results in \cite{FLT20} where only a single reversible reaction has been considered. 
	This convergence extends even to a range of non-linear diffusion, where global existence is an open problem, yet we are able to show that solutions to approximate systems converge exponentially to equilibrium uniformly in the regularisation parameter.
\end{abstract}
\maketitle
%\vfill
\tableofcontents
%\vfill
\section{Introduction and main results}
	In this paper, we consider the evolution of concentrations $u = (u_1, \ldots, u_I)$, $I \in \mathbb N$, in a bounded domain $\Omega\subset \mathbb R^d$ with Lipschitz boundary $\partial\Omega$ subject to non-linear diffusion and reactions as modelled by the parabolic system 
	\begin{equation}\label{e0}
		\begin{cases}
		\begin{aligned}
			\partial_tu_i - d_i\Delta u_{i}^{m_i} &= f_i(u) &&\text{ in } Q_T,\\
			\nabla u_i^{m_i}\cdot \nu &= 0 &&\text{ on } \Gamma_T,\\
			u_i(\cdot,0) &= u_{i,0} &&\text{ in } \Omega.
		\end{aligned}
		\end{cases}
	\end{equation}
	Here and below, we set $Q_T \coleq \Omega \times (0, T)$ and $\Gamma_T \coleq \partial \Omega \times (0, T)$ for any $T > 0$, and we employ $\nu$ to denote the unit outward normal vector to $\partial\Omega$. The diffusion constants $d_i$ and the non-linear diffusion exponents $m_i$ are assumed to satisfy $d_i > 0$ and $0 < m_i < 2$. For proving exponential convergence to equilibrium, we further demand $m_i > (d-2)_+/d$ with $(d-2)_+ \coleq \max\{0, d-2\}$. We will comment on the bounds on $m_i$ later on. Moreover, we generically assume $u_{i,0} \in L^1(\Omega)$, $u_{i,0} \geq 0$, and impose the following assumptions on the reaction terms $f_i(u)$, $i=1,\ldots, I$:
	\begin{enumerate}[label=(F\theenumi),ref=F\theenumi]
		\item\label{F1}(Local Lipschitz continuity) $f_i: \mathbb R_+^I \rightarrow \mathbb R$ is locally Lipschitz continuous.
		\item\label{F2} (Quasi-positivity) $f_i(u)\geq 0$ for all $u = (u_1, \ldots, u_I)\in \mathbb R_+^I$ with $u_i = 0$. 
		\item\label{F3} (Entropy inequality) There exist constants $(\mu_i)_{i=1,\ldots, I} \in \mathbb R^I$ satisfying
		\begin{equation}\label{entropy-inequality}	
		\sum_{i=1}^{I}f_i(u)(\log u_i+\mu_i) \leq C \, \sumi \bra{1 + u_i\log u_i}
		\end{equation}
		for all $u\in \mathbb R_+^I$, where $C>0$ is independent of $u$.
	\end{enumerate}
	The second assumption \eqref{F2} guarantees non-negativity of solutions provided that the initial data are non-negative, which is a natural assumption since we consider here $u_i$ as concentrations or densities. 
	Assumption \eqref{F3}, on the other hand, implies existence and control of an \emph{entropy} of the system and it is only slightly stronger than assuming control of the \emph{mass}, i.e.
	\begin{equation}\label{mass-inequality}
		\sum_{i=1}^{I}f_i(u) \leq 0.
	\end{equation}
	
	In general, the assumptions \eqref{F1}, \eqref{F2}, and either \eqref{F3} or \eqref{mass-inequality} of the non-linearities $f_i$ are not enough to obtain suitable a priori estimates which allow to extend local strong solutions globally, see e.g.\ \cite{pierre2000blowup}. On the one hand, for the case of linear diffusion (i.e.\ $m_i=1$ for all $i=1,\ldots, I$), the global existence for \eqref{e0} under extra assumptions on non-linearities has been extensively investigated. For instance, global bounded solutions were shown under \eqref{F3} for the case $d=1$ with cubic non-linearities and $d=2$ with quadratic non-linearities, see \cite{goudon2010regularity,tang2018global}. Recent works \cite{souplet2018global,caputo2019solutions,fellner2020global} showed under either \eqref{F3} or \eqref{mass-inequality} that bounded solutions can be obtained for (slightly super-)quadratic non-linearities in \textit{all dimensions}. We refer the reader to the extensive review \cite{pierre2010global} for related results concerning global existence of bounded or weak solutions. It is remarked that in all such results, additional assumptions on non-linearities must be imposed. A breakthrough has been made in \cite{Fis15} where global {\it renormalised solutions} were obtained without any extra assumptions on the non-linearities. The notion of renormalised solution was initially introduced for Boltzmann's equation in \cite{DL89} and it has been applied subsequently to many other problems; see e.g.\ \cite{DMMOP99,DFPV07,CJ19}. The case of non-linear, possibly degenerate diffusion is, on the other hand, much less understood. Up to our knowledge, there are only two works (\cite{LP17} and \cite{FLT20}) which showed global existence of weak or bounded solutions for porous medium type of diffusion, i.e. $m_i>1$, under \eqref{mass-inequality} and some restricted growth conditions on the non-linearities. A recent preprint, however, provides renormalised solutions to a system as in \eqref{e0} but imposing more regularity on the initial data and the underlying domain \cite{LW21}. Instead of an entropy condition as in \eqref{entropy-inequality}, the authors of \cite{LW21} assume an additional bound on cross-absorptive reaction terms. Furthermore, \cite{FLT20} proved the convergence to equilibrium for \eqref{e0} modelling a single reversible chemical reaction.
	
	\medskip
	The present study shows the existence of global renormalised solutions to \eqref{e0} featuring non-linear diffusion of porous medium type or fast diffusion \textit{without any extra conditions on the non-linearities} (up to assuming \eqref{F1}--\eqref{F3}). In addition, we show that these solutions, in case \eqref{e0} models general complex balanced chemical reaction networks, converge exponentially to equilibrium with explicitly computable rates. Our paper seems to be the first extensive contribution to the non-linear diffusion type system \eqref{e0} establishing---for specific parameter regimes---the existence of global renormalised (or even weak and bounded) solutions as well as exponential convergence to equilibrium in various $L^p$ spaces. A brief summary of our results is given in Table \ref{tablesummarisation} at the end of this section.

	\medskip
	{The first part of this paper} is concerned with the global existence of renormalised solutions to \eqref{e0}. We consider the following definition of renormalised solutions.
	\begin{definition}[Renormalised solutions]\label{def-renormalised}
		Let $m_i < 2$ for all $1 \leq i \leq I$ and $u_0 = (u_{i,0})_i \in L^1(\Omega)^I$ be (componentwise) non-negative. A non-negative function $u = (u_1, \ldots, u_I): \Omega\times (0,\infty) \to \mathbb R_+^I$ is called a renormalised solution to \eqref{e0} with initial data $u_0$ if $u_i\in L^\infty_{loc}(0, \infty; L^1(\Omega))$ and $\nabla {{u_i}^{\frac{m_i}{2}}}\in L^2_{loc}(0, \infty; L^2(\Omega)^I)$, and if for any smooth function $\xi: \mathbb R_+^I \to \mathbb R$ with compactly supported derivative $D\xi$ and any function $\psi \in C^{\infty}(\overline{\Omega} \times (0,\infty))$, we have
		\begin{equation}\label{renormalised-form}
			\begin{aligned}
				&\int_{\Omega}\xi(u(\cdot, T))\psi(\cdot, T) \, dx - \int_{\Omega}\xi(u_0)\psi(\cdot, 0) \, dx - \int_{Q_T}\xi(u)\partial_t\psi \, dx \, dt \\
				&\quad = -\sum_{i,j=1}^{I} d_i m_i \int_{Q_T}\psi\partial_i\partial_j\xi(u)u_i^{m_i-1}\nabla u_i\cdot \nabla u_j \, dx \, dt \\
				&\qquad - \sum_{i=1}^{I} d_i m_i \int_{Q_T} \partial_i\xi(u) u_i^{m_i-1}\nabla u_i \cdot \nabla \psi \, dx \, dt \\
				&\qquad + \sum_{i=1}^{I}\int_{Q_T}\partial_i\xi(u)f_i(u)\psi \, dx \, dt 
			\end{aligned}
		\end{equation}
		for a.e.\ $T>0$.
	\end{definition}
	We notice that all integrals in \eqref{renormalised-form} are well-defined due to the compact support of $D\xi$ and as the integrals relating to gradients of solutions can be rewritten as	
	\begin{equation*}
		\int_{Q_T}\psi\partial_i\partial_j\xi(u)u_i^{m_i-1}\nabla u_i\cdot \nabla u_j \, dx \, dt  = \frac{4}{m_im_j}\int_{Q_T}\psi \partial_i\partial_j\xi(u)u_i^{\frac{m_i}{2}}u_j^{1-\frac{m_j}{2}}\nabla u_i^{\frac{m_i}{2}} \cdot \nabla u_j^{\frac{m_j}{2}} \, dx \, dt 
	\end{equation*}
	and
	\begin{equation*}
		\int_{Q_T}\partial_i\xi(u)u_i^{m_i-1}\nabla u_i\nabla \psi \, dx \, dt  = \frac{2}{m_i} \int_{Q_T}\partial_i\xi(u)u_i^{\frac{m_i}{2}}\nabla  u_i^{\frac{m_i}{2}} \nabla \psi \, dx \, dt 
	\end{equation*}
	which are both finite thanks to the boundedness of $\nabla u_i^{\frac{m_i}{2}}$ in $L^2(Q_T)$ and the restriction $m_i < 2$ for all $1 \leq i \leq I$.

	\medskip	
	The first main result of this article is the following theorem.
	
	\begin{theorem}[Global existence of renormalised solutions]\label{existence-renormalised}
		Let $\Omega\subset \mathbb R^d$ be a bounded domain with Lipschitz boundary $\partial\Omega$. Assume $m_i<2$ for all $i=1,\ldots, I,$ and that the conditions \eqref{F1}--\eqref{F3} hold. Then, for any non-negative initial data $u_0 = (u_{i,0})_i \in L^1(\Omega)^I$ subject to $$\sum_{i=1}^{I}\int_{\Omega}u_{i,0}\log u_{i,0} \, dx < \infty,$$ there exists a global renormalised solution to \eqref{e0}. %in the sense of Definition \ref{def-renormalised}.
		
		\medskip
		Moreover, any renormalised solution satisfies the following weak entropy--entropy dissipation relation for a.e.\ $0\leq s < T$:
		\begin{equation}\label{entropy_law}
		\begin{aligned}
			\sum_{i=1}^I\int_{\Omega}u_i(\log u_i +\mu_i - 1) \, dx\bigg|_s^T \leq &-\sum_{i=1}^I\frac{4d_i}{m_i^2}\int_s^T\int_\Omega |\nabla (u_i)^\frac{m_i}{2}|^2 \, dx \, dt \\
			&+ \sum_{i=1}^I\int_s^T\int_{\Omega}f_i(u)(\log u_i + \mu_i) \, dx \, dt \leq 0.
		\end{aligned}
		\end{equation}
		
		\medskip
		In addition, if there exist some numbers $q_i \in \mathbb R$ such that $\sum_{i=1}^I q_if_i(u) = 0$, then all renormalised solutions satisfy
		\begin{equation}\label{conservation_law}
			\sum_{i=1}^I \int_{\Omega} q_iu_i(x,t) \, dx = \sum_{i=1}^I \int_{\Omega} q_iu_{i,0}(x) \, dx
		\end{equation}
		for a.e.\ $t>0$.
	\end{theorem}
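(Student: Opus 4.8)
The plan is to construct the global renormalised solution as the limit of solutions to non-degenerate, reaction-truncated approximate problems, and then to read off the entropy relation \eqref{entropy_law} and the conservation law \eqref{conservation_law} directly from the renormalised formulation \eqref{renormalised-form} by inserting suitable renormalisers $\xi$. For the existence part I would regularise in two ways: replace $f_i$ by bounded, globally Lipschitz truncations $f_i^{(n)}$, e.g.\ $f_i^{(n)}(u)=f_i(u)/(1+n^{-1}\sum_j|f_j(u)|)$, chosen so that quasi-positivity \eqref{F2} and the entropy inequality \eqref{F3} survive uniformly in $n$; and approximate $u_{i,0}$ by non-negative bounded data $u_{i,0}^{(n)}\to u_{i,0}$ in $L^1(\Omega)$ with convergent entropies. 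Since the reactions are now bounded and the diffusion is of porous-medium/fast-diffusion type, standard theory provides non-negative bounded weak solutions $u^{(n)}$. The engine of all a priori bounds is the entropy structure: testing the $i$-th equation (formally) with $\log u_i^{(n)}+\mu_i$ and summing, the diffusion produces a positive multiple of $\sum_i\int|\nabla (u_i^{(n)})^{m_i/2}|^2$, while \eqref{F3} bounds the reaction contribution by $C\sum_i\int(1+u_i^{(n)}\log u_i^{(n)})$. Gr\"onwall's inequality then yields, uniformly in $n$ and locally in time, an entropy bound $\sup_t\sum_i\int_\Omega u_i^{(n)}\log u_i^{(n)}\,dx\le C(T)$ --- whence $\{u_i^{(n)}\}$ is equi-integrable and bounded in $L^\infty_{loc}(L^1)$ by de la Vall\'ee--Poussin --- and a dissipation bound $\sum_i\int_0^T\int_\Omega|\nabla (u_i^{(n)})^{m_i/2}|^2\le C(T)$.

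The decisive technical ingredient --- and the main obstacle --- is the passage to the limit in the quadratic gradient terms of \eqref{renormalised-form}, which is exactly where the degenerate diffusion bites. Since $D\xi$ is compactly supported, one localises to a region where $u$ is bounded; but as $u_i\to0$ the coefficient $u_i^{m_i-1}$ blows up (for $m_i<1$) or degenerates (for $m_i>1$), so the renormalisers must carry weights tuned to the exponents $m_i$ in order that the rewritten coefficients $u_i^{m_i/2}u_j^{1-m_j/2}$ (cf.\ the remark after Definition \ref{def-renormalised}) stay controlled on $\supp D\xi$ --- this is the role of the new weighted truncation functions. Using the dissipation bound together with a time-regularity estimate extracted from the equation, an Aubin--Lions type argument gives relative compactness of $(u_i^{(n)})^{m_i/2}$ in $L^2_{loc}(Q_T)$ and hence $u_i^{(n)}\to u_i$ a.e.; combined with equi-integrability this upgrades to strong $L^1_{loc}$ convergence, so that $f_i^{(n)}(u^{(n)})\to f_i(u)$ and the bounded nonlinear coefficients $\xi(u^{(n)}),\partial_i\xi(u^{(n)}),\partial_i\partial_j\xi(u^{(n)})$ converge strongly. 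The genuinely hard point is the product of two merely weakly converging gradients in the Hessian term: one must exclude a concentration defect by proving strong $L^2_{loc}$ convergence of $\nabla (u_i^{(n)})^{m_i/2}$ on the truncation set --- equivalently, that the truncated entropy dissipation converges rather than only being lower semicontinuous --- and it is precisely the weighted truncation inequalities that make the comparison with the limiting dissipation work. Granting this, every term in \eqref{renormalised-form} passes to the limit and $u$ is a renormalised solution, with the regularity $u_i\in L^\infty_{loc}(L^1)$ and $\nabla u_i^{m_i/2}\in L^2_{loc}(L^2)$ inherited from the uniform estimates.

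For the weak entropy--entropy dissipation relation \eqref{entropy_law}, valid for \emph{any} renormalised solution, I would apply \eqref{renormalised-form} with $\psi\equiv1$ to a sequence of renormalisers $\xi_K$ satisfying $\xi_K(z)\to\sum_i z_i(\log z_i+\mu_i-1)$ and $\partial_i\xi_K(z)\to\log z_i+\mu_i$, with $D\xi_K$ truncated to have compact support. The time-derivative term and the $\nabla\psi$-term drop out, the Hessian term reduces, as $K\to\infty$, to the entropy dissipation on the right-hand side of \eqref{entropy_law}, and here I would invoke weak lower semicontinuity / Fatou --- which is what turns the identity into the first inequality. The reaction term converges to $\sum_i\int f_i(u)(\log u_i+\mu_i)$, bounded above by \eqref{F3}, giving the final $\le0$.

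Finally, for the conservation law \eqref{conservation_law} I would insert $\xi_K(z)=\sum_i q_i\beta_K(z_i)$, with $\beta_K$ a smooth truncation of the identity ($\beta_K(s)=s$ for $s\le K$ and $\beta_K'$ compactly supported), and again $\psi\equiv1$. Then $\partial_t\psi=\nabla\psi=0$ eliminate two terms; the Hessian term carries $\beta_K''(u_i)$, supported on $\{u_i\approx K\}$, and vanishes as $K\to\infty$ by the dissipation bound together with $|\{u_i\ge K\}|\to0$; and the reaction term tends to $\sum_i q_i\int_{Q_T}f_i(u)=0$ by hypothesis. This yields $\sum_i\int_\Omega q_iu_i(\cdot,T)=\sum_i\int_\Omega q_iu_{i,0}$ for a.e.\ $T$, as claimed.
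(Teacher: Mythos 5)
Your overall architecture (truncated reactions $f_i/(1+\varepsilon|f|)$ with regularised data, entropy/dissipation a priori bounds via testing with $\log u_i+\mu_i$, weighted truncations adapted to the exponents $m_i$, Aubin--Lions compactness, then special renormalisers with $\psi\equiv1$ for \eqref{entropy_law} and \eqref{conservation_law}) matches the paper, and your treatment of the entropy law is essentially the paper's. However, there are two genuine gaps.

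First, in the existence part you reduce the passage to the limit in the Hessian term to proving \emph{strong} $L^2_{loc}$ convergence of $\nabla(u_i^{(n)})^{m_i/2}$ on the truncation set, and then write ``granting this''. That is precisely the step you cannot grant: the only uniform gradient information available is the $L^2(Q_T)$ bound on $\nabla(u_i^{(n)})^{m_i/2}$ coming from the entropy dissipation, which yields only weak convergence, and there is no energy identity for the approximate problems that would upgrade it. The paper does \emph{not} prove strong convergence of the gradients. Instead it passes to the limit $\varepsilon\to0$ at \emph{fixed} truncation level $E$, accepting a defect signed Radon measure $\mu_i^E$ in the equation \eqref{defect-measure} for $\omega_i^E(u)$, and then shows $|\mu_i^E|(\overline{Q}_T)\to0$ as $E\to\infty$; this uses the key weighted bound \ref{E2}, the local vanishing \ref{E7}, and a dyadic decomposition over the sets $\{K-1\leq|u^\varepsilon|<K\}$, followed by the weak chain rule (Lemma \ref{weak-chain}) to obtain the equation for $\xi(\omega^E(u))$ before the final limit $E\to\infty$. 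Your outline omits this mechanism entirely, and without it (or a genuinely new argument for strong gradient convergence) the quadratic gradient term does not close.

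Second, your conservation-law argument with $\xi_K(z)=\sum_i q_i\beta_K(z_i)$ fails at the reaction term. After renormalising you are left with $\sum_i q_i\int_{Q_T}\beta_K'(u_i)f_i(u)\,dx\,dt$; the species-dependent cutoffs $\beta_K'(u_i)$ destroy the pointwise cancellation $\sum_i q_if_i(u)=0$, and since $f_i$ is only locally Lipschitz with no growth restriction while $u$ merely satisfies $u_i\log u_i\in L^1$, the individual terms $f_i(u)$ need not be integrable; hence the limit $K\to\infty$ cannot be identified with $\int\sum_i q_if_i(u)\,dx\,dt=0$. (Replacing this by a single cutoff $\beta_K\bra{\sum_i q_iz_i}$ is not admissible either, because its derivative is not compactly supported on $\mathbb R^I_+$.) The paper circumvents this by taking $\xi=\theta_M\bra{\varrho\sum_i q_iu_i+\sum_i(u_i+\varepsilon)(\log(u_i+\varepsilon)+\mu_i-1)}$, so that the cancellation occurs inside the single factor $\theta_M'(\cdots)$ and the surviving reaction contribution is controlled by \eqref{entropy-inequality}; dividing by $\varrho$ and sending $\varrho\to+\infty$ and $\varrho\to-\infty$ then yields the two inequalities that give \eqref{conservation_law}. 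Your Hessian estimate for this part is fine, but the reaction term needs the paper's device or an equivalent.
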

	
	\begin{remark}\hfill
	\begin{itemize}
		\item The relation \eqref{conservation_law} usually corresponds to mass conservation laws in chemical reactions, see e.g.\ \cite{FT18}.
		\item The uniqueness of renormalised solutions is widely open. For a weaker notion called weak-strong uniqueness, i.e. the renormalised solution is unique as long as a strong solution exists, we refer the interested reader to \cite{Fis17}.
		\item The fact that \eqref{entropy_law} and \eqref{conservation_law} are satisfied by {\normalfont any} renormalised solution plays an important role in the next part of this paper where we show that {\normalfont all} renormalised solutions, instead of only some of them, converge to equilibrium.
	\end{itemize}
	\end{remark}

	The strategy for constructing a global renormalised solution follows the ideas in \cite{Fis15}. Given a sequence of approximate solutions $u^\varepsilon$, one proves compactness of the family $u^\varepsilon$ by deriving bounds on truncations $\varphi_i^E(u^\varepsilon)$, $E \in \mathbb N$, $i = 1, \dotsc, I$, and subsequently employing an Aubin--Lions lemma. The smooth functions $\varphi_i^E$ defined in \eqref{omega} essentially truncate the mappings $u \mapsto u_i$ if $u$ becomes too large (measured in terms of $E$) while leaving $u \mapsto u_i$ unchanged for sufficiently small $u$. The main difficulty in our current situation is caused by the different diffusion exponents $m_i$ for each species $u_i$, which makes the truncations $\varphi_i^E$ provided in \cite{Fis15} not applicable. We overcome this issue by modifying the functions $\varphi_i^E$ in such a way that the truncation is not decided by $\sum_j u_j$ but by a weighted sum $\sum_j E^{-\alpha^i_j} u_j$ with suitable constants $\alpha^i_j \geq 1$ chosen depending on the diffusion exponents $m_i$. We then pass to the limit $\varepsilon \rightarrow 0$ in the equation for $\varphi_i^E(u^\varepsilon)$ at the cost of an additional defect measure which, nevertheless, vanishes in the subsequent limit $E \rightarrow \infty$. Finally, an equation for $\xi(\varphi_i^E(u))$ is derived, where $\xi$ is subject to the same assumptions as in Definition \ref{def-renormalised}, and the limit $E \rightarrow \infty$ is performed resulting in the desired equation \eqref{renormalised-form} for $\xi(u)$.

	\medskip
	The restriction $m_i<2$ for $i=1,\ldots, I$ appears already in the definition of renormalised solutions in Definition \ref{def-renormalised}. It is also crucial in showing the vanishing of the defect measure when establishing the renormalised solution. We do not know whether this restriction is purely technical or due to a deeper reason. It is remarked that the same condition was crucially imposed in \cite{LP17}, where global weak solutions to \eqref{e0} were investigated.
	
	\medskip
	In the second part of this paper, we study the large-time behaviour of reaction--diffusion systems of type \eqref{e0} modelling chemical reaction networks subject to the complex balance condition. The main vocabulary is introduced below but we refer to \cite{DFT17} for a more detailed discussion of the involved concepts. We assume that there are $I$ chemicals $S_1, \ldots, S_I$ reacting via the following $R$ reactions
	\begin{equation}\label{reaction1}
		y_{r,1}S_1 + \cdots + y_{r,I}S_I \xrightarrow{k_r} y_{r,1}'S_1 + \cdots + y_{r,I}' S_I \quad \text{ for all } \quad r = 1,\ldots, R
	\end{equation}
	where $y_{r,i}, y_{r,i}' \in \{0\}\cup [1,\infty)$ are stoichiometric coefficients, and $k_r>0$ are reaction rate constants. Utilizing the notation $y_r = (y_{r,i})_{i=1,\ldots, I}$ and $y_r' = (y_{r,i}')_{i=1,\ldots, I}$, we can rewrite the reactions in \eqref{reaction1} as
	\begin{equation}\label{reaction2}
		y_r \xrightarrow{k_r} y_r' \quad \text{ for all } \quad r=1,\ldots, R.
	\end{equation}
	%We denote by $\mathfrak S = \{S_1,\ldots, S_I\}$ the set of chemicals, $\mathfrak C = \{y_r, y_r'\}_{r=1,\ldots, R}$ the set of complexes, $\mathfrak R = \{y_r \to y_r'\}_{r=1,\ldots, R}$ the set of reactions, and $\mathfrak K = \{k_r\}_{r=1,\ldots, R}$ the set of positive reaction rate constants. The quadruple $\{\frak S, \frak C, \frak R, \frak K\}$ is called a chemical reaction network if the following natural conditions are fulfilled: (i) each chemical must appear at least in one complex, that is for any $S_i\in \mathfrak{S}$, there exists at least one complex $y\in \mathfrak{C}$ such that the stoichiometric coefficient of $S_i$ in $y$ is positive, i.e.\ $y_i>0$; (ii) there is no trivial reaction $y\to y$ for any $y\in \mathfrak{C}$; (iii) for any complex $y\in \mathfrak{C}$ there exists at least one reaction containing $y$ either as a reactant or a product, i.e.\ there exists $y'\in \mathfrak{C}$ such that $y\to y'\in \mathfrak{R}$ or $y' \to y \in \mathfrak{R}$.
	
	\medskip	
	%Assume that the reaction network takes place in a bounded vessel $\Omega\subset\mathbb R^N$ whose boundary $\partial\Omega$ is regular (e.g.\ $\partial\Omega$ is of class $C^{2+\epsilon}$ for $\epsilon>0$). 
	Denote by $u_i(x,t)$ the concentration of $S_i$ at position $x\in\Omega$ and time $t\geq 0$. The reaction network \eqref{reaction1} results in the following reaction--diffusion system with non-linear diffusion for $u = (u_1, \ldots, u_I)$:
	\begin{equation}\label{mass-action-system}
		\begin{cases}
		\begin{aligned}
			\partial_t u_i - d_i\Delta u_i^{m_i} &= f_i(u) &&\text{ in } Q_T,\\
			\nabla u_i^{m_i}\cdot \nu &= 0 &&\text{ on } \Gamma_T,\\
			u_i(x,0) &= u_{i,0}(x) &&\text{ in } \Omega,
		\end{aligned}
		\end{cases}
	\end{equation}
	in which the reaction term $f_i(u)$ is determined using the \emph{mass action law},
	\begin{equation}\label{reaction-term}
		f_i(u) = \sum_{r=1}^{R}k_ru^{y_r}(y'_{r,i} - y_{r,i}) \quad \text{ where } \quad u^{y_r} = \prod_{i=1}^{I}u_i^{y_{r,i}}.
	\end{equation}
	It is obvious that $f_i(u)$ is locally Lipschitz continuous, hence \eqref{F1} is satisfied. For \eqref{F2}, it is easy to check that $f_i(u) \geq 0$ holds true for all $u\in \mathbb R_{+}^I$ satisfying $u_i = 0$ since we assume $k_r>0$ and $y\in \mathbb (\{0\}\cup [1,\infty))^I$ for all $y\in \{y_r, y_r' \}_{r=1,\ldots, R}$. Before verifying \eqref{F3}, we recall the definition of a \emph{complex balanced equilibrium}. A constant state $u_\infty = (u_{i,\infty})_i \in [0,\infty)^I$ is called a complex balanced equilibrium for \eqref{mass-action-system}--\eqref{reaction-term} if
	\begin{equation}\label{cbe}
		 \sum_{\{r: y_r=y\}}k_ru_{\infty}^{y_r} = \sum_{\{s: y_s' = y\}}k_su_{\infty}^{y_s} \quad \text{ for all } \quad y\in \{y_r, y_r' \}_{r=1,\ldots, R}.
	\end{equation}
	Intuitively, this condition means that for any complex the total outflow from the complex and the total inflow into the complex are balanced at such an equilibrium. Using complex balanced equilibria, one can show that the non-linearities in \eqref{reaction-term} satisfy \eqref{F3} by choosing $\mu_i = -\log u_{i,\infty}$ with a strictly positive complex balanced equilibrium $u_\infty$ (see \cite[Proposition 2.1]{DFT17}). Therefore, one can apply Theorem \ref{existence-renormalised} to obtain global renormalised solutions to \eqref{mass-action-system}--\eqref{reaction-term}.
	
	\medskip
	In general, there might exist infinitely many solutions to \eqref{cbe}. To uniquely identify a \textit{positive} equilibrium, we need a set of conservation laws. More precisely, we consider the Wegscheider matrix (or stoichiometric coefficient matrix)
	\begin{equation*}
		W = \big[ (y'_r - y_r)_{r=1,\ldots, R} \big]^T \in \mathbb R^{R\times I},
	\end{equation*}
	and define $m = \dim(\ker(W))$. If $m>0$, then we can choose a matrix $\mathbb Q \in \mathbb R^{m\times I}$, where the rows form a basis of $\ker(W)$. Note that this implies $\mathbb Q\, (y_r' - y_r) = 0$ for all $r=1,\ldots, R$. Therefore, by recalling $f(u) = (f_i(u))_{i=1,\ldots, I} = \sum_{r=1}^{R}k_ru^{y_r}(y_r' - y_r)$ we obtain
	\begin{equation*}
		\mathbb Q\, f(u) = \sum_{r=1}^{R}k_ru^{y_r}\mathbb Q(y_r' - y_r) = 0 \quad \text{ for any } \quad u\in \mathbb R^I.
	\end{equation*}
	By using the homogeneous Neumann boundary condition, we can then (formally) compute that
	\begin{equation*}
		\frac{d}{dt}\int_{\Omega}\mathbb Q\,u(t) \, dx = \int_{\Omega}\mathbb Q\,f(u) \, dx = 0
	\end{equation*}
	and, consequently, for a.e.\ $t > 0$,
	\begin{equation*}
		\mathbb Q\int_{\Omega}u(t) \, dx = \mathbb Q\int_{\Omega}u_0 \, dx.
	\end{equation*}
	In other words, system \eqref{mass-action-system}--\eqref{reaction-term} possesses $m$ linearly independent mass conservation laws. By the vocabulary used in the literature on chemical reaction network theory, for any fixed non-negative mass vector $M\in \mathbb R^m$, the set $\{u\in \mathbb R^I_+: \mathbb Q\,u=M\}$ is called the {\it compatibility class} corresponding to $M$. In case that for each strictly positive vector $M \in \mathbb R^m$, there exists a strictly positive complex balanced equilibrium, one says that the chemical reaction network is complex balanced. This terminology is well-defined since it was proved in \cite{Hor73} that if one equilibrium is complex balanced, then all equilibria are complex balanced. It was further shown that for each non-negative initial mass $M\in \mathbb R^m$ with $M \neq 0$, there exists a unique strictly positive complex balanced equilibrium $u_{\infty} = (u_{i,\infty})_i \in \mathbb R^I_+$ in the compatibility class corresponding to $M$ (cf.\ \cite{HJ72,Fei79}). 
	%In the theory of reaction networks, it was proved that if \eqref{reaction2} admits a complex balanced equilibrium, then all possible equilibria are also complex balanced. Such a reaction network is called {\it complex balanced}. For complex balanced reaction networks, it was shown in \cite{Fei79} that for each non-negative initial mass $M\in \mathbb R^m_+$, there exists a unique {\normalfont strictly positive} complex balanced equilibrium $u_{\infty} = (u_{1,\infty}, \ldots, u_{I,\infty})\in\mathbb R^I_+$ in the compatibility class corresponding to $M$.
	We stress that additionally there might exist many so-called {\it boundary equilibria}, which are complex balanced equilibria lying on $\partial \mathbb R_+^I$. It is remarked that there exists a large class of complex balanced chemical reaction networks, called concordant networks, which do not have boundary equilibria, see \cite{shinar2013concordant}. For the sake of brevity, from now on we call the unique strictly positive complex balanced equilibrium simply the {\it complex balanced equilibrium}. Note that all the previous considerations---although established for ODE models for chemical reaction networks---also apply to our PDE setting thanks to the homogeneous Neumann boundary conditions, which allow for spatially homogeneous equilibria.
	
	\begin{theorem}[Exponential equilibration of renormalised solutions]\label{thm2}
%		Assume that the reaction--diffusion system \eqref{mass-action-system}--\eqref{reaction-term} has a positive complex balanced equilibrium. 
		Let $\Omega\subset \mathbb R^d$ be a bounded domain with Lipschitz boundary $\partial\Omega$. Assume that the reaction network \eqref{reaction2} is complex balanced, suppose that \eqref{F1}--\eqref{F3} hold, and let any non-negative initial data $u_0 = (u_{i,0})_i \in L^1(\Omega)^I$ with $\sum_{i=1}^I\int_{\Omega}u_{i,0}\log u_{i,0} \, dx < \infty$ be given.
		\begin{itemize}
		\item If $m_i<2$ for all $1 \leq i \leq I$, there exists a global renormalised solution to \eqref{mass-action-system}--\eqref{reaction-term}. 
		\item If $\frac{(d-2)_+}{d}<m_i$ for all $1 \leq i \leq I$ and assuming that there exist no boundary equilibria, any renormalised solution to \eqref{mass-action-system}--\eqref{reaction-term} converges exponentially to the positive equilibrium $u_\infty \in \mathbb R^I_+$ in the same compatibility class as $u_0$ with a rate which can be explicitly computed up to a finite dimensional inequality, i.e.
		\begin{equation*}
			\sum_{i=1}^I\|u_i(t) - u_{i,\infty}\|_{L^1(\Omega)} \leq Ce^{-\lambda t} \quad \text{ for all } \quad t\geq 0,
		\end{equation*} 
		for some positive constants $C, \lambda > 0$.
		\end{itemize}
	\end{theorem}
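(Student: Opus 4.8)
\emph{Existence (first bullet).} For the mass--action reaction terms \eqref{reaction-term} the hypotheses \eqref{F1}--\eqref{F3} are satisfied, the entropy inequality \eqref{F3} holding with $\mu_i=-\log u_{i,\infty}$ for the positive complex balanced equilibrium $u_\infty$ (as recalled above). Hence, as soon as $m_i<2$ for all $i$, Theorem~\ref{existence-renormalised} directly furnishes a global renormalised solution, and I would devote the rest of the proof to the equilibration statement in the regime $(d-2)_+/d<m_i$.

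\emph{Entropy method and reduction to an EED inequality.} I would work with the relative entropy
\[
E(t)\coleq\sum_{i=1}^I\int_\Omega\Big(u_i\log\tfrac{u_i}{u_{i,\infty}}-u_i+u_{i,\infty}\Big)\,dx,
\]
which coincides, up to the additive constant $\sum_i u_{i,\infty}|\Omega|$, with the functional in \eqref{entropy_law} for the choice $\mu_i=-\log u_{i,\infty}$, and which satisfies $E\ge0$ with equality iff $u\equiv u_\infty$. Specialising \eqref{entropy_law} to this $\mu_i$ gives, for a.e.\ $0\le s<T$, the integrated relation $E(T)+\int_s^T D(t)\,dt\le E(s)$ with dissipation $D=D_{\mathrm{diff}}+D_{\mathrm{react}}$, where $D_{\mathrm{diff}}=\sum_i\frac{4d_i}{m_i^2}\int_\Omega|\nabla u_i^{m_i/2}|^2\,dx$ and $D_{\mathrm{react}}=-\sum_i\int_\Omega f_i(u)(\log u_i+\mu_i)\,dx$ are both non-negative, the latter by the complex balanced structure. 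The whole argument then reduces to an \emph{entropy--entropy dissipation} (EED) inequality $D(t)\ge\lambda\,E(t)$ for some $\lambda>0$: since \eqref{entropy_law} holds for a.e.\ $0\le s<T$, the functional $E$ is a.e.\ non-increasing, and feeding $D\ge\lambda E$ together with this monotonicity into the integrated relation yields $E(t)\le E(0)e^{-\lambda t}$ for a.e.\ $t$; a Csisz\'ar--Kullback--Pinsker inequality $\sum_i\|u_i-u_{i,\infty}\|_{L^1(\Omega)}^2\le C\,E$ then upgrades this to the claimed $L^1$ decay with rate $\lambda/2$. The conservation law \eqref{conservation_law} guarantees that $u(t)$ and $u_\infty$ stay in the same compatibility class, so that $u_\infty$ is the correct reference state.

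\emph{Proof of the EED inequality.} Using $\int_\Omega u_i=|\Omega|\bar u_i$ with $\bar u_i\coleq|\Omega|^{-1}\int_\Omega u_i$, I would split
\[
E(t)=E_{\mathrm{spat}}(t)+E[\bar u\mid u_\infty],\qquad E_{\mathrm{spat}}(t)\coleq\sum_{i=1}^I\int_\Omega u_i\log\tfrac{u_i}{\bar u_i}\,dx\ge0,
\]
into a spatial--inhomogeneity part and the finite-dimensional relative entropy of the averages, and control the two summands separately. For $E_{\mathrm{spat}}$ the plan is to prove a \emph{generalised logarithmic Sobolev inequality} $E_{\mathrm{spat}}\le C_{\mathrm{LS}}\,D_{\mathrm{diff}}$; this is exactly where $m_i>(d-2)_+/d$ enters, since it forces, via $u_i^{m_i/2}\in H^1(\Omega)\hookrightarrow L^{2^*}(\Omega)$, the integrability $u_i\in L^{m_i d/(d-2)}(\Omega)$ (for $d>2$, the embedding being only better for $d\le2$) needed to bound the Boltzmann entropy of $u_i$ relative to its mean by $\int_\Omega|\nabla u_i^{m_i/2}|^2\,dx$, with a constant that stays uniform along the flow because the total entropy is non-increasing and the masses are controlled. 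For the finite-dimensional part I would invoke the EED inequality $D_{\mathrm{ODE}}[v]\ge\lambda_0\,\mathcal E_{\mathrm{ODE}}[v\mid u_\infty]$ for the complex balanced reaction ODE, which holds precisely because boundary equilibria are excluded; this is the announced ``finite dimensional inequality'', whose constant $\lambda_0$ need not be explicit. It then remains to bound $D_{\mathrm{ODE}}[\bar u]$ by the spatial reaction dissipation $D_{\mathrm{react}}$ up to an error proportional to $E_{\mathrm{spat}}$ via the standard additivity step, in which the discrepancy between $\overline{u^{y_r}}$ and $\bar u^{\,y_r}$ is controlled by the spatial fluctuations of the $u_i$, hence by $D_{\mathrm{diff}}$ through the functional inequality. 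Absorbing these fluctuation terms into $D_{\mathrm{diff}}$ (at the price of enlarging $C_{\mathrm{LS}}$) closes the chain and produces $D\ge\lambda E$.

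\emph{Main obstacle.} The crux is the generalised logarithmic Sobolev inequality for the nonlinear dissipation. In the linear case $m_i=1$ this is the classical log-Sobolev inequality $\int_\Omega u\log(u/\bar u)\,dx\le 4C\int_\Omega|\nabla\sqrt u|^2\,dx$, whereas here one must control the Boltzmann entropy of $u_i$ relative to its mean by the $L^2$-norm of the \emph{nonlinearly rescaled} gradient $\nabla u_i^{m_i/2}$, and---most delicately---with a constant uniform in time rather than merely finite for each fixed profile; this uniformity is exactly what the lower bound $m_i>(d-2)_+/d$ secures through the Sobolev embedding. A secondary difficulty is that \eqref{entropy_law} is only an \emph{integrated} inequality available for renormalised solutions, so the Gronwall step must be carried out in the integral form above (exploiting the a.e.\ monotonicity of $E$) and the Csisz\'ar--Kullback--Pinsker estimate justified at the mere $L^1\log L^1$ regularity at hand; both are routine once the EED inequality is established.
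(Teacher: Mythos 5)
Your overall architecture (existence via Theorem \ref{existence-renormalised}, entropy method, reduction to an entropy--entropy dissipation inequality, Csisz\'ar--Kullback--Pinsker, with \eqref{entropy_law} and \eqref{conservation_law} justifying everything at renormalised-solution regularity) matches the paper. The gap is in the step you yourself identify as the crux: the claimed generalised logarithmic Sobolev inequality $E_{\mathrm{spat}}\le C_{\mathrm{LS}}\,D_{\mathrm{diff}}$ with a constant uniform along the flow is \emph{false} whenever some $m_i>1$. A scaling argument shows this: for a fixed profile $v$ with $\overline v=1$ and $u_i=c\,v$, one has $\int_\Omega u_i\log(u_i/\overline{u_i})\,dx = c\int_\Omega v\log v\,dx$ while $\int_\Omega|\nabla u_i^{m_i/2}|^2\,dx = c^{m_i}\int_\Omega|\nabla v^{m_i/2}|^2\,dx$, so the ratio blows up like $c^{1-m_i}$ as $c\to0$. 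The hypotheses you invoke to secure uniformity do not exclude this degeneration: entropy monotonicity and the conservation laws $\mathbb Q\,\overline u=\mathbb Q\,u_\infty$ control the averages from above and fix certain linear combinations, but they do not give a positive \emph{lower} bound on the individual $\overline{u_i}$ at all times. The condition $m_i>(d-2)_+/d$ is what validates Lemma \ref{Generalised-LSI}, which carries the prefactor $\overline{u}^{\,m_i-1}$ and hence degenerates for $m_i>1$ as $\overline{u}\to0$; it does not secure the uniformity you need. (For $(d-2)_+/d<m_i\le1$ your linear inequality does hold, since then $\overline{u}^{\,m_i-1}$ is bounded below by $M_1^{m_i-1}$ using only the upper bound $\overline u\le M_1$.)

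The paper's proof is structured precisely to circumvent this obstruction: it first proves only the weaker inequality $\mathcal D[u]\ge C(\mathcal E[u|u_\infty])^{\alpha}$ with $\alpha=\max_i\{1,m_i\}$ (Lemma \ref{algebraic}, using the $L^m$-log-Sobolev inequality of Lemma \ref{Lp-LSI}, whose power $m$ on the entropy is forced by the scaling above), deduces algebraic decay of the entropy via a non-linear Gronwall argument in integrated form, and then uses Csisz\'ar--Kullback--Pinsker to produce an \emph{explicit} time $T_0$ after which $\overline{u_i}(t)\ge\tfrac12 u_{i,\infty}$ for all $i$. Only for $t\ge T_0$ does the lower bound on the averages compensate the degeneracy in Lemma \ref{Generalised-LSI} and yield the linear inequality $\mathcal D[u(t)]\ge K\,\mathcal E[u(t)|u_\infty]$, hence exponential decay from $T_0$ onwards and, since $T_0$ is explicit, for all $t\ge0$. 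Your proposal is missing this two-stage mechanism; as written, the Gronwall step rests on an inequality that fails in the porous-medium regime the theorem is meant to cover.
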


	We emphasise that the convergence to equilibrium in Theorem \ref{thm2} is proved for {\it all renormalised solutions}, not only the ones obtained via an approximation procedure. This is possible because the proof of the convergence uses only the weak entropy--entropy dissipation law \eqref{entropy_law} and the conservation laws \eqref{conservation_law} which are satisfied by all renormalised solutions. It is noted that, when a system possesses boundary equilibria, the convergence to the positive equilibrium (more precisely, the instability of boundary equilibria) is very subtle and strongly connected to the famous \textit{Global Attractor Conjecture}, see for instance \cite{DFT17,FT18,craciun2015toric}. 

	\medskip
	Theorem \ref{thm2}, up to our knowledge, is the first result of trend to equilibrium for general complex balanced reaction networks with non-linear diffusion. A special case of a single reversible reaction was considered recently in \cite{FLT20} in which the authors utilised a so-called {\it indirect diffusion effect} (see e.g.\ \cite{einav2020indirect}), which seems difficult to be generalised to general systems. The proof of Theorem \ref{thm2} uses the relative entropy
	\begin{equation}\label{re}
		\E[u|u_\infty] = \sumi \int_{\Omega}\bra{u_i\log\frac{u_i}{u_{i,\infty}} - u_i+u_{i,\infty}}dx
	\end{equation}
	and its corresponding entropy dissipation
	\begin{equation}\label{ed}
		\D[u] = \sumi d_im_i\int_{\Omega}u_i^{m_i-2}|\na u_i|^2 \, dx + \sum_{r=1}^{R}k_ru_{\infty}^{y_r}\int_{\Omega}\Psi\bra{\frac{u^{y_r}}{u_{\infty}^{y_r}}; \frac{u^{y_r'}}{u_{\infty}^{y_r'}}}dx
	\end{equation}
	where $\Psi(x;y) = x\log(x/y) - x + y$. It holds, at least formally, that
	\begin{equation*}
		-\frac{d}{dt}\E[u|u_\infty] = \D[u]
	\end{equation*}
	along any trajectory of \eqref{mass-action-system}--\eqref{reaction-term}. The degeneracy of the non-linear diffusion makes the classical logarithmic Sobolev inequality not applicable. Our main idea here is to utilise some generalised logarithmic Sobolev inequalities (see Lemmas \ref{Lp-LSI} and \ref{Generalised-LSI}), which are suited for non-linear diffusion, and the established results in \cite{FT18} for the case of linear diffusion, to firstly derive an entropy--entropy dissipation inequality of the form
	\begin{equation}\label{ee1}
		\D[u]\geq C(\E[u|u_\infty])^{\alpha}
	\end{equation}
	where $\alpha = \max_{i=1,\ldots, I}\{1,m_i\}$. Note that this functional inequality is proved for all non-negative functions $u:\Omega \to \mathbb R_+^I$ satisfying the conservation laws $\mathbb Q\int_{\Omega}u \, dx = |\Omega|\mathbb Q u_\infty$, and therefore is suitable for renormalised solutions, which have very low regularity. If $\alpha = 1$ in \eqref{ee1}, one immediately gets exponential convergence of the relative entropy to zero and, consequently, exponential convergence of the solutions to equilibrium in $L^1$ thanks to a Csisz\'ar--Kullback--Pinsker type inequality. If $\alpha > 1$, we first obtain an algebraic decay of the relative entropy to zero. Thanks to this, we can explicitly compute a finite time $T_0>0$ from which onwards (in time) the averages of concentrations are strictly bounded below by a positive constant. This helps to compensate the degeneracy of the diffusion and, therefore, to show that solutions with such lower bounds satisfy the linear entropy--entropy dissipation inequality, i.e.
	\begin{equation*}
		\D[u(t)] \geq C\E[u(t)|u_\infty] \quad \text{ for all } \quad t\geq T_0,
	\end{equation*}
	which recovers exponential convergence to equilibrium.
	
	\medskip
	The use of renormalised solutions allows to deal with a large class of non-linearities, but on the other hand it restricts Theorem \ref{thm2} to the case $\frac{(d-2)_+}{d} < m_i < 2$. When the non-linearities are of polynomial type, it was shown in \cite{LP17,FLT20}, under the assumption of the mass dissipation \eqref{mass-inequality} instead of the entropy inequality \eqref{entropy-inequality}, that one can get global weak or even bounded solutions if the porous medium exponents $m_i$ are large enough. In the following theorem, we show an analog result under the entropy inequality condition \eqref{entropy-inequality}. Moreover, the solution is shown to converge exponentially to the positive complex balanced equilibrium.
	\begin{theorem}[Existence and convergence of weak and bounded solutions]\label{thm3}
		Let $\Omega\subset \mathbb R^d$ be a bounded domain with Lipschitz boundary $\partial\Omega$. Assume that the assumptions \eqref{F1}--\eqref{F3} hold, and the non-linearities are bounded by polynomials, i.e.\ there exist $C>0$ and $\mu \geq 1$ such that
		\begin{equation*}
			|f_i(u)| \leq C\bra{1+|u|^{\mu}} \quad \text{ for all } \quad u\in \mathbb R^I, \ i=1,\ldots, I.
		\end{equation*}
		\begin{itemize}
		\item If
		\begin{equation}\label{high_mi}
			m_i \geq \max\{\mu-1, 1\} \quad \text{ for all } \quad i=1,\ldots, I,
		\end{equation}
		then the system \eqref{e0} has a global non-negative weak solution for each non-negative initial data $u_0 = (u_{i,0})_{i=1,\ldots, I}$ such that $u_{i,0}\log u_{i,0} \in L^2(\Omega)$ for all $i=1,\ldots, I$. 
		\item Under an even stronger condition on the porous medium exponents, namely
		\begin{align}\label{stronger_porous}
		\begin{split}
		\min_{i=1,\ldots, I} m_i &> \max\{\mu-1, 1\} \quad \text{ for } \quad d \le 2, \\ 
		\min_{i=1,\ldots, I} m_i &> \mu - \frac{4}{d+2} \quad \text{ for } \quad d\geq 3,
		\end{split}
		\end{align}
		the weak solution to \eqref{e0} subject to bounded non-negative initial data $u_0\in L^\infty(\Omega)^I$ is locally bounded in time, i.e.
		\begin{equation*}
		\|u_i\|_{L^\infty(\Omega \times(0,T))} \leq C_T \quad \text{ for all } \quad i=1,\ldots, I,
		\end{equation*}
		where $C_T$ is a constant growing at most polynomially in $T>0$.
		\end{itemize}
		\medskip
		Assume now that the reaction network \eqref{reaction2} is complex balanced and define
		\begin{equation}\label{mu}
			\mu = \max_{r=1,\ldots, R}\{|y_r|, |y_r'|\}
		\end{equation}
		where $|y_{r}| = \sum_{i=1}^Iy_{r,i}$. Assume moreover that \eqref{reaction2} has no boundary equilibria. 
		\begin{itemize}
			\item
		If \eqref{high_mi} holds and $0\leq u_0$ such that $u_{i,0}\log{u_{i,0}} \in L^2(\Omega)$ for all $i=1,\ldots, I$, all weak solutions to \eqref{mass-action-system}--\eqref{reaction-term} converge exponentially to the complex balanced equilibrium in $L^1(\Omega)$, i.e.
		\begin{equation*}
			\sumi \|u_i(t) - u_{i,\infty}\|_{L^1(\Omega)} \leq Ce^{-\lambda t} \quad \text{ for all } \quad t \geq 0,
		\end{equation*}
		for some $C, \lambda > 0$. 
		
		\item If \eqref{stronger_porous} holds and $0\leq u_0\in L^\infty(\Omega)^I$, weak solutions to \eqref{mass-action-system}--\eqref{reaction-term} are bounded locally in time and converge exponentially to equilibrium in any $L^p$ norm with $p \in [1,\infty)$, i.e.
		\begin{equation}\label{Lp-convergence}
			\sum_{i=1}^I\|u_i(t) - u_{i,\infty}\|_{L^p(\Omega)} \leq C_pe^{-\lambda_p t} \quad \text{ for all } \quad t \geq 0,
		\end{equation}
		with positive constants $C_p, \lambda_p>0$.
		\end{itemize}
	\end{theorem}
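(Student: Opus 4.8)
The plan is to treat the four assertions in sequence, building the two convergence statements on top of the existence and regularity statements and then importing the entropy method of Theorem~\ref{thm2}. For the existence of weak solutions under \eqref{high_mi} I would start from a regularised system (truncating the reaction to a bounded $f_i^{\eps}$ and adding a vanishing non-degenerate viscosity) admitting smooth non-negative approximations $\ue$. Assumption \eqref{F3} survives the approximation and yields, through the entropy law \eqref{entropy_law}, the uniform bounds $\sup_t\sumi\int_\Omega u_i^{\eps}\log u_i^{\eps}\,dx\le C$, $\sumi\int_{Q_T}|\na (u_i^{\eps})^{m_i/2}|^2\,dx\,dt\le C$, together with boundedness of the dissipation functional $\sumi\int_{Q_T}f_i(\ue)(\log u_i^{\eps}+\mu_i)\,dx\,dt$. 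Interpolating $(u_i^{\eps})^{m_i/2}\in L^\infty(0,T;L^{2/m_i})\cap L^2(0,T;H^1)$ via Gagliardo--Nirenberg gives the parabolic gain $u_i^{\eps}\in L^{m_i+2/d}(Q_T)$ uniformly in $\eps$. The condition \eqref{high_mi} is what renders the polynomial reaction subordinate to these bounds: with $|f_i(u)|\le C\bra{1+|u|^\mu}$ and $m_i\ge\mu-1$, the integrability $u_i^{\eps}\in L^{m_i+2/d}(Q_T)$ handles moderate values of $f_i(\ue)$, while the boundedness of the entropic dissipation functional supplies, through a de~la~Vall\'ee--Poussin criterion (the super-linear $\log$ weight defeating the polynomial growth), the equi-integrability of $f_i(\ue)$ in $L^1(Q_T)$. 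A time-derivative bound on $(u_i^{\eps})^{m_i/2}$ read off from the equation then feeds the Aubin--Lions lemma, giving strong $L^2(Q_T)$ — hence a.e.\ — convergence of a subsequence, and Vitali's theorem passes to the limit in $f_i$, producing the weak solution.

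\textbf{Local-in-time boundedness under \eqref{stronger_porous}.} For bounded initial data I would run an Alikakos/Moser iteration: testing the $i$-th equation with $u_i^{p-1}$ produces $\tfrac1p\tfrac{d}{dt}\int_\Omega u_i^p+c\int_\Omega|\na u_i^{(m_i+p-1)/2}|^2\le C\int_\Omega|u|^{\mu+p-1}$, and the parabolic Sobolev (Gagliardo--Nirenberg) inequality converts the dissipation into a gain of integrability. The strict condition \eqref{stronger_porous}, which for $d\ge 3$ reads $\min_i m_i>\mu-\tfrac{4}{d+2}$, keeps the reaction exponent below the parabolic-Sobolev critical exponent $m_i+\tfrac4{d+2}$, leaving room to absorb the reaction and to close the iteration as $p\to\infty$; the resulting $L^\infty$ bound grows at most polynomially in $T$. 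At $d=2$ this threshold degenerates to $\mu<\min_i m_i+1$, consistent with the stated condition.

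\textbf{Exponential convergence.} Because weak and bounded solutions are more regular than renormalised ones, they satisfy the weak entropy--entropy dissipation law \eqref{entropy_law} and the conservation laws \eqref{conservation_law}; moreover $m_i\ge 1>\tfrac{(d-2)_+}{d}$ under \eqref{high_mi}, so the lower-exponent hypothesis needed for equilibration holds automatically. I would therefore invoke the finite-dimensional entropy--entropy dissipation inequality \eqref{ee1}, $\D[u]\ge C\,\E[u|u_\infty]^{\alpha}$ with $\alpha=\max_i\{1,m_i\}$, established via the generalised logarithmic Sobolev inequalities (Lemmas~\ref{Lp-LSI}, \ref{Generalised-LSI}): it is a functional inequality valid for every $u$ in the conservation class $\mathbb Q\int_\Omega u\,dx=|\Omega|\mathbb Q u_\infty$, independent of the solution concept and not requiring $m_i<2$. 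The two-phase argument of Theorem~\ref{thm2} then applies verbatim: when $\alpha>1$ one first extracts algebraic decay of $\E[u|u_\infty]$, deduces strictly positive lower bounds for the spatial averages from some $T_0$ onward, upgrades to the linear inequality $\D[u]\ge C\E[u|u_\infty]$, and concludes exponential decay of the relative entropy; a Csisz\'ar--Kullback--Pinsker inequality converts this into the asserted exponential $L^1(\Omega)$ convergence.

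\textbf{From $L^1$ to $L^p$, and the main obstacle.} For the final statement I would interpolate $\|u_i(t)-u_{i,\infty}\|_{L^p(\Omega)}\le\|u_i(t)-u_{i,\infty}\|_{L^1(\Omega)}^{1/p}\|u_i(t)-u_{i,\infty}\|_{L^\infty(\Omega)}^{1-1/p}$; combining the exponential $L^1$ decay with the at-most-polynomial-in-$t$ $L^\infty$ bound yields, for every $\lambda_p<\lambda/p$, a constant $C_p$ with $\|u_i(t)-u_{i,\infty}\|_{L^p}\le C_p e^{-\lambda_p t}$, since the exponential dominates the polynomial factor. I expect the genuine difficulty to lie entirely in the existence and regularity half: securing sufficient control of the reaction in dimensions $d\ge 3$ under the \emph{sharp} non-strict condition \eqref{high_mi} (where the entropy integrability $L^{m_i+2/d}$ alone is deficient and one must lean on the entropic dissipation to recover equi-integrability), and making the Moser iteration for degenerate porous-medium diffusion close under the strict condition \eqref{stronger_porous}. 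The convergence half is, by contrast, a direct reuse of the functional inequalities and the two-phase entropy argument already developed for Theorem~\ref{thm2}.
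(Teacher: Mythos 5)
Your convergence half is sound and coincides with the paper's argument: the entropy--entropy dissipation inequality of Lemma \ref{algebraic} is a functional inequality on the conservation class, independent of the solution concept and of the restriction $m_i<2$, so the two-phase argument of Theorem \ref{thm2} applies verbatim, and the $L^1\to L^p$ upgrade by interpolation against the polynomially growing $L^\infty$ bound is exactly what the paper does. The gap lies in the existence part. You derive from the entropy law only $u_i^\eps\log u_i^\eps\in L^\infty_t L^1_x$ and $\na (u_i^\eps)^{m_i/2}\in L^2(Q_T)$, which by Gagliardo--Nirenberg give $u_i^\eps\in L^{m_i+2/d}(Q_T)$; since \eqref{high_mi} only guarantees $\mu\le m_i+1$ and $2/d<1$ for $d\ge 3$, this is indeed deficient, and you propose to close the gap by extracting equi-integrability of $f_i(u^\eps)$ from ``the boundedness of the dissipation functional $\sum_i\int_{Q_T}f_i(u^\eps)(\log u_i^\eps+\mu_i)\,dx\,dt$''. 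This step fails: that functional is a single \emph{signed} scalar, a sum over species (and, in the mass-action case, over reactions) with massive cancellations. Assumption \eqref{F3} together with \eqref{entropy_law} bounds it in absolute value, but it does not dominate $\int_{Q_T}|f_i(u^\eps)|\,w(u^\eps)\,dx\,dt$ for any superlinear weight $w$, nor even $\int_{Q_T}|f_i(u^\eps)|$ componentwise; for instance near detailed balance the dissipation can be arbitrarily small while the individual $|f_i|$ are large. So the de la Vall\'ee--Poussin argument has no valid input, and Vitali's theorem cannot be applied to pass to the limit in the reaction terms.

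The missing ingredient is the duality (Laamri--Pierre type) estimate of Lemma \ref{add_lem0}, proved in Appendix \ref{app2}: setting $w_i^\eps=u_i^\eps\log u_i^\eps+(\mu_i-1)u_i^\eps+e^{-\mu_i}$ and $z_i^\eps=(\log u_i^\eps+\mu_i)(u_i^\eps)^{m_i}-\frac{1}{m_i}\bigl((u_i^\eps)^{m_i}-e^{-\mu_i m_i}\bigr)$, one has $\partial_t\bigl(\sum_i w_i^\eps\bigr)-\Delta\bigl(\sum_i d_i z_i^\eps\bigr)\le A+\beta\sum_i w_i^\eps$ by \eqref{entropy-inequality}, and testing this inequality against the time-integrated quantity $\int_0^t e^{-\beta s}\sum_i d_i z_i^\eps\,ds$ yields the bound \eqref{uniform-bound}, i.e.\ $\int_{Q_T}(\log u_i^\eps)^2(u_i^\eps)^{m_i+1}\,dx\,dt\le C_T$ uniformly in $\eps$. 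This $L^{m_i+1}$ bound with $(\log)^2$ weight is strictly stronger than the interpolation bound $L^{m_i+2/d}$ and is precisely what makes $\mu\le m_i+1$ the correct threshold: it gives uniform integrability of $(u_i^\eps)^{m_i}$ and of $|u^\eps|^{\mu}$, hence of $f_i^\eps(u^\eps)$, and Vitali then passes to the limit in the very weak formulation. It is also the starting point for the boundedness claim, since the resulting $\|u_i\|_{L^{m_i+1}(Q_T)}\le C_T$ is the input for the iteration of \cite[Lemma 2.2]{FLT20} under \eqref{stronger_porous}; your Moser iteration sketch is plausible but likewise needs this initial integrability to get off the ground. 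Without the duality estimate, neither bullet of the existence/regularity half closes.
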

	
%	It is remarked that the existence of global weak solutions to \eqref{mass-action-system}--\eqref{reaction-term} with condition \eqref{high_mi} was shown in \cite{LP17} under the mass dissipation assumption \eqref{mass-inequality}
%	instead of the entropy condition \eqref{entropy-inequality}. We will show in this article that---with suitable modifications---one can still obtain global weak solutions to \eqref{mass-action-system}--\eqref{reaction-term} under assumption \eqref{entropy-inequality}.
	
	\medskip
	As one can see from Theorems \ref{thm2} and \ref{thm3}, the global existence and trend to equilibrium for \eqref{mass-action-system}--\eqref{reaction-term} are well-established for either $m_i < 2$ for all $i=1,\ldots, I$, or $m_i$ large enough in the sense of \eqref{high_mi} or \eqref{stronger_porous} for all $i=1,\ldots, I$. There exists, therefore, a gap in which the global existence of any kind of solution remains open.  Remarkably, the proof of the convergence to equilibrium in this paper does not rely on the restriction $m_i < 2$, and it therefore is applicable to any global solution to \eqref{mass-action-system}--\eqref{reaction-term} as long as it satisfies the entropy law \eqref{entropy_law} and the mass conservation laws \eqref{conservation_law}. We, thus, arrive at the following result on the convergence to equilibrium for approximating systems of \eqref{mass-action-system}--\eqref{reaction-term} uniformly in the approximation parameter.
	
	\begin{theorem}[Uniform convergence of approximate solutions]\label{thm4}
		Let $\Omega\subset \mathbb R^d$ be a bounded domain with Lipschitz boundary $\partial\Omega$. Assume that \eqref{mass-action-system}--\eqref{reaction-term} subject to \eqref{F1}--\eqref{F3}, non-negative initial data $u_0 = (u_{i,0})_i \in L^1(\Omega)^I$ with $\sum_{i=1}^I\int_{\Omega}u_{i,0}\log u_{i,0} \, dx < \infty$, and $m_i > \frac{(d-2)_+}{d}$ for all $i = 1, \dotsc, I$ admits a complex balanced equilibrium but no boundary equilibria. For any $\varepsilon>0$, let $u^\varepsilon = (\ue_i)_{i=1,\ldots, I}$ be the solution to the approximate system
		\begin{equation}\label{approx}
		\begin{cases}
		\begin{aligned}
			\partial_tu_{i}^{\varepsilon} - d_i\Delta (u_i^\varepsilon)^{m_i} &= \frac{f_i(u^\varepsilon)}{1+\varepsilon|f(u^\varepsilon)|} &&\text{ in } Q_T,\\
			\nabla (u_i^\varepsilon)^{m_i}\cdot \nu &= 0 &&\text{ on } \Gamma_T,\\
			u_{i}^\varepsilon(x,0) &= u_{i,0}^{\eps}(x) &&\text{ in } \Omega,
		\end{aligned}
		\end{cases}
		\end{equation}
		where $u_{i,0}^\eps \in L^\infty(\Omega)$ satisfies 
		\begin{equation}\label{assump1}
			\max_{i}\lim_{\eps\to 0}\|u_{i,0}^{\eps} - u_{i,0}\|_{L^1(\Omega)} = 0 \quad \text{ and } \quad \max_{i}\sup_{\eps>0}\int_{\Omega}u_{i,0}^\eps \log u_{i,0}^\eps \, dx < \infty.
		\end{equation}
		Moreover, the approximated initial data $u_0^\eps$ are chosen such that, for all $\eps>0$,
		\begin{equation}\label{conservation_law_approx}
			\mathbb Q \int_{\Omega}u_0^{\eps} \, dx = \mathbb Q\int_{\Omega} u_0 \, dx.
		\end{equation}
		(This implies that the system \eqref{approx} admits a unique positive complex balanced equilibrium $u_\infty \in \mathbb R_+^I$, which is independent of $\eps>0$.) 
		
		\medskip
		Then, $u^\eps$ converges exponentially to $u_\infty$ with a rate which is {\normalfont uniform} in $\varepsilon$, i.e.
		\begin{equation*}
			\sum_{i=1}^{I}\|u_{i}^{\eps}(t) - u_{i,\infty}\|_{L^1(\Omega)} \leq Ce^{-\lambda t} \quad \text{ for all } \quad t \geq 0,
		\end{equation*}
		where $C, \lambda>0$ are constants {\normalfont independent} of $\eps>0$.
	\end{theorem}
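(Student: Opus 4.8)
The plan is to reduce the uniform exponential convergence of the approximate solutions $u^\eps$ to an application of the entropy--entropy dissipation machinery that underlies Theorem \ref{thm2}, but now applied \emph{uniformly in $\eps$}. The crucial observation, stated in the excerpt, is that the convergence proof only uses the weak entropy law \eqref{entropy_law} and the mass conservation laws \eqref{conservation_law}, together with the functional entropy--entropy dissipation inequality \eqref{ee1}, namely $\D[u] \geq C(\E[u|u_\infty])^\alpha$ with $\alpha = \max_i\{1,m_i\}$. This inequality is purely static: it holds for \emph{every} non-negative $u\colon\Omega\to\mathbb R_+^I$ obeying $\mathbb Q\int_\Omega u\,dx = |\Omega|\,\mathbb Q u_\infty$, with a constant $C$ depending only on $\Omega$, the network data, and $u_\infty$ --- none of which depend on $\eps$. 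Since \eqref{conservation_law_approx} guarantees that each $u^\eps(t)$ lies in the same compatibility class with the same $\eps$-independent equilibrium $u_\infty$, the constant $C$ in \eqref{ee1} is automatically uniform in $\eps$.

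The steps I would carry out are as follows. First I would establish the $\eps$-uniform entropy dissipation law for the approximate system \eqref{approx}: testing (or, for the approximate problem, using the regularity of $u^\eps$ so this is rigorous) yields
\begin{equation*}
	\frac{d}{dt}\E[u^\eps|u_\infty] \leq -\D_\eps[u^\eps],
\end{equation*}
where $\D_\eps$ is the entropy dissipation of the regularised reaction term $f_i(u^\eps)/(1+\eps|f(u^\eps)|)$. The key sub-step is to verify that the regularised reaction dissipation still dominates the unregularised one up to an $\eps$-independent factor, i.e.\ that $\D_\eps[u^\eps] \geq c\,\D[u^\eps]$ uniformly in $\eps$, or more directly that $\D_\eps[u^\eps] \geq C(\E[u^\eps|u_\infty])^\alpha$ with the same $\eps$-independent $C$. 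This uses that the factor $(1+\eps|f(u^\eps)|)^{-1}$ preserves the sign structure of the reaction dissipation and that the generalised logarithmic Sobolev inequalities of Lemmas \ref{Lp-LSI} and \ref{Generalised-LSI}, which control the diffusion part $\sum_i d_im_i\int_\Omega u_i^{m_i-2}|\nabla u_i|^2\,dx$, carry the bulk of the estimate and are entirely unaffected by the reaction regularisation. Here the condition $m_i > (d-2)_+/d$ is exactly what makes those Sobolev inequalities applicable.

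Second, I would run the two-regime ODE argument for $E^\eps(t)\coleq\E[u^\eps(t)|u_\infty]$ exactly as sketched after \eqref{ee1}, but tracking $\eps$-independence at every turn. If $\alpha=1$, Grönwall's inequality gives $E^\eps(t)\leq E^\eps(0)e^{-Ct}$; since \eqref{assump1} bounds $E^\eps(0)$ uniformly in $\eps$ (the $u_{i,0}^\eps\log u_{i,0}^\eps$ integrals are uniformly bounded), this is immediately uniform. If $\alpha>1$, the differential inequality $\dot E^\eps\leq -C(E^\eps)^\alpha$ yields an algebraic decay $E^\eps(t)\leq (E^\eps(0)^{1-\alpha}+C(\alpha-1)t)^{-1/(\alpha-1)}$, again with $\eps$-uniform constants. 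From this algebraic decay one extracts, uniformly in $\eps$, a finite time $T_0$ after which the spatial averages $\fint_\Omega u_i^\eps\,dx$ are bounded below by a positive constant; the point is that $T_0$ depends only on the uniform bound on $E^\eps(0)$ and on the mass vector $M$, hence is $\eps$-independent. These lower bounds compensate the diffusion degeneracy and upgrade \eqref{ee1} to the \emph{linear} inequality $\D_\eps[u^\eps(t)]\geq C\,E^\eps(t)$ for $t\geq T_0$, restoring exponential decay of $E^\eps$ on $[T_0,\infty)$; combined with boundedness on $[0,T_0]$ this gives $E^\eps(t)\leq Ce^{-\lambda t}$ with $C,\lambda$ independent of $\eps$. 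Finally I would convert entropy decay into $L^1$ decay via the Csisz\'ar--Kullback--Pinsker inequality $\sum_i\|u_i^\eps(t)-u_{i,\infty}\|_{L^1(\Omega)}^2 \leq C\,\E[u^\eps(t)|u_\infty]$, whose constant again depends only on $\Omega$ and $u_\infty$, yielding the claimed $\eps$-uniform exponential rate.

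The main obstacle I anticipate is the second sub-step of the first stage: showing that the reaction dissipation of the \emph{regularised} system still controls the relative entropy with an $\eps$-independent constant. One must argue that the damping factor $(1+\eps|f(u^\eps)|)^{-1}$ does not spoil the complex-balance structure used to derive \eqref{ee1} --- in particular that the approximate reaction term still points, up to this positive scalar factor, in the same direction so that its contracted entropy production $\sum_i \partial_i(\log(u^\eps/u_\infty))\cdot f_i^\eps$ retains the $\Psi$-type lower bound. The cleanest route is probably to rely on the diffusion part alone for the decay whenever possible, so that the potentially problematic reaction dissipation need only be shown to be \emph{non-negative} (which follows from quasi-positivity \eqref{F2} and the sign structure surviving the scaling), thereby decoupling the $\eps$-uniformity of the rate from any delicate control of the regularised reactions. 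A careful handling of the interplay between the algebraically-decaying transient regime and the extraction of the $\eps$-uniform time $T_0$ is the other place where constants must be tracked with care.
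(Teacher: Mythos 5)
Your overall architecture (entropy law, the two-regime ODE argument with an $\eps$-uniform $T_0$, Csisz\'ar--Kullback--Pinsker at the end) matches the paper, but the step you yourself flag as the main obstacle is a genuine gap, and neither of the two resolutions you propose for it works. First, the inequality $\D_\eps[u^\eps]\geq c\,\D[u^\eps]$ with $c$ independent of $\eps$ would require the damping factor $(1+\eps|f(u^\eps)|)^{-1}$ to be bounded below uniformly in $\eps$, hence a uniform-in-$\eps$ pointwise bound on $f(u^\eps)$. No such bound is available: the $L^\infty$ bound on the approximate solutions degenerates as $\eps\to 0$ (the truncation only gives $|f^\eps_i|\leq \eps^{-1}$), and the paper explicitly states that for this reason $\D[u^\eps]$ \emph{cannot} be used as a lower bound for $\D_\eps[u^\eps]$. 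Second, your fallback of ``relying on the diffusion part alone'' and only requiring the regularised reaction dissipation to be non-negative cannot close the argument either: the diffusion dissipation, via the logarithmic Sobolev inequalities, only controls $\sum_i\int_\Omega u_i^\eps\log(u_i^\eps/\ol{u_i^\eps})\,dx$, i.e.\ the entropy relative to the \emph{spatial averages}. It gives no information about whether those averages are close to $u_{i,\infty}$; driving the averages to the complex balanced equilibrium is exactly what the reaction dissipation is for, so a quantitative $\eps$-uniform lower bound on the regularised reaction term is unavoidable.

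The paper's actual device (its Lemma~\ref{lem2}) is different and is the heart of the proof: writing $\sqrt{u_i^\eps}=\ol{\sqrt{u_i^\eps}}+\delta_i$, one splits $\Omega=\Omega_1\cup\Omega_2$ according to whether all $|\delta_i|\leq 1$. On $\Omega_1$ the solution is \emph{pointwise} bounded by $(\sqrt{L_0}+1)^2$ with $L_0$ the $\eps$-uniform $L^1$ bound, so there the factor $(1+\eps|f(u^\eps)|)^{-1}$ \emph{is} bounded below uniformly and the reaction dissipation can be Taylor-expanded around the averages; on $\Omega_2$ the terms $\|\delta_i\|_{L^2}^2$, which are controlled by the diffusion dissipation, dominate. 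The net effect is a lower bound on $\D[u^\eps]$ by an expression involving only the spatial averages $\ol{\sqrt{u_i^\eps}}$, which are uniformly bounded, and from there the finite-dimensional estimates of \cite{FT18} yield $\D[u^\eps]\geq C\,(\E[u^\eps|u_\infty])^\alpha$ with $C$ independent of $\eps$. Without this (or an equivalent) reduction to spatial averages, your proof does not go through.
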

	To prove Theorem \ref{thm4}, we make use of the same relative entropy as in \eqref{re},
	\begin{equation*}
	\E[u^\eps|u_\infty] = \sum_{i=1}^{I}\int_{\Omega}\bra{u_i^\eps\log\frac{u_i^\eps}{u_{i,\infty}} - u_i^\eps + u_{i,\infty}}dx,
	\end{equation*}
	where $u^\eps$ is the solution to \eqref{approx}. The corresponding non-negative entropy dissipation can be calculated as
	\begin{equation*}
	\D[u^\eps] = \sum_{i=1}^{I}d_i \int_{\Omega}m_i(u_i^\eps)^{m_i-2}|\nabla u_i|^2 \, dx + \sum_{r=1}^Rk_ru_\infty^{y_r} \int_{\Omega}\frac{1}{1+\eps|f(u^\eps)|}\Psi\bigg( \frac{(u^\eps)^{y_r}}{u_\infty^{y_r}}; \frac{(u^\eps)^{y_r'}}{u_\infty^{y_r'}} \bigg) dx.
	\end{equation*}
	Since there is no uniform-in-$\eps$ $L^\infty$ bound for $u^\eps$ available, the factor $\frac{1}{1+\eps|f(\ue)|}$ is not bounded below uniformly in $\eps>0$. Therefore, it seems to be impossible to use the entropy dissipation (for limit solutions) in \eqref{ed} as a lower bound for $\D[\ue]$. We overcome this issue by using the ideas in \cite{FT18}. Roughly speaking, we deal with the second sum in $\D[\ue]$ by estimating it below by a term involving only spatial averages of $\ue$ (rather than $\ue$ pointwise as above), and then exploiting the fact that these averages are bounded uniformly in $\eps>0$.

	\medskip
	We summarise the global existence and convergence to equilibrium for the mass action system \eqref{mass-action-system}--\eqref{reaction-term} in Table \ref{tablesummarisation} (assuming the existence of a unique strictly positive complex balanced equilibrium and the absence of boundary equilibria). Note that the global existence of any kind of solution remains open for diffusion exponents satisfying $2 \leq m_i < \mu - 1$, where $\mu$ is defined in \eqref{mu}.
	
	\begin{table}[h]
	\begin{tabular}{| l || l | l |}
		\hline
		\textbf{Diffusion exponents} & \textbf{Global existence} & \textbf{Convergence to \hfill \linebreak equilibrium} \\
		\hline\hline
		$m_i \leq \frac{(d-2)_+}{d}$ & Renormalised solution & unknown \\
		\hline
		$\frac{(d-2)_+}{d} < m_i < 2$ & Renormalised solution & Exponential in $L^1$ \\
		\hline
		$2 \leq m_i < \mu - 1 \vphantom{\displaystyle \sum_A^B}$ & %\cellcolor{lightgray} 
		{unknown} & \parbox{.35\textwidth}{\smallskip Exponential in $L^1$ for approximate solutions {\it uniformly in the approximation parameter}\smallskip} \\
		\hline
		$\mu - 1 \leq m_i \leq \mu - \frac{4}{d+2}$ & Weak solution & Exponential in $L^1$ \\
		\hline
		$\mu - \frac{4}{d+2} < m_i$ & Bounded solution & Exponential in any $L^p$, $p < \infty$ \\
		\hline
	\end{tabular}

	\begin{tabular}{c}
		\hfill
	\end{tabular}
	\caption{Summarisation of global existence and convergence to equilibrium for mass action reaction--diffusion systems \eqref{mass-action-system}--\eqref{reaction-term} with non-linear diffusion in various parameter regimes.}
	\label{tablesummarisation}
	\end{table}
	
	{The rest of this paper is organised as follows.} In Section \ref{sec:existence}, we show the global existence of renormalised solutions for the general system \eqref{e0}. The proofs of Theorems \ref{thm2}, \ref{thm3}, and \ref{thm4} on the convergence to equilibrium of chemical reaction networks are presented in Section \ref{sec:convergence}. Some technical proofs of auxiliary results are postponed to the Appendix.  %The existence of solutions to approximate systems is given in Appendix \ref{appendix}.

\section{Global existence of renormalised solutions}\label{sec:existence}

	\subsection{Existence of approximate solutions}\label{sec:approx}
	Let $\varepsilon>0$ and consider the following approximating system for $u^{\varepsilon} = (u_1^{\varepsilon}, \ldots, u_I^{\varepsilon})$,
	\begin{equation}\label{approx-system}
			\partial_tu_i^{\varepsilon} - d_i\Delta (u_i^{\varepsilon})^{m_i} = f_i^\varepsilon(u^{\varepsilon})\coleq \frac{f_i(u^{\varepsilon})}{1 + \varepsilon|f(u^{\varepsilon})|}, \quad \nabla (u_i^{\varepsilon})^{m_i}\cdot \nu = 0, \quad u_{i}^{\varepsilon}(\cdot,0) = u_{i,0}^{\varepsilon},
	\end{equation}
	where $u_{i,0}^{\varepsilon}\in L^{\infty}(\Omega)$ is non-negative and $u_{i,0}^{\varepsilon} \xrightarrow{\eps\to0} u_{i,0}$ in $L^1(\Omega)$. Moreover, we demand
	\begin{equation}\label{Ini-Log-Bound}
		\sup_{\varepsilon>0}\int_{\Omega}u_{i,0}^{\varepsilon}\log u_{i,0}^{\varepsilon} \, dx < +\infty.
	\end{equation}
	With this approximation, it is easy to check that the approximated non-linearities $f_i^\eps$ still satisfy the assumptions \eqref{F1}--\eqref{F3}.
	\begin{lemma}\label{existence-approx}
		For any $\varepsilon>0$, there exists a global weak solution $u^{\varepsilon} = (u_i^{\varepsilon})_{i} \in L^\infty_{loc}(0, \infty; L^\infty(\Omega))^I$ to the approximate system \eqref{approx-system} with $\na (\ue_i)^{\frac{m_i}{2}} \in L^2_{loc}(0,\infty;L^2(\Omega))$ and $\partial_tu_i^\varepsilon \in L^2_{loc}(0, \infty; (H^1(\Omega))')$.  %$\{u_i^{\varepsilon}\log u_i^{\varepsilon}\}$ is bounded uniformly (w.r.t.\ $\varepsilon$) in $L^{\infty}(0,T; L^1(\Omega))$ and $|\nabla(u_i^{\varepsilon})^{\frac{m_i}{2}}|$ is bounded uniformly (w.r.t.\ $\varepsilon$) in $L^2(0,T;L^2(\Omega))$.
		In detail, 
		\begin{align}
		\begin{aligned} \label{regulweak}
			\int_{t_0}^{t_1} \bigg\langle \frac{d}{dt} u_i^\varepsilon, \psi \bigg\rangle_{(H^1(\Omega))', H^1(\Omega)} dt = &- d_i m_i \int_{t_0}^{t_1} \int_\Omega (u_i^\varepsilon)^{m_i - 1} \nabla u_i^\varepsilon \cdot \nabla \psi \, dx \, dt \\
			&+ \int_{t_0}^{t_1} \int_\Omega \frac{f_i(u^\varepsilon)}{1 + \varepsilon |f(u^\varepsilon)|} \psi \, dx \, dt
		\end{aligned}
		\end{align}
		for all $\psi \in L^2_\mathrm{loc}(0,\infty; H^1(\Omega))$, $i \in \{1,\dotsc,I\}$, and a.e.\ $0 \leq t_0 < t_1$.
		Moreover, each $u_i^\varepsilon$ is a.e.\ non-negative and
		\begin{equation}\label{a0}
			\begin{gathered}
				\sum_{i=1}^{I}\int_{\Omega}u_i^{\varepsilon}(t_1) [\log{u_i^{\varepsilon}(t_1)} + \mu_i - 1] \, dx + C\int_{t_0}^{t_1}\int_{\Omega}\sum_{i=1}^{I}\left|\nabla (u_i^{\varepsilon})^{\frac{m_i}{2}}\right|^2 \, dx \, dt \\
				\leq e^{C(t_1 - t_0)}\left(\int_{\Omega}\sum_{i=1}^{I}u_i^{\varepsilon}(t_0) [\log u_i^{\varepsilon}(t_0) + \mu_i - 1] \, dx + C(t_1 - t_0)\right)
			\end{gathered}
		\end{equation}
		for a.e.\ $0 \leq t_0 < t_1$.
	\end{lemma}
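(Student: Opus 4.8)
The plan is to prove Lemma \ref{existence-approx} by regularising the degenerate diffusion, establishing existence for the non-degenerate system via standard parabolic theory, deriving the entropy estimate \eqref{a0} uniformly in the new regularisation parameter, and passing to the limit. First I would replace $\Delta(u_i)^{m_i}$ with $\operatorname{div}(D_i^\delta(u_i)\nabla u_i)$ where $D_i^\delta(s) = m_i(s+\delta)^{m_i-1}$ for a parameter $\delta > 0$, which is uniformly elliptic (bounded above and below on bounded sets). Note that the reaction term $f_i^\eps = f_i/(1+\eps|f|)$ is globally bounded by $1/\eps$ and globally Lipschitz (since $f_i$ is locally Lipschitz and the denominator tames growth), and it retains quasi-positivity \eqref{F2} because the scalar factor $(1+\eps|f|)^{-1} > 0$. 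Thus for fixed $\eps, \delta > 0$ the system is a semilinear parabolic system with bounded Lipschitz right-hand side, and classical results (e.g. Amann's theory or a Galerkin/fixed-point scheme combined with $L^\infty$ bounds from the boundedness of $f_i^\eps$) yield a global weak solution $u_i^{\eps,\delta}$ in the regularity class stated, with the weak formulation \eqref{regulweak} holding with $(u_i)^{m_i-1}$ replaced by $D_i^\delta(u_i)$. Non-negativity follows from quasi-positivity: testing the equation for $u_i^{\eps,\delta}$ against $(u_i^{\eps,\delta})_-$ and using $f_i^\eps \geq 0$ on $\{u_i = 0\}$ gives a Gronwall argument forcing the negative part to vanish.

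\medskip
The heart of the matter is the entropy estimate \eqref{a0}, which I would derive first at the regularised level $(\eps,\delta)$ and which must be uniform in both parameters. The natural test function in the weak formulation is $\psi = \log u_i^{\eps,\delta} + \mu_i$; since $u_i^{\eps,\delta}$ is bounded and, after the diffusion regularisation, bounded below by a positive constant on finite time intervals (or else one works with $\log(u_i^{\eps,\delta} + \delta)$ and removes the shift in the limit), this is admissible. The diffusion term produces
\begin{equation*}
	\sumi d_i \int_\Omega D_i^\delta(u_i^{\eps,\delta}) \nabla u_i^{\eps,\delta} \cdot \nabla \log u_i^{\eps,\delta} \, dx = \sumi d_i m_i \int_\Omega \frac{(u_i^{\eps,\delta}+\delta)^{m_i-1}}{u_i^{\eps,\delta}} |\nabla u_i^{\eps,\delta}|^2 \, dx,
\end{equation*}
which dominates $\sumi \frac{4d_i}{m_i} \int_\Omega |\nabla (u_i^{\eps,\delta})^{m_i/2}|^2 \, dx$ up to a constant after sending $\delta \to 0$ (using $(u+\delta)^{m_i-1} \geq u^{m_i-1}$ when $m_i \leq 1$, and a comparison when $m_i > 1$). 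The reaction term, after integrating, is exactly $\sumi \int_\Omega f_i^\eps(u)(\log u_i + \mu_i)\,dx$, and here I would invoke the entropy inequality \eqref{F3} — which, as the excerpt notes, is inherited by $f_i^\eps$ — to bound it above by $C\sumi\int_\Omega(1 + u_i^{\eps,\delta}\log u_i^{\eps,\delta})\,dx$. Recognising $\frac{d}{dt}\sumi\int_\Omega u_i(\log u_i + \mu_i - 1)\,dx$ as the time derivative of the left side, I obtain a differential inequality of Gronwall type, whose integrated form is precisely \eqref{a0}.

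\medskip
\textbf{The main obstacle} will be passing to the limit $\delta \to 0$ to recover the genuinely degenerate diffusion while preserving both the weak formulation \eqref{regulweak} and the entropy bound \eqref{a0}. The uniform estimate \eqref{a0} gives $(u_i^{\eps,\delta})^{m_i/2}$ bounded in $L^2_{loc}(0,\infty;H^1(\Omega))$ and $u_i^{\eps,\delta}$ bounded in $L^\infty_{loc}(0,\infty;L^1(\Omega))$ with controlled entropy; combined with the boundedness of $\partial_t u_i^{\eps,\delta}$ in $L^2_{loc}(0,\infty;(H^1(\Omega))')$, an Aubin--Lions argument yields strong $L^2$ (or a.e.) convergence of $u_i^{\eps,\delta}$ along a subsequence. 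The delicate point is identifying the limit of the nonlinear flux $D_i^\delta(u_i^{\eps,\delta})\nabla u_i^{\eps,\delta} = \frac{2}{m_i}(u_i^{\eps,\delta}+\delta)^{m_i-1}(u_i^{\eps,\delta})^{1-m_i/2}\,\nabla(u_i^{\eps,\delta})^{m_i/2}$: one needs the weak $L^2$ limit of $\nabla(u_i^{\eps,\delta})^{m_i/2}$ to combine correctly with the strong limit of the algebraic prefactor, which requires care precisely where $u_i$ degenerates to zero. Since the reaction term is uniformly bounded by $1/\eps$ for fixed $\eps$, no difficulty arises there. Finally, the $L^\infty$ bound on $u_i^\eps$ claimed in the statement follows for fixed $\eps$ from the boundedness of $f_i^\eps$ together with a standard Moser/De Giorgi iteration or a comparison argument applied to the porous-medium operator with bounded source, uniformly on finite time intervals.
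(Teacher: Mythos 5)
Your overall strategy coincides with the paper's: regularise the degenerate diffusion to a non-degenerate quasilinear problem, solve it by standard means, prove non-negativity from quasi-positivity, obtain a $\delta$-uniform $L^\infty$ bound, derive the entropy estimate \eqref{a0} by testing with $\log(\cdot)+\mu_i$ and Gronwall, and pass to the limit $\delta\to 0$. The differences are in the execution. First, the paper regularises with $H^\delta(s)=\frac{m_i|s|^{m_i-1}}{1+\delta m_i|s|^{m_i-1}}+\delta$, which is bounded above and below by constants depending only on $\delta$ \emph{globally}, so a Galerkin scheme closes without any a priori $L^\infty$ information; your $D_i^\delta(s)=m_i(s+\delta)^{m_i-1}$ is only elliptic on bounded sets when $m_i>1$, so your appeal to ``classical results'' silently presupposes the $L^\infty$ bound you only establish at the very end. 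Second, and more substantively, the paper does \emph{not} use the entropy bound to drive the $\delta\to 0$ compactness: it exploits the $\delta$-uniform $L^\infty$ bound (obtained by an elementary $L^p$--Gronwall estimate and $p\to\infty$, no Moser iteration needed) to get $\|\nabla (u_i^{\varepsilon,\delta})^{(m_i+p-1)/2}\|_{L^2(Q_T)}\le C$ for all $p>1$, and then chooses $1<p<3-m_i$ (here $m_i<2$ is used) to bound $\nabla u_i^{\varepsilon,\delta}$ itself in $L^2(Q_T)$. This makes the identification of the limiting flux routine and sidesteps exactly the degeneracy-at-zero issue you flag as the main obstacle; your route through the weak limit of $\nabla(u_i^{\varepsilon,\delta})^{m_i/2}$ paired with the strong limit of the algebraic prefactor also works, but is the more delicate of the two. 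The entropy law \eqref{a0} is then derived in the paper at the $\varepsilon$-level, after the $\delta$-limit, with the shift $\log(u_i^\varepsilon+\delta)+\mu_i$ exactly as in your fallback.

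One stated inequality is wrong: for $m_i\le 1$ one has $(u+\delta)^{m_i-1}\le u^{m_i-1}$, not $\ge$ (the exponent is non-positive), so the pointwise domination of the dissipation that you invoke goes the wrong way in precisely the fast-diffusion case; it is the case $m_i>1$ where your claimed inequality holds. The conclusion survives, since $\frac{(u+\delta)^{m_i-1}}{u}\ge (u+\delta)^{m_i-2}=\frac{4}{m_i^2}\big|\nabla(u+\delta)^{m_i/2}\big|^2/|\nabla u|^2$ and one concludes by weak lower semicontinuity after identifying the weak $L^2(0,T;H^1(\Omega))$ limit of $(u^{\varepsilon,\delta}+\delta)^{m_i/2}$ with $(u^{\varepsilon})^{m_i/2}$, but as written the justification for the key lower bound on the dissipation in \eqref{a0} is incorrect and should be repaired along these lines.
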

	\begin{proof}
		For each $\varepsilon>0$, we see by recalling $|f(u^{\varepsilon})| = \sum_{i=1}^{I}|f_i(u^{\varepsilon})|$ that
		\begin{equation*}
			\frac{f_i(u^{\varepsilon}(x,t))}{1+\varepsilon|f(u^{\varepsilon}(x,t))|} \leq \frac{1}{\varepsilon} \quad \text{ for all } \quad (x,t)\in Q_T.
		\end{equation*}
		The existence of bounded weak solutions to \eqref{approx-system} is therefore standard. However, since we are not able to find a precise reference, a proof is given in Appendix \ref{appendix}. %Moreover, $u^{\varepsilon}_i$ is global in time since the non-linearity $f_i(u^{\varepsilon})/(1+\varepsilon|f(u^{\varepsilon})|)$ is bounded uniformly in time. 
		By multiplying \eqref{approx-system} by $\log(u_i^{\varepsilon}) + \mu_i$ (or more rigorously by $\log(u_i^\eps+\delta) + \mu_i$ for some $\delta>0$, then let $\delta\to0$), summing the resultants over $i=1,\ldots, I$, and then integrating over $\Omega$, we obtain
		\begin{equation}\label{a1}
			\begin{aligned}
			&\frac{d}{dt}\int_{\Omega}\sum_{i=1}^{I}u_i^{\varepsilon}(\log u_i^{\varepsilon} +\mu_i - 1) \, dx + \sum_{i=1}^{I}\frac{4d_i}{m_i}\int_{\Omega}\left|\nabla (u_i^{\varepsilon})^{\frac{m_i}{2}}\right|^2 dx\\
			&\qquad = \frac{1}{1+\varepsilon|f(u^{\varepsilon})|}\int_{\Omega}\sum_{i=1}^{I}f_i(u^{\varepsilon})(\log u_i^{\varepsilon} + \mu_i) \, dx\\
%			&\leq K\sum_{i=1}^{I}\int_{\Omega}u_i^{\varepsilon} \, dx \quad (\text{using \eqref{F3}})\\
			&\qquad \leq C\sum_{i=1}^{I}\int_{\Omega}u_i^{\varepsilon}(\log u_i^{\varepsilon} + \mu_i - 1) \, dx + C
			\end{aligned}			
		\end{equation}
		where we used \eqref{F3} and $x\leq \delta x\log x + C_\delta$ for all $x\geq 0$ and any $\delta > 0$ at the last step. 
%		We compute further
%		\begin{equation*}
%			-\int_{\Omega}\Delta (u_i^{\varepsilon})^{m_i}(\log u_i^{\varepsilon} + \mu_i) \, dx = \frac{4}{m_i}\int_{\Omega}\left|\nabla (u_i^{\varepsilon})^{\frac{m_i}{2}}\right|^2 dx.
%		\end{equation*} 
		Hence, by integrating \eqref{a1} over $(t,T)$ and using Gronwall's inequality, we obtain the desired estimate \eqref{a0}. The uniform bound on $u_i^{\varepsilon}\log u_i^{\varepsilon}$ in $L^{\infty}(0,T;L^1(\Omega))$ and $|\nabla (u_i^{\varepsilon})^{\frac{m_i}{2}}|$ in $L^2(0,T;L^2(\Omega))$ follow immediately from \eqref{a0}.	
	\end{proof}
	\subsection{Existence of renormalised solutions}\label{sec:limit}
	As it can be seen from Lemma \ref{existence-approx}, the {\it a priori} estimates of $u_i^{\varepsilon}$ are not enough to extract a convergent subsequence. 
{\color{black} 
	Following the idea from \cite{Fis15}, we consider another approximation of $u_i^{\varepsilon}$ by defining 
	\begin{equation}\label{omega}
	\omega_i^E(v) = (v_i - 3E) \, \zeta\left(\,\sum_{j=1}^{I} \frac{v_j}{E^{\alpha^i_j}} - 1\right) + 3E
	\end{equation}
	for $E \in \mathbb N$, and a smooth function $\zeta: \mathbb R \to [0,1]$ satisfying $\zeta \equiv 1$ on $(-\infty, 0)$ and $\zeta \equiv 0$ on $(1,\infty)$. The constants $\alpha^i_j$ are given by
	\begin{equation}\label{alphas}
		\alpha^i_i \coleq 1, \quad \alpha^i_j \coleq \frac{2}{\min (m_j, 2 - m_j)} \mbox{\ for\ } j \neq i.
	\end{equation}
	\begin{remark}
		The $\alpha^i_j$ are chosen in \eqref{alphas} for the sake of simplicity. In fact, we can choose any $\alpha^i_j$ such that
		\begin{equation*}
			\alpha_i^i = 1, \quad \text{ and } \quad \alpha^i_j \geq \max\left\{\frac{2-m_i}{2-m_j}; \frac{m_i}{m_j}\right\}.
		\end{equation*}
		See the proof of \ref{E2} in Lemma \ref{properties-omega}.
	\end{remark}
	\begin{lemma}\label{properties-omega}
		The smooth truncations $\omega_i^E$ defined in \eqref{omega} have the following properties:
		\begin{enumerate}[label=\emph{(E\theenumi)},ref={(E\theenumi)}]
			\item\label{E1} $\omega_i^E\in C^2(\mathbb R_+^I)$.
			\item\label{E2} For any $i\in \{1,\ldots, I\}$, there exists a constant $K_i>0$ such that the inequality
			\begin{equation}\label{key_estimate}
				\max_{1\leq j,k\leq I}\sup_{E\geq 1}\sup_{v\in \mathbb R_+^I}v_j^\frac{m_j}{2} v_{k}^{1 - \frac{m_k}{2}} |\partial_j \partial_k \omega_i^E(v)| \leq K_i.
			\end{equation}
			\item\label{E3} For every $E$ the set $\supp \ D\omega_i^E$ is bounded.
			\item\label{E4} For all $j\in \{1,\ldots, I\}$ and all $v\in \mathbb R_+^{I}$ there holds $\lim_{E\to\infty}\partial_j\omega_i^E(v)= \delta_{ij}$. 
			\item\label{E5} There exists a constant $K>0$ such that
			\begin{equation*}
				\max_{1\leq j\leq I}\sup_{E\geq 1}\sup_{v\in \mathbb R_+^I}|\partial_j\omega_i^E(v)| \leq K.
			\end{equation*}
			\item\label{E6} $\omega_i^E(v) = v_i$ for any $v\in \mathbb R_+^I$ with $\sum_{j=1}^{I} v_j E^{-\alpha^i_j} \leq 1$.
			\item\label{E7} For every $K>0$ and every $j, k$ there holds 
			\begin{equation*}
				\lim_{E\to \infty}\sup_{|v| \leq K}|\partial_j\partial_k\omega_i^E(v)| = 0.
			\end{equation*}
			\item\label{E8}
			$\omega_i^E(v) = v_i$ for any $v\in \mathbb R_+^I$ with $\sum_{j=1}^{I} \omega_j^E(v) E^{-\alpha^i_j} \leq 1$.
		\end{enumerate}
	\end{lemma}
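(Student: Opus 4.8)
The plan is to read off every property from the explicit first and second derivatives of $\omega_i^E$. Abbreviating $g_i(v)\coleq\sum_{\ell=1}^I v_\ell E^{-\alpha^i_\ell}$ (and analogously $g_j$ using the weights $\alpha^j_\ell$), and writing $\zeta,\zeta',\zeta''$ for the values of $\zeta$ and its derivatives at $g_i(v)-1$, one has on $\mathbb R_+^I$
\begin{equation*}
\partial_j\omega_i^E(v) = \delta_{ij}\,\zeta + (v_i-3E)\,\zeta'\,E^{-\alpha^i_j}
\end{equation*}
and
\begin{equation*}
\partial_k\partial_j\omega_i^E(v) = \delta_{ij}\,\zeta'\,E^{-\alpha^i_k} + \delta_{ik}\,\zeta'\,E^{-\alpha^i_j} + (v_i-3E)\,\zeta''\,E^{-\alpha^i_j-\alpha^i_k}.
\end{equation*}
Since $\zeta$ is smooth and $v\mapsto g_i(v)$ is affine, \ref{E1} is immediate ($\omega_i^E$ is in fact $C^\infty$), and \ref{E6} follows because $g_i(v)\le1$ forces $\zeta(g_i(v)-1)=1$, whence $\omega_i^E(v)=(v_i-3E)+3E=v_i$. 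For \ref{E3} I would use that $\zeta$ and $\zeta'$ both vanish once $g_i(v)>2$, so $\supp D\omega_i^E\subseteq\{g_i\le2\}\cap\mathbb R_+^I\subseteq\{v_\ell\le2E^{\alpha^i_\ell}\text{ for all }\ell\}$, a bounded set. Finally, for fixed $v$ (resp.\ on a fixed ball $\{|v|\le K\}$) the positivity of all $\alpha^i_\ell$ gives $g_i(v)\le|v|\sum_\ell E^{-\alpha^i_\ell}\to0$ as $E\to\infty$, so for $E$ large $g_i(v)<1$, hence $\zeta=1$ and $\zeta'=\zeta''=0$; this yields $\partial_j\omega_i^E(v)=\delta_{ij}$, proving \ref{E4}, and $\partial_k\partial_j\omega_i^E(v)=0$ uniformly over $\{|v|\le K\}$, proving \ref{E7}.

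For the first-derivative bound \ref{E5}, the only nontrivial term is $(v_i-3E)\zeta'E^{-\alpha^i_j}$, and $\zeta'$ is supported where $1\le g_i(v)\le2$. There $v_iE^{-1}\le g_i(v)\le2$ gives $v_i\le2E$ and $|v_i-3E|\le5E$, so this term is bounded by $5\|\zeta'\|_\infty E^{1-\alpha^i_j}\le5\|\zeta'\|_\infty$ because $\alpha^i_j\ge1$; together with $|\delta_{ij}\zeta|\le1$ this gives $K=1+5\|\zeta'\|_\infty$.

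The estimate \ref{E2} is the heart of the lemma and the step I expect to be the main obstacle, since it is precisely the balance that fails for the unweighted truncations of \cite{Fis15} and which dictates the choice \eqref{alphas}. On $\supp\zeta'\cup\supp\zeta''\subseteq\{1\le g_i(v)\le2\}$ one again has $v_\ell\le2E^{\alpha^i_\ell}$ for every $\ell$ and $|v_i-3E|\le5E$. Multiplying each of the three terms of $\partial_k\partial_j\omega_i^E$ by the degenerate weight $v_j^{m_j/2}v_k^{1-m_k/2}$ and inserting these bounds reduces each term to a constant times a power $E^\beta$, with exponents
\begin{align*}
\beta_1 &= \tfrac{m_i}{2}-\alpha^i_k\tfrac{m_k}{2}\quad(j=i),\\
\beta_2 &= \tfrac{2-m_i}{2}-\alpha^i_j\tfrac{2-m_j}{2}\quad(k=i),\\
\beta_3 &= 1-\alpha^i_j\tfrac{2-m_j}{2}-\alpha^i_k\tfrac{m_k}{2}.
\end{align*}
Uniform boundedness for $E\ge1$ holds precisely when all three exponents are nonpositive for every index pair $(j,k)$, i.e.
\begin{equation*}
\alpha^i_k\ge\frac{m_i}{m_k},\qquad \alpha^i_j\ge\frac{2-m_i}{2-m_j},\qquad \alpha^i_j\frac{2-m_j}{2}+\alpha^i_k\frac{m_k}{2}\ge1.
\end{equation*}
I would then verify that $\alpha^i_i=1$ and $\alpha^i_j=2/\min(m_j,2-m_j)=\max\{2/m_j,\,2/(2-m_j)\}$ meet all three: the first two hold because $m_i<2$ yields $2/m_j\ge m_i/m_j$ and $2/(2-m_j)\ge(2-m_i)/(2-m_j)$, with the diagonal cases $j=i$ or $k=i$ degenerating to equalities; and the third is automatic once the first two hold in the respective slots, since $\tfrac{2-m_i}{2}+\tfrac{m_i}{2}=1$. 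This reproduces the weaker sufficient condition recorded in the remark, and $K_i$ is the resulting maximum over the finitely many $(j,k)$.

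It remains to prove the self-referential statement \ref{E8}. By \ref{E6} it suffices to show that $\sum_j\omega_j^E(v)E^{-\alpha^i_j}\le1$ forces $g_i(v)\le1$. The plan rests on the one-sided comparison obtained by rewriting $\omega_j^E(v)=v_j+(3E-v_j)\bra{1-\zeta(g_j(v)-1)}$: whenever $v_j\le3E$ both factors of the correction are nonnegative, so $\omega_j^E(v)\ge v_j$, whereas if $v_j>3E$ then $g_j(v)\ge v_jE^{-1}>3$ forces $\zeta(g_j(v)-1)=0$ and hence $\omega_j^E(v)=3E$. The diagonal weight $\alpha^i_i=1$ turns the hypothesis into $\omega_i^E(v)\le E<3E$, which excludes $v_i>3E$; the remaining step is to rule out any component lying in the truncated regime, so that $\omega_j^E(v)\ge v_j$ for all $j$ and therefore $g_i(v)=\sum_j v_jE^{-\alpha^i_j}\le\sum_j\omega_j^E(v)E^{-\alpha^i_j}\le1$, whence $\omega_i^E(v)=v_i$ by \ref{E6}.
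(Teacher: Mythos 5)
Items \ref{E1}--\ref{E7} of your proposal are correct and follow essentially the same computation as the paper: the explicit first and second derivatives are right, and your bookkeeping of \ref{E2} by the three summands of $\partial_k\partial_j\omega_i^E$ is equivalent to the paper's case distinction over the index pairs $(j,k)$; it produces the same two sufficient conditions $\alpha^i_k\ge m_i/m_k$ and $\alpha^i_j\ge(2-m_i)/(2-m_j)$, with the third exponent controlled by their sum, and the verification that the choice \eqref{alphas} satisfies them is the same as in the paper.

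The proof of \ref{E8} is where the proposal has a genuine gap. You reduce the claim to showing that no component lies in the truncated regime, i.e.\ $v_j\le 3E$ for every $j$, so that $\omega_j^E(v)\ge v_j$ termwise; you establish this only for $j=i$ and defer the rest. The deferred step cannot be carried out: for $j\ne i$ one has $\alpha^i_j>1$, so a fully truncated component contributes only $\omega_j^E(v)E^{-\alpha^i_j}=3E^{1-\alpha^i_j}$ to the hypothesis, which is negligible for large $E$ and hence not excluded by $\sum_j\omega_j^E(v)E^{-\alpha^i_j}\le1$. Concretely, with $I=2$, $i=1$, $a\coleq\alpha^1_2\ge2$, take $v_1=E/2$ and $v_2=(\tfrac12+\epsilon_0)E^{a}$, where $\epsilon_0\in(0,1)$ is chosen so that $\zeta(\epsilon_0)=\tfrac{9}{10}$; then $\sum_jv_jE^{-\alpha^1_j}=1+\epsilon_0>1$, $\omega_1^E(v)=\tfrac34E\ne v_1$, $\omega_2^E(v)=3E$, and yet $\sum_j\omega_j^E(v)E^{-\alpha^1_j}=\tfrac34+3E^{1-a}\le1$ for $E\ge12$. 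So the comparison $\omega_j^E(v)\ge v_j$ for all $j$ is simply unavailable, and your route does not close. The paper argues differently: it substitutes $\omega_j^E(v)=(v_j-3E)\zeta(y)+3E$ with the \emph{common, $i$-weighted} argument $y=\sum_l v_lE^{-\alpha^i_l}-1$ into the hypothesis, obtaining $\zeta(y)(y+1)+3E(1-\zeta(y))z\le1$ with $z=\sum_lE^{-\alpha^i_l}$, and then $y\zeta(y)\le(1-3Ez)(1-\zeta(y))\le0$ since $Ez\ge1$, which forces $y\le0$ and hence $\omega_i^E(v)=v_i$ via \ref{E6}. Note that this substitution treats every $\omega_j^E$ as if it were truncated with the $i$-th weights, which is not the literal content of \eqref{omega}; your attempt to respect the genuinely $j$-dependent truncation arguments is precisely what exposes the obstruction above, so this point needs a clarification of the definition (or a different argument) rather than the one-sided comparison you propose.
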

	\begin{proof}
		Properties \ref{E1}, \ref{E3}, \ref{E4}, and \ref{E6} are immediate. To prove \ref{E2}, we first compute
		\begin{align*}
			\partial_i \omega_i^E(v) &= (v_i - 3E) \zeta'(\cdots) E^{-\alpha^i_i} + \zeta (\cdots),\\
			\partial_j \omega_i^E(v) &= (v_i - 3E) \zeta'(\cdots) E^{-\alpha^i_j}
		\end{align*}
		for $j \neq i$, where for brevity we write $(\cdots)$ instead of $\big(\sum_{j=1}^I E^{-\alpha^i_j} v_j - 1 \big)$. From that, one further gets the second derivatives
		\begin{align*}
			\partial_i^2 \omega_i^E(v) &= (v_i - 3E) \zeta''(\cdots) E^{-2\alpha^i_i} + 2 \zeta' (\cdots) E^{-\alpha^i_i},\\
			\partial_i \partial_j \omega_i^E(v) &= (v_i - 3E) \zeta'' (\cdots) E^{-\alpha_i^i - \alpha^i_j} + \zeta' (\cdots) E^{-\alpha^i_j},\\
			\partial_j \partial_k \omega_i^E(v) &= (v_i - 3E) \zeta'' (\cdots) E^{-\alpha^i_j - \alpha^i_k}
		\end{align*}
		for $j \neq i$ and $k \neq i$.

		To show \eqref{key_estimate}, we will consider the following cases:
		\begin{itemize}
			\item When $i = j = k$, we have
			\begin{align*}
				v_i^{\frac{m_i}{2}}v_i^{1-\frac{m_i}{2}}|\partial_i^2 \varphi_i^E(v)| &= v_i|\partial_i^2\varphi_i^E(v)|\\
				&\leq |v_i||v_i-3E|E^{-2\alpha_i^i}|\zeta''(\cdots)| + 2v_iE^{-\alpha_i^i}|\zeta'(\cdots)|.
			\end{align*}
			Employing the identities $\zeta'(\cdots) = \zeta''(\cdots) = 0$ for $v_i \geq 2 E^{\alpha_i^i}$ as well as the bound $|\zeta'(\cdots)| + |\zeta''(\cdots)| \leq C$, we can further estimate 
			\begin{align*}
				v_i^{\frac{m_i}{2}}v_i^{1-\frac{m_i}{2}}|\partial_i^2 \varphi_i^E(v)| &\leq CE^{-\alpha_i^i}|E^{\alpha_i^i} + E| + C.
			\end{align*}
			Therefore, by choosing $\alpha_i^i = 1$, we have the desired estimate \eqref{key_estimate} for $i=j=k$.
			
			\item When $k=i$ and $j\ne i$, we estimate using $\alpha_i^i = 1$, $\zeta'(\cdots) = \zeta''(\cdots) = 0$ when $v_j \geq 2E^{\alpha^i_j}$ or $v_i \geq 2E$, and $|\zeta'(\cdots)| + |\zeta''(\cdots)| \leq C$,
			\begin{align*}
				\qquad&v_j^{\frac{m_j}{2}}v_i^{1-\frac{m_i}{2}}|\partial_j\partial_i\varphi_i^E(v)|\\
				&\leq v_j^{\frac{m_j}{2}}v_i^{1-\frac{m_i}{2}}|\zeta'(\cdots)|E^{-\alpha^i_j} + v_j^{\frac{m_j}{2}}v_i^{1-\frac{m_i}{2}}|\zeta''(\cdots)||v_i - 3E|E^{-\alpha^i_j - 1}\\
				&\leq CE^{\frac{m_j}{2}\alpha^i_j}E^{1-\frac{m_i}{2}}|\zeta'(\cdots)|E^{-\alpha^i_j} + CE^{\frac{m_j}{2}\alpha^i_j}E^{1-\frac{m_i}{2}}|\zeta''(\cdots)||E + 3E|E^{-\alpha^i_j - 1}\\
				&\leq CE^{\alpha^i_j(\frac{m_j}{2}-1) + (1-\frac{m_i}{2})}.
			\end{align*}
			Thus, \eqref{key_estimate} is proved in the case $k=i\ne j$ if we choose $\alpha^i_j$ such that
			\begin{equation}\label{alphaij1}
			\alpha^i_j(\frac{m_j}{2}-1) + (1-\frac{m_i}{2}) \leq 0 \quad \text{ or equivalently } \quad \alpha^i_j \geq \frac{2-m_i}{2-m_j}.
			\end{equation}
		\item When $j=i$ and $k\ne i$, we estimate similarly to the previous cases
		\begin{align*}
			v_i^{\frac{m_i}{2}}v_k^{1-\frac{m_k}{2}}|\partial_i\partial_k\varphi_i^E(v)| \leq CE^{\frac{m_i}{2} - \alpha^i_k\frac{m_k}{2}}.
		\end{align*}
		If we choose
		\begin{equation}\label{alphaij2}
			\frac{m_i}{2} - \alpha^i_k\frac{m_k}{2} \leq 0 \quad \text{ or equivalently } \quad \alpha^i_k \geq \frac{m_i}{m_k},
		\end{equation}
		then obviously \eqref{key_estimate} holds true for $j=i\ne k$.
		
		\item Finally, when $j\ne i$ and $k\ne i$, we estimate
		\begin{align*}
			v_j^{\frac{m_j}{2}}v_k^{1-\frac{m_k}{2}}|\partial_j\partial_k\varphi_i^E(v)|
			&\leq CE^{\alpha^i_j \frac{m_j}{2}}E^{\alpha^i_k(1-\frac{m_k}{2})}|\zeta''(\cdots)|E^{-\alpha^i_j - \alpha^i_k}E\\
			&\leq CE^{\alpha^i_j(\frac{m_j}{2}-1) - \alpha^i_k \frac{m_k}{2}+1}.
		\end{align*}
		It is easy to see that from \eqref{alphaij1} and \eqref{alphaij2} it follows
		\begin{equation*}
			\alpha^i_j\Big(\frac{m_j}{2}-1\Big) - \alpha^i_k \frac{m_k}{2}+1 \leq 0,
		\end{equation*}
		and hence \eqref{key_estimate} is proved in this case.
	\end{itemize}
				
	\medskip
	From \eqref{alphaij1} and \eqref{alphaij2}, we see that $\alpha^i_j > 1$ for all $j\ne i$, hence, \ref{E5} and \ref{E7} follow.
	
	\medskip
		For \ref{E8}, it is enough to show that $\sum_{j=1}^{I} \omega_j^E(v) E^{-\alpha^i_j} \leq 1$ yields $\sum_{j=1}^{I} v_j E^{-\alpha^i_j} \leq 1$. Set
		\begin{equation*}
			y \coleq \sum_{j=1}^{I} \frac{v_j}{E^{\alpha^i_j}} - 1 \quad \text{ and } \quad z \coleq \sum_{j=1}^{I} \frac{1}{E^{\alpha^i_j}}.
		\end{equation*}
		From $\omega_j^E(v) = (v_j - 3E)\zeta(y) + 3E$ and $\sum_{j=1}^{I} \omega_j^E(v) E^{-\alpha^i_j} \leq 1$ we deduce
		\begin{equation*}
			\zeta(y) (y+1) + 3E (1 - \zeta(y)) z \leq 1.
		\end{equation*}
		Due to $\zeta \leq 1$ and $E z \geq 1$, we are now led to
		\begin{equation*}
			y\zeta(y) \leq (1-3 E z) (1-\zeta(y)) \leq 0.
		\end{equation*}
		This entails $y \leq 0$ and, thus, completes the proof of the lemma.
	\end{proof}
}
	\begin{lemma}\label{a.e.-limit}
		Consider non-negative functions $u_0 = (u_{i,0})_i \in L^1(\Omega)^I$ which satisfy
		\[
		\sum_{i=1}^I \int_\Omega u_{i,0} \log u_{i,0} \, dx < \infty.
		\]
		Let $u^\varepsilon = (u_1^\varepsilon, \dotsc, u_I^\varepsilon)$ for $\varepsilon \rightarrow 0$ be the sequence of solutions to the regularised problems as stated in Lemma \ref{existence-approx}. 
		
		Then, there exists a subsequence $u^\varepsilon$ converging a.e.\ on $\Omega \times [0,\infty)$ to a limit $u \in L^\infty_{loc}(0,\infty; L^1(\Omega))^I$ with $u_i \log u_i \in L^\infty_{loc}(0,\infty; L^1(\Omega))$ for all $i \in \{1, \dotsc, I\}$.
		Furthermore, $(u_i^\varepsilon)^\frac{m_i}{2} \rightharpoonup (u_i)^\frac{m_i}{2}$ weakly in $L^2(0,T; H^1(\Omega))$ for all $i \in \{1, \dotsc, I\}$ and $T>0$.
	\end{lemma}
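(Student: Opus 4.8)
The plan is to obtain a.e.\ convergence of $u^\varepsilon$ indirectly: for each fixed $E$ I would establish strong $L^2(Q_T)$-compactness of the truncations $\omega_i^E(u^\varepsilon)$ by an Aubin--Lions argument, extract by a diagonal procedure a single subsequence along which every $\omega_i^E(u^\varepsilon)$ converges a.e., and then upgrade this to a.e.\ convergence of $u^\varepsilon$ itself by exploiting that the weighted truncations agree with $u_i^\varepsilon$ outside a set of small measure. To begin, fix $i$, $E$, $T$ and collect bounds uniform in $\varepsilon$. From the explicit form \eqref{omega} one checks $0 \le \omega_i^E \le 3E$, so $\omega_i^E(u^\varepsilon)$ is bounded in $L^\infty(Q_T)$. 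For the spatial gradient, write $\nabla \omega_i^E(u^\varepsilon) = \sum_j \frac{2}{m_j}\,\partial_j\omega_i^E(u^\varepsilon)\,(u_j^\varepsilon)^{1-m_j/2}\nabla (u_j^\varepsilon)^{m_j/2}$; on $\supp D\omega_i^E$ (bounded by \ref{E3}) the factor $\partial_j\omega_i^E(u^\varepsilon)(u_j^\varepsilon)^{1-m_j/2}$ is bounded by a constant $C(E)$, since $m_j<2$ forces $1-m_j/2\ge 0$ and $|\partial_j\omega_i^E|\le K$ by \ref{E5}. Together with the uniform $L^2(Q_T)$-bound on $\nabla (u_j^\varepsilon)^{m_j/2}$ from Lemma \ref{existence-approx}, this gives $\omega_i^E(u^\varepsilon)$ bounded in $L^2(0,T;H^1(\Omega))$.

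The crux is a uniform bound on $\partial_t \omega_i^E(u^\varepsilon)$. Writing $\partial_t \omega_i^E(u^\varepsilon) = \sum_j \partial_j\omega_i^E(u^\varepsilon)\,\partial_t u_j^\varepsilon$, using \eqref{regulweak} and the identity $\nabla (u_j^\varepsilon)^{m_j} = 2(u_j^\varepsilon)^{m_j/2}\nabla(u_j^\varepsilon)^{m_j/2}$, I would test against $\phi$ and integrate the diffusion term by parts once, producing a flux term $\sum_j \partial_j\omega_i^E(u^\varepsilon)\nabla(u_j^\varepsilon)^{m_j}$ and a bilinear term
\[
\sum_{j,k}\frac{4}{m_k}\,\partial_j\partial_k\omega_i^E(u^\varepsilon)\,(u_j^\varepsilon)^{m_j/2}(u_k^\varepsilon)^{1-m_k/2}\,\nabla(u_j^\varepsilon)^{m_j/2}\!\cdot\nabla(u_k^\varepsilon)^{m_k/2}.
\]
The flux is bounded in $L^2(Q_T)$ (the factor $\partial_j\omega_i^E(u^\varepsilon)(u_j^\varepsilon)^{m_j/2}$ is bounded by $C(E)$ on the support, the gradient lies in $L^2$), while in the bilinear term the weighted second-derivative factor is bounded by $K_i$ \emph{precisely} by the key estimate \ref{E2}, so by Cauchy--Schwarz it is bounded in $L^1(Q_T)$ uniformly in $\varepsilon$. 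This is exactly the role of the weights $\alpha^i_j$: without \ref{E2} the cross terms would not close. The reaction contribution $\partial_j\omega_i^E(u^\varepsilon)f_j^\varepsilon(u^\varepsilon)$ is bounded in $L^\infty(Q_T)$ uniformly in $\varepsilon$, because on $\supp D\omega_i^E$ the argument $u^\varepsilon$ is bounded, whence $|f_j(u^\varepsilon)|$ is bounded by \eqref{F1}, and $|f_j^\varepsilon|\le|f_j|$. Collecting these, $\partial_t\omega_i^E(u^\varepsilon)$ is bounded in $L^1(0,T;(W^{1,q}(\Omega))')$ for any $q>\max\{d,2\}$, uniformly in $\varepsilon$.

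With the compact embedding $H^1(\Omega)\hookrightarrow\hookrightarrow L^2(\Omega)\hookrightarrow(W^{1,q}(\Omega))'$, the Aubin--Lions--Simon lemma makes $\{\omega_i^E(u^\varepsilon)\}_\varepsilon$ relatively compact in $L^2(Q_T)$. A diagonal extraction over $i\in\{1,\dots,I\}$, $E\in\mathbb N$ and $T\in\mathbb N$ yields one subsequence along which $\omega_i^E(u^\varepsilon)\to w_i^E$ a.e.\ on $\Omega\times[0,\infty)$. To pass from the truncations to $u^\varepsilon$, note that since $\alpha^i_j\ge1$ we have $\sum_j u_j^\varepsilon E^{-\alpha^i_j}\le E^{-1}\sum_j u_j^\varepsilon$, so by \ref{E6} the set $\{\sum_j u_j^\varepsilon>E\}$ --- on whose complement $\omega_i^E(u^\varepsilon)=u_i^\varepsilon$ --- has measure at most $E^{-1}\sum_j\|u_j^\varepsilon(t)\|_{L^1(\Omega)}\le C(T)/E$, uniformly in $\varepsilon$ by the uniform $L^1$-bound implied by Lemma \ref{existence-approx}. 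Splitting $\{|u_i^\varepsilon-u_i^{\varepsilon'}|>\delta\}$ into the two large-value sets and the set where both arguments are moderate and hence $\omega_i^E$ coincides with $u_i^\varepsilon$, and using the $L^2(Q_T)$-Cauchy property of $\omega_i^E(u^\varepsilon)$, shows $u_i^\varepsilon$ is Cauchy in measure; a further subsequence converges a.e.\ to some $u_i$, with $w_i^E=\omega_i^E(u)$. Finally, Fatou's lemma applied to the entropy bound \eqref{a0} gives $u_i\in L^\infty_{loc}(0,\infty;L^1(\Omega))$ and $u_i\log u_i\in L^\infty_{loc}(0,\infty;L^1(\Omega))$; the bounds $\int_\Omega u_i^\varepsilon\le C$ together with $\nabla(u_i^\varepsilon)^{m_i/2}\in L^2$ (and Gagliardo--Nirenberg) make $(u_i^\varepsilon)^{m_i/2}$ bounded in $L^2(0,T;H^1(\Omega))$, and since $(u_i^\varepsilon)^{m_i/2}\to u_i^{m_i/2}$ a.e., its weak $L^2(0,T;H^1)$-limit is identified as $u_i^{m_i/2}$, which is the last assertion.

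I expect the time-derivative estimate to be the main obstacle: it is the only place where the degenerate nonlinear diffusion interacts nontrivially with the truncation, and getting a bound \emph{uniform} in $\varepsilon$ there is the precise reason the weights $\alpha^i_j$ --- hence the estimate \ref{E2} --- were introduced.
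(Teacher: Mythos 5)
Your proposal is correct and follows essentially the same route as the paper's proof: uniform $L^2(0,T;H^1(\Omega))$ bounds on the weighted truncations together with a dual-space bound on $\partial_t\omega_i^E(u^\varepsilon)$ obtained from the key estimate \ref{E2}, then Aubin--Lions with a diagonal extraction, a measure-theoretic upgrade to a.e.\ convergence of $u^\varepsilon$ itself, and Fatou plus weak compactness for the remaining assertions. The only real deviation is the upgrade step: you show $u_i^\varepsilon$ is Cauchy in measure directly, using that $\omega_i^E(u^\varepsilon)=u_i^\varepsilon$ outside a set of measure at most $C(T)/E$ uniformly in $\varepsilon$, whereas the paper first constructs the intermediate limits $z_i^E$ and invokes property \ref{E8} and the entropy bound on $z_i^E\log z_i^E$ to send $E\to\infty$; your variant is slightly more economical since it bypasses \ref{E8}.
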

	\begin{proof}
		Due to the lack of uniform-in-$\eps$ estimates of the non-linearities, it is difficult to show directly, for instance by means of an Aubin--Lions lemma, that $u_i^{\varepsilon}$ has a convergent subsequence. Following the ideas from \cite{Fis15}, we first prove that $\omega^E_i(u^{\varepsilon})$ converges (up to a subsequence) to $z_i^E$ as $\varepsilon \to 0$, and then that $z_i^E$ converges to $u_i$ (up to a subsequence) as $E\to\infty$. In combination with the convergence of $\omega_i^E(u^{\varepsilon})$ to $u^{\varepsilon}_i$ for $E\to \infty$, this leads to the desired result.
		
		\medskip
%		Due to the definition of $\omega_i^E$ and the bounds $u_i^\varepsilon\log u_i^\varepsilon$ in $L^{\infty}(0,T;L^1(\Omega))$ uniformly in $\varepsilon>0$, it follows at once that $\{\omega_i^E(u^\varepsilon)\}$ is bounded in $L^{m_i+1}(Q_T)$ uniformly in $\varepsilon>0$ for each $E \in \mathbb N$. 
		Due to the bound $\varphi_i^E \leq 3E$, it follows at once that $\{\omega_i^E(u^\varepsilon)\}$ is bounded in $L^2(0,T; L^2(\Omega))$ uniformly in $\varepsilon>0$ for each $E \in \mathbb N$. 
		Next, by the chain rule we have
		\begin{equation*}
			\nabla \omega_i^{E}(u^{\varepsilon}) = \sum_{j=1}^{I}\partial_j\omega_i^{E}(u^{\varepsilon})\nabla u_j^{\varepsilon} = \sum_{j=1}^{I}\frac{2}{m_j}\partial_j\omega_i^E(u^{\varepsilon})\left(u_j^\varepsilon\right)^{1-\frac{m_j}{2}}\nabla(u_j^{\varepsilon})^{\frac{m_j}{2}}.
		\end{equation*}
		Thus, 
		\begin{equation*}
			\begin{aligned}
			\int_{Q_T}|\nabla \omega_i^E(u^\varepsilon)|^2 \, dx \, dt  &\leq I\sum_{j=1}^{I}\frac{4}{m_j^2}\int_{Q_T}|\partial_j\omega_i^E(u^\varepsilon)|^2|u_j^\varepsilon|^{2-m_j}\left|\nabla(u_j^\varepsilon)^{\frac{m_j}{2}}\right|^2 \, dx \, dt \\
			&\leq C(E)\sum_{j=1}^{I}\int_{Q_T}\left|\nabla(u_j^\varepsilon)^{\frac{m_j}{2}}\right|^2 \, dx \, dt 
			\end{aligned}
		\end{equation*}
		thanks to the compact support of $D\omega_i^E$, $m_j< 2$, and the $L^2(0,T;L^2(\Omega))$ bound on $\nabla (u_j^{\varepsilon})^{\frac{m_j}{2}}$ from Lemma \ref{existence-approx}. As a consequence, we know that $\{\omega_i^E(u^\varepsilon)\}$ is bounded (uniformly in $\varepsilon$) in $L^{2}(0,T;H^{1}(\Omega))$.
		
		To apply the Aubin--Lions Lemma, we need an estimate concerning the time derivative of $\omega_i^{E}(u^{\varepsilon})$. Using $\partial_j \omega_i^{E}(u^{\varepsilon}) \psi$ as a test function in \eqref{regulweak} and summing over $j\in \{1,\ldots, I\}$, we obtain for almost all $t_2 > t_1 \geq 0$, 
		\begin{align}
			&\hspace{-2em} \int_{\Omega}\omega_i^E(u^{\varepsilon}(\cdot, t_2))\psi(\cdot, t_2) \, dx - \int_{\Omega}\omega_i^E(u^{\varepsilon}(\cdot, t_1))\psi(\cdot, t_1) \, dx - \int_{t_1}^{t_2}\int_{\Omega}\omega_i^E(u^{\varepsilon})\partial_t\psi \, dx \, dt\nonumber \\
			= - &\sum_{j,k=1}^{I}\frac{4d_j}{m_k}\int_{t_1}^{t_2}\int_{\Omega}\psi\left[\partial_j\partial_k\omega_i^E(u^{\varepsilon})(u_j^{\varepsilon})^{\frac{m_j}{2}}(u_k^{\varepsilon})^{1-\frac{m_k}{2}}\right]{\nabla(u_j^{\varepsilon})^{\frac{m_j}{2}}}\cdot {\nabla(u_k^{\varepsilon})^{\frac{m_k}{2}}} \, dx \, dt\nonumber \\
			\quad - &\sum_{j=1}^{I} 2 d_j \int_{t_1}^{t_2}\int_{\Omega}\partial_j\omega_i^E(u^{\varepsilon})(u_j^{\varepsilon})^{\frac{m_j}{2}}\nabla (u^{\varepsilon}_j)^{\frac{m_j}{2}} \cdot \nabla \psi \, dx \, dt\nonumber\\
			\quad + & \sum_{j=1}^{I}\int_{t_1}^{t_2}\int_{\Omega}\partial_j\omega_i^E(u^{\varepsilon})\frac{f_j(u^{\varepsilon})}{1+\varepsilon|f(u^{\varepsilon})|}\psi \, dx \, dt.\label{omega-E}
		\end{align}
		The third term on the right hand side is clearly bounded uniformly in $\varepsilon$ for each fixed $E \in \mathbb N$ since $D\omega_i^E$ has a compact support. The first and second terms are bounded uniformly in $\varepsilon$ thanks to the boundedness of $\nabla(u_k^\varepsilon)^{\frac{m_k}{2}}$ in $L^2(0,T;L^2(\Omega))$ for all $k=1,\ldots, I$, and properties \ref{E2}, \ref{E3} in Lemma \ref{properties-omega}. It follows then that $\partial_t\omega_i^{E}(u^{\varepsilon})$ is bounded uniformly (w.r.t.\ $\varepsilon>0$) in $L^1(0,T;(W^{1,\infty}(\Omega))')$ for each fixed $E \in \mathbb N$.
		
		Therefore, by applying an Aubin--Lions lemma to the sequence $\{\omega_i^E(u^\varepsilon)\}_{\varepsilon>0}$, for fixed $E \in \mathbb N$, there exists a subsequence (not relabeled) of $\omega_i^{E}(u^{\varepsilon})$ converging strongly in $L^2(0,T; L^2(\Omega))$ and, thus, almost everywhere as $\varepsilon\to 0$. Using a diagonal sequence argument, one can extract a further subsequence such that $\omega_i^{E}(u^{\varepsilon})$ converges a.e.\ in $\Omega$ to a measurable function $z_i^{E}$ for all $E \in \mathbb N$ and $i = 1, \dotsc, I$.
		
		\medskip
		We next prove that $z_i^E$ converges a.e.\ to some measurable function $u_i$ as $E\to\infty$.		
		First, since $\sum_{j=1}^{I}u_j^{\varepsilon}\log u_j^{\varepsilon}$ is uniformly bounded w.r.t.\ to $\varepsilon>0$ in $L^{\infty}(0,T;L^1(\Omega))$, it follows that $\omega_i^E(u^{\varepsilon})\log \omega_i^{E}(u^{\varepsilon})$ is uniformly bounded w.r.t.\ to $\varepsilon>0$ and $E \in \mathbb N$ in $L^{\infty}(0,T;L^1(\Omega))$. This is trivial in case $E \leq 1$, $\varphi_i^E(u^\varepsilon) \leq 1$, or $\sum_{j=1}^I u_j^\varepsilon E^{-\alpha^i_j} \leq 1$. Otherwise, there exists some $j$ such that $u_j^\varepsilon E^{-\alpha^i_j} > 1/I$ holds true. And as $\alpha^i_j > 1$, we derive
		\[
			\varphi_i^E(u^\varepsilon) \leq 3 E \leq 3 E I u_j^\varepsilon E^{-\alpha^i_j} \leq 3 I u_j^\varepsilon.
		\]
		The bound $\varphi_i^E(u^\varepsilon) > 1$ ensures $\varphi_i^E(u^\varepsilon) \log \varphi_i^E(u^\varepsilon) \leq 3 I u_j^\varepsilon \log ( 3 I u_j^\varepsilon )$ and the claim follows. Thus, $z_i^E\log z_i^E$ is uniformly bounded in $L^{\infty}(0,T;L^1(\Omega))$ w.r.t.\ $E \in \mathbb N$ by Fatou's Lemma.		
		Secondly, $z_j^E(x,t) = z_j^{\widetilde{E}}(x,t)$ for all $\widetilde{E}>E$, if $\sum_{j=1}^{I}z_j^E(x,t)E^{-\alpha_j^i} < 1$. Indeed, 
		\[
			\sum_{j=1}^{I}z_j^E(x,t)E^{-\alpha_j^i} = \lim_{\varepsilon\to0}\sum_{j=1}^{I}\omega_j^{E}(u^{\varepsilon}(x,t))E^{-\alpha_j^i},
		\]
		guarantees that $\sum_{j=1}^{I}\omega_j^{E}(u^{\varepsilon}(x,t))E^{-\alpha_j^i} < 1$ holds true for sufficiently small $\varepsilon>0$. Thanks to the property \ref{E8} in Lemma \ref{properties-omega}, it follows that $\omega_i^{E}(u^{\varepsilon}(x,t)) = u^{\varepsilon}_i(x,t) = \omega_i^{\widetilde{E}}(u^{\varepsilon}(x,t))$ for small enough $\varepsilon$ and all $\widetilde{E} > E$. Therefore, due to $z_i^E(x,t) = \lim_{\varepsilon\to 0}\omega_i^E(u^{\varepsilon}(x,t))$, we obtain $z_i^{E}(x,t) = z_i^{\widetilde{E}}(x,t)$ for all $\widetilde{E}>E$ as desired. As a result, if $\sum_{j=1}^{I}z_j^E(x,t)E^{-\alpha_j^i} < 1$ for some $(x,t)$, then $\lim_{E\to \infty}z_i^E(x,t)$ exists and is finite for all $i=1,\ldots, I$.		
		Using the fact that $\sum_{j=i}^{I}z_j^E\log z_j^E$ is bounded uniformly w.r.t.\ $E \in \mathbb N$ in $L^{\infty}(0,T;L^1(\Omega))$, we have
		\begin{equation*}
			\lim_{E\to\infty}\mathcal L^{n+1}\left(\left\{(x,t)\in Q_T: \sum_{j=1}^Iz_j^E(x,t)E^{-\alpha_j^i} \geq 1 \right\}\right) = 0
		\end{equation*}
		where $\mathcal L^{n+1}$ is the Lebesgue measure in $\mathbb R^{n+1}$. Hence, %$\lim_{E\to\infty}z_i^{E}(x,t)$ exists a.e.\ in $Q_T$. We denote 
		the limit $u_i(x,t) = \lim_{E\to\infty}z_i^E(x,t)$ exists for a.e.\ $(x,t)\in Q_T$. Moreover, $u_i\log u_i \in L^{\infty}(0,T;L^1(\Omega))$ due to Fatou's Lemma and $z_i^E\log z_i^E\in L^{\infty}(0,T;L^1(\Omega))$.
		Since $\sum_{j=1}^{I}u_j^{\varepsilon}$ is uniformly bounded in $L^{1}(Q_T)$, we find
		\[
			\begin{aligned}
			0 &= \lim_{E\to\infty}\mathcal L^{n+1}\left(\left\{(x,t)\in Q_T:\sum_{j=1}^{I}u_j^{\varepsilon}(x,t)E^{-\alpha_j^i}\geq 1\right\}\right)\\
			&\geq \lim_{E\to\infty}\mathcal L^{n+1}\left(\left\{(x,t)\in Q_T: u_i^{\varepsilon}(x,t) \ne \omega_i^{E}(u^{\varepsilon})(x,t) \right\}\right)
			\end{aligned}
		\]
		and this limit is uniform in $\varepsilon>0$. Now, we estimate for $\delta>0$
		\begin{align*}
			&\mathcal L^{d+1} \bigg( \bigg\{ (x,t) \in Q_T : |u_i^\varepsilon(x,t) - u_i(x,t)| > \delta \bigg\} \bigg) \\
			&\quad \leq \mathcal L^{d+1} \bigg( \bigg\{ (x,t) \in Q_T : u_i^\varepsilon(x,t) \neq \varphi_i^E(u^\varepsilon)(x,t) \bigg\} \bigg) \\
			&\qquad + \mathcal L^{d+1} \bigg( \bigg\{ (x,t) \in Q_T : |\varphi_i^E(u^\varepsilon)(x,t) - z_i^E(x,t)| > \frac{\delta}{2} \bigg\} \bigg) \\
			&\qquad + \mathcal L^{d+1} \bigg( \bigg\{ (x,t) \in Q_T : |z_i^E(x,t) - u_i(x,t)| > \frac{\delta}{2} \bigg\} \bigg).
		\end{align*}
		As we have proved that $\omega_i^E(u^{\varepsilon}) \to z_i^E$ a.e.\ in $Q_T$ for $\varepsilon\to0$ and fixed $E \in \mathbb N$, and that $z_i^{E} \to u_i$ a.e.\ in $Q_T$ as $E\to\infty$, we infer the convergence in measure of $u_i^\varepsilon$ to $u_i$ for $\eps\to 0$, and convergence a.e.\ of another subsequence.
		
		The uniform bound on $u_i^\varepsilon \log u_i^\varepsilon$ in $L^\infty(0, T; L^1(\Omega))$ and the convergence $u_i^{\varepsilon} \to u_i$ a.e.\ ensure that $u_i^\varepsilon \rightarrow u_i$ strongly in $L^p(0,T; L^1(\Omega))$ for all $p \geq 1$. One can easily prove this by truncating $u_i^\varepsilon$ at a sufficiently large threshold. By the strong convergence of $u_i^\varepsilon$ to $u_i$ in $L^1(0,T; L^1(\Omega))$, we are able to prove the distributional convergence of $(u_i^\varepsilon)^\frac{m_i}{2}$ to $(u_i)^\frac{m_i}{2}$ by noting that $m_i<2$. The uniform $L^2(0,T; H^1(\Omega))$ bound on $(u_i^\varepsilon)^\frac{m_i}{2}$ then leads to a subsequence $(u_i^\varepsilon)^\frac{m_i}{2}$ which converges to $(u_i)^\frac{m_i}{2}$ weakly in $L^2(0,T; H^1(\Omega))$. Thanks to the weak lower semicontinuity of the $L^2(0,T; L^2(\Omega))$ norm, we also deduce 
		\[
		\int_0^T \int_\Omega |\nabla (u_i)^\frac{m_i}{2}|^2 \, dx \, dt \leq \liminf_{\varepsilon \rightarrow 0} \int_0^T \int_\Omega |\nabla (u_i^\varepsilon)^\frac{m_i}{2}|^2 \, dx \, dt
		\]
		for all $i \in \{1,\dotsc,I\}$, where the right hand side is bounded via the uniform $L^2(0,T; H^1(\Omega))$ bound on $(u_i^\varepsilon)^\frac{m_i}{2}$. This completes the proof of the lemma. 
	\end{proof}
	
	At this point, we expect the function $u = (u_1, \ldots, u_I)$ in Lemma \ref{a.e.-limit} to be a renormalised solution to \eqref{e0}. We will first derive an equation admitting $\omega_i^E(u)$ as a solution. This equation is already ``almost'' matching the formulation for a renormalised solution \eqref{renormalised-form} except for a ``defect measure''.
	\begin{lemma}
		Let $u = (u_1, \ldots, u_I)$ be the functions constructed in Lemma \ref{a.e.-limit}. Then, for any $\psi \in C^{\infty}([0,T];C_0^{\infty}(\overline \Omega))$, $\omega_i^E(u)$ satisfies
		\begin{equation}\label{defect-measure}
		\begin{aligned}
			&\hspace{-2em}\int_{\Omega}\omega_i^E(u(\cdot, T))\psi(\cdot, T) \, dx - \int_{\Omega}\omega_i^E(u(\cdot, 0))\psi(\cdot, 0) \, dx - \int_{Q_T}\omega_i^E(u)\partial_t\psi \, dx \, dt\\
			= &-\int_{Q_T}\psi\, d\mu_i^{E}(x,t)- \sum_{j=1}^{I}d_j m_j\int_{Q_T}\partial_j\omega_i^E(u)u_j^{m_j-1}\nabla u_j \cdot \nabla \psi \, dx \, dt\\ 
			&+\sum_{j=1}^{I}\int_{Q_T}\partial_j\omega_i^E(u){f_j(u)}\psi \, dx \, dt
		\end{aligned}
		\end{equation}	
		where $\mu_i^E$ is a signed Radon measure satisfying
		\begin{equation}\label{vanish-measure}
			\lim_{E\to\infty}|\mu_i^E|(\overline{\Omega}\times [0,T)) = 0
		\end{equation}
		for all $T>0$ and $i \in \{1, \ldots, I\}$.
	\end{lemma}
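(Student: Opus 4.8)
The plan is to pass to the limit $\varepsilon \to 0$ in the approximate identity \eqref{omega-E} (taken with $t_1 = 0$ and $t_2 = T$), reading off \eqref{defect-measure} while simultaneously \emph{defining} $\mu_i^E$ as the weak-$*$ limit of the quadratic gradient contribution. Concretely, I would set
\[
d\mu_i^{E,\varepsilon} \coleq \sum_{j,k=1}^I \frac{4 d_j}{m_k}\Big[\partial_j\partial_k\omega_i^E(u^\varepsilon)(u_j^\varepsilon)^{\frac{m_j}{2}}(u_k^\varepsilon)^{1-\frac{m_k}{2}}\Big]\nabla(u_j^\varepsilon)^{\frac{m_j}{2}}\cdot\nabla(u_k^\varepsilon)^{\frac{m_k}{2}}\,dx\,dt,
\]
which is exactly the first term on the right-hand side of \eqref{omega-E}. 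Property \ref{E2} bounds the bracketed coefficient by $K_i$, and since every second derivative of $\omega_i^E$ carries a factor $\zeta'(\cdots)$ or $\zeta''(\cdots)$, this coefficient is supported in the transition set $S_i^E \coleq \{v \in \mathbb R_+^I : 1 \leq \sum_l E^{-\alpha^i_l} v_l \leq 2\}$. Hence, by Cauchy--Schwarz and the $L^2(Q_T)$ bound on $\nabla(u_j^\varepsilon)^{m_j/2}$ from Lemma \ref{existence-approx}, the $\mu_i^{E,\varepsilon}$ are bounded in total variation uniformly in $\varepsilon$ for each fixed $E$. Weak-$*$ compactness of Radon measures together with a diagonal extraction over $E \in \mathbb N$ then yields a single subsequence along which $\mu_i^{E,\varepsilon} \rightharpoonup^* \mu_i^E$ for every $E$; I would take this subsequence compatible with all convergences from Lemma \ref{a.e.-limit}.

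With $\mu_i^E$ defined, the passage to the limit in the remaining terms of \eqref{omega-E} is routine. For the time and boundary terms I would use the strong $L^2(Q_T)$ convergence $\omega_i^E(u^\varepsilon) \to \omega_i^E(u)$ (the a.e.\ convergence of Lemma \ref{a.e.-limit} together with $\omega_i^E \leq 3E$), along with $\omega_i^E(u_0^\varepsilon) \to \omega_i^E(u_0)$, valid for a.e.\ $T$. In the reaction term the compact support of $D\omega_i^E$ (property \ref{E3}) confines the integrand to a region where $u^\varepsilon$, hence $f_j(u^\varepsilon)$, is bounded, so $\frac{f_j(u^\varepsilon)}{1+\varepsilon|f(u^\varepsilon)|} \to f_j(u)$ boundedly and a.e., and dominated convergence yields the reaction term of \eqref{defect-measure}. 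In the drift term the factor $\partial_j\omega_i^E(u^\varepsilon)(u_j^\varepsilon)^{m_j/2}\nabla\psi$ converges strongly in $L^2(Q_T)$ (bounded via \ref{E5} and the compact support, plus a.e.\ convergence), while $\nabla(u_j^\varepsilon)^{m_j/2} \rightharpoonup \nabla(u_j)^{m_j/2}$ weakly in $L^2$; the strong--weak pairing passes to the limit and, after rewriting $(u_j)^{m_j/2}\nabla(u_j)^{m_j/2} = \frac{m_j}{2} u_j^{m_j-1}\nabla u_j$, gives the drift term of \eqref{defect-measure}. Finally the quadratic term tends to $-\int_{Q_T}\psi\,d\mu_i^E$ by weak-$*$ convergence, as $\psi \in C(\overline{Q_T})$. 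This establishes \eqref{defect-measure}.

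The genuinely delicate point is the vanishing \eqref{vanish-measure}, and I expect it to be the main obstacle. By lower semicontinuity of the total variation under weak-$*$ convergence and the bound above,
\[
|\mu_i^E|(\overline\Omega\times[0,T)) \leq \liminf_{\varepsilon\to0} |\mu_i^{E,\varepsilon}|(Q_T) \leq C \liminf_{\varepsilon\to 0}\sum_{j=1}^I \int_{\{u^\varepsilon\in S_i^E\}} |\nabla(u_j^\varepsilon)^{m_j/2}|^2\,dx\,dt \eqcol C\, b_E,
\]
so everything reduces to $b_E \to 0$. The obstacle is that $|\nabla(u_j^\varepsilon)^{m_j/2}|^2$ is only bounded in $L^1(Q_T)$ and need not be equi-integrable, so the smallness of $\mathcal L^{d+1}(\{u^\varepsilon\in S_i^E\})$ (established as in Lemma \ref{a.e.-limit}) does \emph{not} suffice. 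The key structural fact I would exploit is that the transition sets are disjoint along geometric sequences of levels: since all $\alpha^i_l \geq 1$, for $E' \geq 4E$ and $v \in S_i^{E'}$ one has $1 \leq \sum_l (E')^{-\alpha^i_l} v_l \leq \tfrac14 \sum_l E^{-\alpha^i_l} v_l$, forcing $\sum_l E^{-\alpha^i_l} v_l \geq 4 > 2$ and thus $S_i^E \cap S_i^{E'} = \emptyset$. Consequently, for any sequence $E_k \to \infty$ thinned so that $E_{k+1} \geq 4 E_k$, the sets $\{u^\varepsilon \in S_i^{E_k}\}$ are pairwise disjoint, whence
\[
\sum_k \sum_{j=1}^I \int_{\{u^\varepsilon \in S_i^{E_k}\}} |\nabla(u_j^\varepsilon)^{m_j/2}|^2\,dx\,dt \leq \sum_{j=1}^I \int_{Q_T} |\nabla(u_j^\varepsilon)^{m_j/2}|^2\,dx\,dt \leq C
\]
uniformly in $\varepsilon$. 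Superadditivity of $\liminf$ then gives $\sum_k b_{E_k} \leq C$, so $b_{E_k}\to 0$. A standard contradiction argument upgrades this to the full limit: if $\limsup_{E\to\infty}|\mu_i^E|(Q_T)$ were positive, one could extract a $4$-separated subsequence violating $b_{E_k}\to 0$. This proves \eqref{vanish-measure} and completes the lemma.
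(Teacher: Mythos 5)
Your derivation of \eqref{defect-measure} coincides with the paper's: both define $\mu_i^E$ as the weak-$*$ limit of the quadratic gradient term in \eqref{omega-E}, bounded in total variation via property \ref{E2} together with the $\varepsilon$-uniform $L^2$ bound on $\nabla(u_j^\varepsilon)^{m_j/2}$, and both pass to the limit in the remaining terms through the same strong/weak pairings. The genuinely different step is the proof of \eqref{vanish-measure}. The paper decomposes $Q_T$ into the level sets $\{K-1\le|u^\varepsilon|<K\}$, extracts weak-$*$ limits $\nu_{l,K}$ of the localised Dirichlet energies, controls $\sum_K \nu_{l,K}(\overline{Q}_T)$ by Fatou's lemma for the counting measure, and concludes by dominated convergence because $\sup_{K-1\le|v|<K}|\partial_j\partial_k\omega_i^E(v)|v_j^{m_j/2}v_k^{1-m_k/2}\to0$ for each fixed $K$ by property \ref{E7}, with the domination supplied by \ref{E2}. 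You instead exploit that $D^2\omega_i^E$ is supported in the annulus $S_i^E=\{v:1\le\sum_l E^{-\alpha^i_l}v_l\le2\}$ and that these annuli are pairwise disjoint along $4$-separated levels (which is correct, since all $\alpha^i_l\ge1$), so the localised Dirichlet energies are summable along such a subsequence uniformly in $\varepsilon$; superadditivity of $\liminf$ and a contradiction argument then upgrade the decay along the thinned subsequence to the full limit. Both arguments are valid and rest on the same underlying fact---that the total Dirichlet energy splits over disjoint regions of $Q_T$---but yours avoids introducing the auxiliary measures $\nu_{l,K}$ and never uses \ref{E7}, at the price of invoking the explicit form of $\omega_i^E$ (the annular support of $\zeta'$ and $\zeta''$) rather than only the abstract properties \ref{E1}--\ref{E8}; the paper's version is thus marginally more robust to changes in the truncation, while yours is more self-contained and arguably more transparent about where the smallness comes from.
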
	
	\begin{proof}
		Choosing $t_1 = 0$, $t_2 = T$, and using the convergence $u^{\varepsilon} \to u$ a.e.\ in $Q_T$, the weak convergence $\nabla (u_i^\varepsilon)^\frac{m_i}{2} \rightharpoonup \nabla (u_i)^\frac{m_i}{2}$ in $L^2(0, T; L^2(\Omega))$, and the fact that $D \omega_i^E(u^{\varepsilon})$ vanishes when $u^{\varepsilon}$ is too large, we straightforwardly obtain the convergence of the left hand side and the last two terms on the right hand side of \eqref{omega-E}. It remains to establish 
		\begin{multline*}
			-4\sum_{j,k=1}^{I}\frac{d_j}{m_k} \int_{Q_T}\psi\left[\partial_j\partial_k\omega_i^E(u^{\varepsilon})({u_j^{\varepsilon}})^{\frac{m_j}{2}}({u_k^{\varepsilon}})^{1-\frac{m_k}{2}}\right] {\nabla({u_j^{\varepsilon}})^{\frac{m_j}{2}}}\cdot {\nabla({u_k^{\varepsilon}})^{\frac{m_k}{2}}} \, dx \, dt \\
			\longrightarrow -\int_{Q_T}\psi\,d\mu_i^E(x,t)
		\end{multline*}
		with a signed Radon measure $\mu_i^E$ satisfying \eqref{vanish-measure}. By denoting
		\begin{equation*}
			\mu_{i,\varepsilon}^E\coleq 4\sum_{j,k=1}^{I}\frac{d_j}{m_k} \left[\partial_j\partial_k\omega_i^E(u^{\varepsilon})({u_j^{\varepsilon}})^{\frac{m_j}{2}}({u_k^{\varepsilon}})^{1-\frac{m_k}{2}}\right] {\nabla({u_j^{\varepsilon}})^{\frac{m_j}{2}}}\cdot {\nabla({u_k^{\varepsilon}})^{\frac{m_k}{2}}} \, dx \, dt,
		\end{equation*}
		we can use property \ref{E2} of the truncation function $\omega_i^E$ and the $\varepsilon$-uniform bound of $\nabla (u_i^{\varepsilon})^{\frac{m_i}{2}}$ in $L^2(0,T;L^2(\Omega))$, to obtain 
		\begin{equation*}
			|\mu_{i,\varepsilon}^{E}|(\overline{Q}_T) \leq C
		\end{equation*}
		for all $\varepsilon>0$. Therefore, by passing to a subsequence, we know that $\mu_{i,\varepsilon}^E$ weak-$\ast$ converges on $\overline{Q}_T$ to a signed Radon measure $\mu_i^{E}$ as $\varepsilon\to 0$. It remains to prove \eqref{vanish-measure}. Due to Young's inequality, we have
		\begin{equation*}
			\begin{aligned}
			&|\mu_{i,\varepsilon}^E|(\overline{Q}_T)\\
%			&\leq C\sum_{j,k=1}^{I}\int_{Q_T}|\partial_j\partial_k\omega_i^E(u^{\varepsilon})|\left|({u_j^{\varepsilon}})^{\frac{m_j}{2}}({u_k^{\varepsilon}})^{1-\frac{m_k}{2}}\right|\left|\nabla (u_j^\varepsilon)^{\frac{m_j}{2}}\right|^2 \, dx \, dt\\
			&\leq C\sum_{j,k=1}^{I}\sum_{K=1}^{\infty}\int_{Q_T}|\partial_j\partial_k\omega_i^E(u^{\varepsilon})| ({u_j^{\varepsilon}})^{\frac{m_j}{2}}({u_k^{\varepsilon}})^{1-\frac{m_k}{2}} \chi_{\{K-1\le |u^{\varepsilon}| < K\}}\left|\nabla (u_j^\varepsilon)^{\frac{m_j}{2}}\right|^2 \, dx \, dt\\
			&\leq C\sum_{l=1}^{I}\sum_{K=1}^{\infty}\nu_{l,K}^\varepsilon(\overline{Q}_T) \sup_{\genfrac{}{}{0pt}{1}{1 \leq j,k \leq I}{K-1\leq |v| < K}} |\partial_j\partial_k\omega_i^E(v) |v_j^{\frac{m_j}{2}}v_k^{1-\frac{m_k}{2}}
			\end{aligned}
		\end{equation*}
		where 
		\[
			\nu_{l,K}^{\varepsilon} \coleq \chi_{\{K-1\le |u^{\varepsilon}| < K\}}\left|\nabla (u_l^\varepsilon)^{\frac{m_l}{2}}\right|^2 \, dx \, dt.
		\]
		We stress that $\nu_{l,K}^{\varepsilon}$ is uniformly bounded w.r.t.\ $l$, $K$, and $\varepsilon$. Consequently, we may pass to a subsequence $\nu_{l,K}^{\varepsilon}$ which converges weak-$\ast$ on $\ol Q_T$ to a Radon measure $\nu_{l,K}$ as $\varepsilon \rightarrow 0$. Together with the weak-$\ast$ convergence of $\mu_{i,\varepsilon}^E$ to $\mu_i^E$ on $\overline{Q}_T$, we derive
		\begin{equation*}
			\begin{aligned}
				|\mu_i^E|(\overline{Q}_T) &\leq \liminf\limits_{\varepsilon\to 0}|\mu_{i,\varepsilon}^E|(\overline{Q}_T)\\
				&\leq C\sum_{l=1}^{I}\sum_{K=1}^{\infty} \nu_{l,K}(\overline{Q}_T) \sup_{\genfrac{}{}{0pt}{1}{1 \leq j,k \leq I}{K-1\leq |v| < K}} |\partial_j\partial_k\omega_i^E(v) |v_j^{\frac{m_j}{2}}v_k^{1-\frac{m_k}{2}}.
			\end{aligned}
		\end{equation*}
		Moreover,
		\[
			\sum_{K=1}^{\infty}\nu_{l,K}^{\varepsilon}(\overline{Q}_T) = \int_{Q_T}\left|\nabla({u_j^\varepsilon})^{\frac{m_j}{2}}\right|^2 \, dx \, dt,
		\]
		which is bounded uniformly in $\varepsilon$. Fatou's Lemma applied to the counting measure on $\mathbb N$ now entails
		\begin{equation*}
			\sum_{K=1}^{\infty} \nu_{l,K}(\overline{Q}_T) \leq \liminf_{\varepsilon \rightarrow 0} \sum_{K=1}^{\infty}\nu_{l,K}^{\varepsilon}(\overline{Q}_T) < +\infty.
		\end{equation*}
		Therefore, employing the dominated convergence theorem, we can finally estimate
		\begin{equation*}
			\begin{aligned}
				\lim\limits_{E\to\infty}|\mu_i^E|(\overline{Q}_T) &\leq C\sum_{l=1}^{I}\sum_{K=1}^{\infty} \nu_{l,K} (\overline{Q}_T) \lim\limits_{E\to\infty} \sup_{\genfrac{}{}{0pt}{1}{1 \leq j,k \leq I}{K-1\leq |v| < K}} |\partial_j\partial_k\omega_i^E(v) |v_j^{\frac{m_j}{2}}v_k^{1-\frac{m_k}{2}}\\
				&= C\sum_{l=1}^{I}\sum_{K=1}^{\infty} \nu_{l,K} (\overline{Q}_T) \cdot 0 = 0
			\end{aligned}
		\end{equation*}
		thanks to the properties \ref{E2} and \ref{E7} of the truncation function $\omega_i^E$.
	\end{proof}
	To prove Theorem \ref{existence-renormalised}, we use the following technical lemma whose proof can be found in \cite{Fis15}.
	\begin{lemma}[A weak chain rule for the time derivative]\label{weak-chain}\cite[Lemma 4]{Fis15}
		Let $\Omega$ be a bounded domain with Lipschitz boundary. Assume that $T>0$, $v\in L^2(0,T;H^1(\Omega)^I)$, and $v_0\in L^1(\Omega)^I$. Let $\nu_i$ be a Radon measure on $\overline{Q}_T$, $w_i \in L^1(Q_T)$, and $z_i\in L^2(0,T;L^2(\Omega)^I)$ for $1\leq i\leq I$. Assume moreover that for any $\psi\in C^{\infty}(\overline{Q}_T)$ with compact support we have 
		\begin{multline*}
			\int_{\Omega}v_i(T)\psi(T) \, dx -\int_{Q_T}v_i\frac{d}{dt}\psi \, dx \, dt - \int_{\Omega}v_0\psi(0) \, dx \\ = \int_{\overline{Q}_T}\psi \, d\nu_i + \int_{Q_T}w_i\psi  \, dx \, dt + \int_{Q_T}z_i\cdot \nabla\psi  \, dx \, dt.
		\end{multline*}
		Let $\xi:\mathbb R^I \to \mathbb R$ be a smooth function with compactly supported first derivatives. Then, we have for all $\psi\in C^{\infty}(\overline{Q}_T)$ with compact support
		\begin{equation*}
			\begin{aligned}
				\biggl|\int_{\Omega}\xi(v(T))\psi(T) \, dx &-\int_{Q_T}\xi(v)\frac{d}{dt}\psi \, dx \, dt - \int_{\Omega}\xi(v_0)\psi(0) \, dx - \sum_{i=1}^{I}\int_{Q_T}\psi \partial_i\xi(v)w_i \, dx \, dt\\
				&- \sum_{i=1}^{I}\int_{Q_T}\partial_i\xi(v)z_i\cdot \nabla \psi \, dx \, dt - \sum_{i,k=1}^{I}\int_{Q_T}\psi \partial_i\partial_k\xi(v)z_i\cdot \nabla v_k \, dx \, dt\biggr|\\
				&\leq C(\Omega)\|\psi\|_{\infty}(\sup_{u}|D\xi(u)|)\sum_{i=1}^{I}|\nu_i|(\overline{Q}_T).
			\end{aligned}
		\end{equation*}
	\end{lemma}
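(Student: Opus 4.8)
This lemma is a quantitative, rigorous version of the distributional chain rule. Formally the hypothesis asserts $\partial_t v_i = \nu_i + w_i - \nabla\cdot z_i$, and applying $\partial_t\xi(v)=\sum_i\partial_i\xi(v)\,\partial_t v_i$ together with the spatial chain rule $\nabla(\partial_i\xi(v))=\sum_k\partial_i\partial_k\xi(v)\,\nabla v_k$ reproduces exactly the terms in the conclusion, the flux contribution splitting as
\[
\sum_{i=1}^I\int_{Q_T} z_i\cdot\nabla\big(\partial_i\xi(v)\,\psi\big)
= \sum_{i=1}^I\int_{Q_T}\partial_i\xi(v)\,z_i\cdot\nabla\psi
+ \sum_{i,k=1}^I\int_{Q_T}\psi\,\partial_i\partial_k\xi(v)\,z_i\cdot\nabla v_k .
\]
The one term that cannot be given a direct meaning is $\sum_i\int\psi\,\partial_i\xi(v)\,d\nu_i$: since $v$ is only $L^2(0,T;H^1)$, it has no trace against a general Radon measure, and this is precisely the quantity kept as a defect. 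The bracketed expression in the conclusion equals, formally, exactly this measure term, which explains the bound $\|\psi\|_\infty(\sup_u|D\xi(u)|)\sum_i|\nu_i|(\overline Q_T)$. The task is to justify this computation rigorously, and the whole difficulty sits in the measure term.

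The plan is to regularise in time. Let $\rho_h$ be a temporal mollifier, write $g^h:=g*_t\rho_h$, and set $v_i^h:=v_i*_t\rho_h$, with the mollification arranged near $t=0$ and $t=T$ so that $v_0$ and $v(T)$ are faithfully reproduced. For fixed $h$ the map $t\mapsto v_i^h(\cdot,t)$ is smooth into $H^1(\Omega)$, with $\partial_t v_i^h = v_i*_t\rho_h'\in L^2(0,T;H^1(\Omega))$ a genuine function; moreover mollifying the hypothesis in time gives, distributionally in $x$, the relation $\partial_t v_i^h=\nu_i^h+w_i^h-\nabla\cdot z_i^h$. Consequently $t\mapsto\int_\Omega\xi(v^h)\psi\,dx$ is absolutely continuous, and the fundamental theorem of calculus together with the temporal chain rule and the spatial chain rule (valid since $v^h\in H^1$ and $D\xi,D^2\xi$ are bounded with compact support) yields
\[
D^h=\sum_{i=1}^I\Big(\int_{Q_T}\psi\,\partial_i\xi(v^h)\,\partial_t v_i^h
-\int_{Q_T}w_i^h\,\psi\,\partial_i\xi(v^h)
-\int_{Q_T}z_i^h\cdot\nabla\big(\psi\,\partial_i\xi(v^h)\big)\Big),
\]
where $D^h$ denotes the bracket of the conclusion with $v,w_i,z_i$ replaced by $v^h,w_i^h,z_i^h$. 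By the mollified equation, the right-hand side is precisely the (a priori only distributionally defined) pairing of $\nu_i^h$ with $\psi\,\partial_i\xi(v^h)$.

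The crux, and the main obstacle, is to bound this pairing by the total variation of $\nu_i^h$. The function $g:=\psi\,\partial_i\xi(v^h)$ lies in $L^2(0,T;H^1(\Omega))$ and is bounded by $\|\psi\|_\infty\sup_u|D\xi(u)|$, but it is not continuous, so it cannot be integrated directly against $\nu_i^h$. I would approximate $g$ by smooth functions $g_n$ on $\overline Q_T$ with $\|g_n\|_\infty\le\|g\|_\infty$ and $g_n\to g$ in $L^2(0,T;H^1(\Omega))$ and a.e.\ (truncation at level $\|g\|_\infty$ followed by mollification, using the Lipschitz boundary). For each smooth $g_n$ the pairing is a genuine integral obeying $|\int_{\overline Q_T}g_n\,d\nu_i^h|\le\|g\|_\infty\,|\nu_i^h|(\overline Q_T)$; and replacing $g$ by $g_n$ in the three explicit integrals above converges as $n\to\infty$, because $\partial_t v_i^h,z_i^h\in L^2$ and $w_i^h\in L^1$ while $g_n\to g$ in $H^1$, a.e., and boundedly. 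Passing to the limit therefore gives $|D^h|\le\|\psi\|_\infty(\sup_u|D\xi(u)|)\sum_i|\nu_i^h|(\overline Q_T)$, and since temporal mollification does not increase total variation, $|\nu_i^h|(\overline Q_T)\le|\nu_i|(\overline Q_T)$.

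It remains to let $h\to0$. Using $v^h\to v$ a.e.\ and in $L^2(0,T;H^1(\Omega)^I)$ (so $\nabla v_k^h\to\nabla v_k$ in $L^2$), $w_i^h\to w_i$ in $L^1(Q_T)$, $z_i^h\to z_i$ in $L^2(Q_T)$, together with $\partial_i\xi(v^h)\to\partial_i\xi(v)$ and $\partial_i\partial_k\xi(v^h)\to\partial_i\partial_k\xi(v)$ boundedly a.e.\ (continuity and boundedness of $D\xi,D^2\xi$), each product in $D^h$ converges in $L^1$, so $D^h\to D$; the endpoint terms $\int_\Omega\xi(v^h(T))\psi(T)\,dx$ and $\int_\Omega\xi(v^h(0))\psi(0)\,dx$ converge to $\int_\Omega\xi(v(T))\psi(T)\,dx$ and $\int_\Omega\xi(v_0)\psi(0)\,dx$ thanks to the endpoint-respecting mollification. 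Passing to the limit in the uniform bound yields $|D|\le\|\psi\|_\infty(\sup_u|D\xi(u)|)\sum_i|\nu_i|(\overline Q_T)$, which is the assertion.
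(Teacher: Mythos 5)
The paper does not actually prove this lemma: it is quoted verbatim from the reference \cite[Lemma 4]{Fis15} and the authors explicitly defer to that paper for the proof, so there is no in-paper argument to compare against. Your strategy --- mollify in time so that the chain rule becomes legitimate, observe that the only term without a direct meaning is the pairing of $\nu_i$ with the discontinuous function $\psi\,\partial_i\xi(v)$, and control that pairing only through its total-variation bound rather than trying to pass to a limit in it --- is the standard route and is, in substance, the argument of the cited reference; the logic ``$A_n=B_n$, $A_n\to A$, $|B_n|\le M$, hence $|A|\le M$'' is exactly the right way to avoid needing convergence of $\int g_n\,d\nu_i^h$. Two points are treated only in passing and would need to be written out: first, the recovery of the endpoint terms, since $v$ is merely $L^2$ in time and $v_0$ merely $L^1$, so one must either use one-sided Steklov averages together with the hypothesis identity itself to show $\xi(v^h(0))\to\xi(v_0)$ and $\xi(v^h(T))\to\xi(v(T))$ in $L^1(\Omega)$ (here the boundedness and global Lipschitz continuity of $\xi$, which follow from $D\xi$ having compact support, are what make this work); second, the construction of the smooth approximants $g_n$ of $\psi\,\partial_i\xi(v^h)$ on the Lipschitz domain with $\|g_n\|_\infty\le\|g\|_\infty$ (or $\le\|g\|_\infty+o(1)$) \emph{and} convergence in $L^2(0,T;H^1(\Omega))$, which requires an extension--truncate--mollify argument near $\partial\Omega$ and is presumably where the constant $C(\Omega)$ in the stated bound originates. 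Neither point is a conceptual gap; the proposal is a correct outline.
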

	We are now ready to prove the existence of global renormalised solutions.
	\begin{proof}[Proof of Theorem \ref{existence-renormalised}]
		By applying Lemma \ref{weak-chain} to \eqref{defect-measure}, we obtain an approximate relation for the weak time derivative of $\xi(\varphi^E(u))$, which leads to the desired equation for $\xi(u)$ when passing to the limit $E \rightarrow \infty$. In detail, we set $v_i \coleq \omega_i^E(u)$, $(v_0)_i \coleq \varphi_i^E(u_0)$, $\nu_i \coleq -\mu_i^E$,
		\begin{equation*}
		z_i\coleq -\sum_{j=1}^{I} d_j m_j \partial_j \omega_i^E(u) u_j^{m_j - 1} \nabla u_j, \quad w_i\coleq \sum_{j=1}^{I} \partial_j \omega_i^E(u) f_j(u),
		\end{equation*}
		and obtain for any smooth function $\xi$ with compactly supported first derivatives
		\begin{equation}\label{final-limit}
			\begin{aligned}
				&\biggl|\int_{\Omega}\xi(\omega^E(u))\psi(T) \, dx -\int_{Q_T}\xi(\omega^E(u))\frac{d}{dt}\psi \, dx \, dt - \int_{\Omega}\xi(\omega^E(u_0))\psi(0) \, dx \\
				&- \sum_{i=1}^{I}\sum_{j=1}^{I}\int_{Q_T}\psi \partial_i\xi(\omega^E(u))\partial_j\omega_i^E(u)f_j(u) \, dx \, dt\\
				&+ \sum_{i=1}^{I}\sum_{j=1}^{I}\int_{Q_T} d_j m_j \partial_i\xi(\omega^E(u)) \partial_j\omega_i^E(u) u_j^{m_j - 1}\nabla u_j\cdot \nabla \psi \, dx \, dt\\
				&+ \sum_{i,k=1}^{I}\sum_{j,\ell=1}^{I}\int_{Q_T} d_j m_j \psi \partial_i\partial_k\xi(\omega^E(u))\partial_j\omega_i^E(u) \partial_{\ell}\omega_k^E(u) u_j^{m_j - 1}\nabla u_j\cdot \nabla u_\ell \, dx \, dt\biggr|\\
				&\leq C(\Omega)\|\psi\|_{\infty}(\sup_{u}|D\xi(u)|)\sum_{i=1}^{I}|\mu_i^E|(\overline{Q}_T).
			\end{aligned}
		\end{equation}		
		We now want to pass to the limit $E\to\infty$ in \eqref{final-limit} to obtain \eqref{renormalised-form}. Note first that due to \eqref{vanish-measure}, we obtain in the limit an equality instead of just an estimate. Since $\xi$ has compactly supported first derivatives, $\partial_j\omega_i^E(u)$ is bounded (see \ref{E5}), and $\omega_i^E(u) \to u_i$ a.e.\ as $E\to\infty$, we directly obtain the following limits for the first line in \eqref{final-limit}:
		\begin{align*}
			\int_{\Omega}\xi(\omega^E(u(T)))\psi(T) \, dx &\longrightarrow \int_{\Omega}\xi(u(T))\psi(T) \, dx,\\
			\int_{Q_T}\xi(\omega^E(u))\frac{d}{dt}\psi \, dx \, dt &\longrightarrow \int_{Q_T}\xi(u)\frac{d}{dt}\psi \, dx \, dt,\\
			\int_{\Omega}\xi(\omega^E(u_0))\psi(0) \, dx &\longrightarrow  \int_{\Omega}\xi(u_0)\psi(0) \, dx.
		\end{align*}
		By employing $m_j u_j^{m_j-1} \nabla u_j = 2 (u_j)^\frac{m_j}{2} \nabla (u_j)^\frac{m_j}{2}$ and $(u_j)^\frac{m_i}{2} \in L^2(0, T; H^1(\Omega))$, we also derive 
		\begin{multline*}
		\sum_{i=1}^{I}\sum_{j=1}^{I}\int_{Q_T} d_j m_j \partial_i\xi(\omega^E(u)) \partial_j\omega_i^E(u) u_j^{m_j-1} \nabla u_j \cdot \nabla \psi \, dx \, dt \\
		\longrightarrow\sum_{i=1}^{I}\int_{Q_T} d_i m_i \partial_i\xi(u) u_i^{m_i-1} \nabla u_i \cdot \nabla \psi \, dx \, dt.
		\end{multline*}
		It remains to ensure the convergence of the third and fourth line of \eqref{final-limit}. To this end, we recall $(u_j)^\frac{m_j}{2} \in L^2(0, T; H^1(\Omega))$, and we utilise the following observation: there exists a constant $E_0>0$ such that for all $E > E_0$ the inequality $\sum_{i=1}^I u_i \geq E_0$ implies $\partial_i \xi(\varphi^E(u)) = \partial_i \xi(u) = 0$ and $\partial_i \partial_k \xi(\varphi^E(u)) = \partial_i \partial_k \xi(u) = 0$ for all $i,k \in \{1,\dotsc,I\}$. The convergence of the third and fourth line above is now a consequence of this auxiliary result, as the derivatives of $\xi$ are zero provided $\max_i u_i$ is larger than $E_0$:
		\begin{align*}
		&\sum_{i=1}^{I}\sum_{j=1}^{I}\int_{Q_T}\psi \partial_i\xi(\omega^E(u))\partial_j\omega_i^E(u)f_j(u) \, dx \, dt \longrightarrow   \sum_{i=1}^{I}\int_{Q_T}\psi \partial_i\xi(u)f_i(u) \, dx \, dt, \\
		&\sum_{i,j,k,\ell=1}^I \int_{Q_T} d_j m_j \psi \partial_i\partial_k\xi(\omega^E(u)) \partial_j\omega_i^E(u) \partial_{\ell}\omega_k^E(u) u_j^{m_j - 1}\nabla u_j \cdot \nabla u_\ell \, dx \, dt \\
		&\qquad\qquad \longrightarrow \sum_{i,k=1}^I \int_{Q_T} d_i m_i \psi \partial_i\partial_k \xi(u) u_i^{m_i - 1} \nabla u_i \cdot \nabla u_k \, dx \, dt.
		\end{align*}
		The previous observation readily follows by choosing $E_0>0$ such that $\supp (D\xi) \subset B_{E_0/\sqrt{I}}(0)$. Given $E > E_0$ and $\sum_{i=1}^I u_i \geq E_0$, one gets $\sum_{i=1}^{I} \varphi_i^E(u) \geq E_0$ from the definition of $\varphi_i^E$ in \eqref{omega}. In particular, $u, \varphi^E(u) \notin B_{E_0/\sqrt{I}}(0)$, which proves the claim.
	\end{proof}
\subsection{Entropy--entropy dissipation law and conservation laws}
We prove in this subsection that any global renormalised solution to \eqref{e0} satisfies the entropy law \eqref{entropy_law} and \eqref{conservation_law}. These relations become helpful in the next section to investigate the convergence to equilibrium of renormalised solutions.

\begin{proof}[Proof of the entropy law \eqref{entropy_law}]
	We will use ideas from \cite[Proposition 5]{Fis17}. Let $M \geq 2$ and $\theta_M: \mathbb R \to \mathbb R $ be a smooth function such that for some $C>0$ sufficiently large,  
	\begin{equation*}
		\theta_M(s) = s \quad \forall s \leq M, \qquad 0\leq \theta_M'(s)\leq 1\quad \forall s\in \mathbb R, \qquad \theta_M'(s) = 0\quad \forall s\geq M^C,
	\end{equation*}
	\begin{equation*}
		\text{and} \quad |\theta_M''(s)| \leq \frac{C}{1+s\log(s+1)} \quad \forall s\geq 0.
	\end{equation*}
	Such a function $\theta_M$ exists for $C>0$ large enough as in this situation 
	\[
	\int_M^{M^C}\frac{1}{1+s\log(s+1)} \, ds \geq 2.
	\]
	
	In the following, we first choose the renormalising function $\xi$ from Definition \ref{def-renormalised} to be 
	\begin{equation*}
		\xi(u)\coleq \theta_M\left(\sum_{i=1}^I (u_i+\varepsilon)(\log(u_i+\varepsilon) + \mu_i - 1)\right),
	\end{equation*}
	then let $\varepsilon\to 0$ and $M\to \infty$ consecutively.	
	Using this $\xi$ in \eqref{renormalised-form} with $\psi \equiv 1$, we derive
	\begin{align}
		&\int_{\Omega}\theta_M\left(\sum_{i=1}^I (u_i(\tau)+\varepsilon)(\log(u_i(\tau)+\varepsilon) + \mu_i - 1)\right) dx\bigg|_{\tau=s}^{\tau=T} \nonumber \\
		&= -\sum_{i=1}^Id_im_i\int_s^T\int_{\Omega}\theta_M'(\cdots)\frac{1}{u_i+\varepsilon}u_i^{m_i-1}|\nabla u_i|^2 \, dx \, dt \nonumber \\
		&- \sum_{i,j=1}^Id_jm_i\int_s^T\int_{\Omega}\theta_M''(\cdots)(\log(u_i+\varepsilon)+\mu_i)(\log(u_j+\varepsilon)+\mu_j)u_i^{m_i-1}\nabla u_i\nabla u_j \, dx \, dt \nonumber \\
		&+\sum_{i=1}^I\int_s^T\int_{\Omega}\theta_M'(\cdots)(\log(u_i+\varepsilon)+\mu_i)f_i(u) \, dx \, dt. \label{b1}
	\end{align}	
	Here we write $(\cdots)\coleq \sum_{i=1}^I(u_i+\varepsilon)(\log(u_i+\varepsilon) + \mu_i - 1)$ for simplicity. In the limit $\varepsilon\to 0$, the convergence of the left hand side of \eqref{b1} is immediate by the Lebesgue dominated convergence theorem. By rewriting
	\begin{equation*}
		\frac{u_i^{m_i-1}}{u_i+\varepsilon}|\nabla u_i|^2 = \frac{4}{m_i^2}\frac{u_i}{u_i+\varepsilon}\left|\nabla u_i^{\frac{m_i}{2}}\right|^2,
	\end{equation*}
	and
	\begin{equation*}
		u_i^{m_i-1}\nabla u_i\nabla u_j = \frac{4}{m_im_j} u_i^{\frac{m_i}{2}}u_j^{1-\frac{m_j}{2}}\nabla u_i^{\frac{m_i}{2}} \nabla u_j^{\frac{m_j}{2}},
	\end{equation*}
	we can use the fact that $\nabla u_i^{\frac{m_i}{2}}$ is bounded in $L^2(Q_T)$ and $\theta_M'(s) = \theta_M''(s) = 0$ for $s\geq M^C$ to pass to the limit $\varepsilon\to 0$ for the first and second terms on the right hand side of \eqref{b1}. Concerning the last term, we observe that
	\begin{equation*}
		\theta_M'(\cdots) f_i(u)\log (u_i+\varepsilon) \leq C(M)u_i|\log (u_i+\varepsilon)|
	\end{equation*} 
	thanks to the Lipschitz continuity of $f_i$ and the fact that $\theta_M'(s) = 0$ for $s\geq M^C$. Note that $\{u_i|\log (u_i+\varepsilon)|\}_{\eps>0}$ is bounded uniformly (in $\varepsilon$) in $L^1(Q_T)$ due to Lemma \ref{a.e.-limit}. We now use the assumption \eqref{entropy-inequality} and Fatou's Lemma to get
	\begin{align*}
		\limsup_{\varepsilon\to 0}\sum_{i=1}^I\int_s^T\int_{\Omega}\theta_M'(\cdots)(\log(u_i+\varepsilon)+\mu_i)f_i(u) \, dx \, dt\\
		\leq \sum_{i=1}^I\int_s^T\int_{\Omega}\theta_M'(\cdots)(\log u_i + \mu_i)f_i(u) \, dx \, dt.
	\end{align*}
	Therefore, by letting $\varepsilon\to 0$ in \eqref{b1}, we obtain
	\begin{equation}\label{b2}
	\begin{aligned}
		&\int_{\Omega}\theta_M\left(\sum_{i=1}^I u_i(\tau)(\log u_i(\tau) + \mu_i - 1)\right) dx\bigg|_{\tau=s}^{\tau=T}\\
		&\leq -\sum_{i=1}^Id_im_i\int_s^T\int_{\Omega}\theta_M'(\cdots)u_i^{m_i-2}|\nabla u_i|^2 \, dx \, dt\\
		&- \sum_{i,j=1}^Id_jm_i\int_s^T\int_{\Omega}\theta_M''(\cdots)(\log u_i+\mu_i)(\log u_j+\mu_j)u_i^{m_i-1}\nabla u_i\nabla u_j \, dx \, dt\\
		&+\sum_{i=1}^I\int_s^T\int_{\Omega}\theta_M'(\cdots)(\log u_i+\mu_i)f_i(u) \, dx \, dt.
	\end{aligned}	
	\end{equation}
	Note that here $(\cdots)$ stands for $\sum_{i=1}^I u_i(\log u_i+ \mu_i - 1)$. We now pass to the limit $M\to \infty$. The convergence of the left hand side of \eqref{b2} follows from Fatou's Lemma and the dominated convergence theorem thanks to the fact that $u_i\log u_i \in L^\infty(0,T;L^1(\Omega))$. Using $\nabla (u_i)^{\frac{m_i}{2}}\in L^2(Q_T)$ and $u_i^{m_i-2}|\nabla u_i|^2 = \frac{4}{m_i^2} \big|\nabla (u_i)^{\frac{m_i}{2}} \big|^2$, the convergence of the first term on the right hand side of \eqref{b2} follows via dominated convergence. The reaction term can also be dealt by Fatou's Lemma, the entropy dissipation assumption \eqref{entropy-inequality} and the fact that $\theta_M' \geq 0$. We are left to consider the second term on the right hand side of \eqref{b2}. It is straightforward that the integrand converges pointwise to zero as $M\to \infty$, since $\theta_M$ is an approximation of the identity. By the fact that $\nabla (u_i)^{\frac{m_i}{2}}\in L^2(Q_T)$ and rewriting
	\begin{multline*}
		\theta_M''(\cdots)(\log u_i+\mu_i)(\log u_j+\mu_j)u_i^{m_i-1}\nabla u_i\nabla u_j\\
		= \frac{4}{m_im_j}\theta_M''(\cdots)(\log u_i+\mu_i)(\log u_j+\mu_j)u_i^{\frac{m_i}{2}}u_j^{1-\frac{m_j}{2}}\nabla u_i^{\frac{m_i}{2}}\nabla u_j^{\frac{m_j}{2}},
	\end{multline*}
	it is sufficient to show that $\theta_M''(\cdots)(\log u_i+\mu_i)(\log u_j+\mu_j)u_i^{\frac{m_i}{2}}u_j^{1-\frac{m_j}{2}}$ is bounded uniformly in $M$. Indeed, using the assumption on $\theta_M$ as well as $0 < \frac{m_i}{2}$ and $1-\frac{m_j}{2} < 1$, we deduce 
	\begin{multline*}
		\left|\theta_M''(\cdots)(\log u_i+\mu_i)(\log u_j+\mu_j)u_i^{\frac{m_i}{2}}u_j^{1-\frac{m_j}{2}}\right| \\ 
		\leq C \left( 1+\sum\limits_{i=1}^Iu_i(\log(u_i+1))^2 \right)^{-1} \left|(\log u_i+\mu_i)(\log u_j+\mu_j)u_i^{\frac{m_i}{2}}u_j^{1-\frac{m_j}{2}}\right| \leq C,
	\end{multline*}
	In conclusion, by letting $M\to \infty$ in \eqref{b2}, we obtain the desired inequality \eqref{entropy_law}.
\end{proof}

\begin{proof}[Proof of the conservation law \eqref{conservation_law}]
Let $\varrho \in \mathbb R$ be arbitrary. Using the renormalisation 
\begin{equation*}
\xi(u)\coleq \theta_M\left(\varrho \sum_{i=1}^Iq_iu_i + \sum_{i=1}^I(u_i+\varepsilon)(\log(u_i+\varepsilon)+\mu_i-1) \right)
\end{equation*}
%as the renormalising function 
and $\psi \equiv 1$ %as the test function 
in \eqref{renormalised-form}, we can proceed similarly to the previous proof of the entropy law \eqref{entropy_law}, now with the additional information that $\sum_{i=1}^{I}\varrho \theta_M'(\cdots) q_if_i(u) = 0$, to obtain
\begin{align*}
	&\sum_{i=1}^I\int_{\Omega}\varrho q_i u_i(\tau) + u_i(\tau)(\log u_i(\tau) + \mu_i - 1) \, dx \bigg|_{\tau = 0}^{\tau = T}\\
	&\qquad \leq -\frac{4}{m_i^2}\sum_{i=1}^I\int_0^T\int_\Omega d_i|\nabla (u_i)^{m_i/2}|^2 \, dx \, dt + \sum_{i=1}^I\int_0^T\int_{\Omega}f_i(u)(\log u_i + \mu_i) \, dx \, dt.
\end{align*}
For $\varrho > 0$, we divide both sides by $\varrho$ and let $\varrho \to +\infty$ to get
\begin{equation*}
	\sum_{i=1}^I\int_{\Omega}q_iu_i(\tau) \, dx\bigg|_{\tau=0}^{\tau = T} \leq 0.
\end{equation*}
Repeating the argument for $\varrho < 0$, we obtain the reverse inequality and, therefore, the desired conservation law \eqref{conservation_law}.
\end{proof}

\section{Exponential convergence to equilibrium}\label{sec:convergence}
	\subsection{Entropy--entropy dissipation inequality}
	Without loss of generality, we assume that the domain $\Omega$ has unit volume, i.e. $|\Omega| = 1$. We employ the following notation: %Denote by
	\begin{equation*}
		\overline{u_i} \coleq \int_{\Omega}u_i \, dx \quad \text{ and } \quad \overline{u} \coleq (\overline{u_i})_{i=1,\ldots, I}.
	\end{equation*}
	To show the convergence to equilibrium for \eqref{mass-action-system}, we exploit the so-called entropy method. 
%	\begin{lemma}\cite[Proposition 2.1]{DFT17}\label{entropy-diss}
%		Assume that \eqref{mass-action-system} is complex balanced and denote by $u_{\infty} = (u_{1,\infty}, \ldots, u_{I,\infty})$ the unique complex balanced equilibrium in the compatibility class corresponding to $M = \mathbb Q\,\int_{\Omega}u_0 \, dx$. Then we have for all $u\in \mathbb R^I_{+}$s
%	\end{lemma}
%	Since \eqref{F1}--\eqref{F3} are satisfied, Theorem \ref{existence-renormalised} implies
%	\begin{theorem}\label{existence-complex-balanced}
%		Assume that \eqref{mass-action-system} is complex balanced. Then for any non-negative initial data $u_0\in L^1(\Omega)^I$ satisfying $\sum_{i=1}^{I}\int_{\Omega}u_{i,0}\log u_{i,0} \, dx <+\infty$, there exists a global non-negative renormalised solution to \eqref{mass-action-system} in the sense of Definition \ref{def-renormalised}.
%	\end{theorem}
%
%	System \eqref{e0} possesses a relative entropy functional (as a Lyapunov functional)
	Assuming the complex balanced condition, system \eqref{mass-action-system} possesses the relative entropy functional 
	\begin{equation}\label{relative-entropy}
	\mathcal{E}[u|u_{\infty}] = \sum_{i=1}^{I}\int_{\Omega}\left(u_i\log\frac{u_i}{u_{i,\infty}}-u_{i}+u_{i,\infty}\right) dx
	\end{equation}
	and the corresponding entropy dissipation function 
	\begin{equation}\label{entropy_dissipation}
	\mathcal{D}[u] = \sum_{i=1}^{I}d_im_i\int_{\Omega}u_i^{m_i - 2}|\nabla u_i|^2 \, dx + \sum_{r=1}^{R}k_ru_{\infty}^{y_r}\int_{\Omega}\Psi\left(\frac{u^{y_r}}{u_{\infty}^{y_r}};\frac{u^{y_r'}}{u_{\infty}^{y_r'}}\right) dx
	\end{equation}
	with $\Psi(x;y) = x\log(x/y) - x + y$ (see \cite[Proposition 2.1]{DFT17} for a derivation of $\mathcal D[u]$). \textit{Formally} we have
	\begin{equation*}
		\mathcal{D}[u] = -\frac{d}{dt}\mathcal{E}[u|u_\infty]
	\end{equation*}
	along the trajectory of \eqref{mass-action-system}. 
%	The weak entropy entropy-dissipation law of renormalised solution imply a uniform in time bound of $L^1$ norm of the solution.
	The following lemma shows that the $L^1$ norm can be bounded by the relative entropy.
	\begin{lemma}\label{L1-bound}
		Assume that $\mathcal{E}[u|u_{\infty}]<+\infty$. Then 
		\begin{equation*}
		\|u_i\|_{L^1(\Omega)} = \overline{u_i} \leq \widetilde{K} \coleq 2\left(\mathcal{E}(u|u_{\infty}) + \sum_{i=1}^{I}u_{i,\infty}\right)
		\end{equation*}
		for $i=1,\ldots, I$.
	\end{lemma}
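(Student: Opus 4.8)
The plan is to bound each spatial average $\overline{u_i}$ directly by the relative entropy, using two ingredients: the non-negativity of each individual summand of $\mathcal{E}[u|u_{\infty}]$, and an elementary pointwise inequality controlling $s$ in terms of the function $\Phi_a(s) = s\log(s/a) - s + a$ which appears in the definition \eqref{relative-entropy}.

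First I would record that for every $a > 0$ the map $s \mapsto \Phi_a(s) = s\log(s/a) - s + a$ is non-negative on $[0,\infty)$, since it is convex with a unique minimum at $s = a$ where $\Phi_a(a) = 0$. Consequently, each term $\int_{\Omega}\Phi_{u_{i,\infty}}(u_i)\,dx$ in \eqref{relative-entropy} is non-negative, so that for every fixed index $i$,
\[
\int_{\Omega}\left(u_i\log\frac{u_i}{u_{i,\infty}} - u_i + u_{i,\infty}\right) dx \leq \mathcal{E}[u|u_{\infty}].
\]
Next I would establish the elementary pointwise inequality
\[
s \leq 2\left(s\log\frac{s}{a} - s + a\right) + 2a \qquad \text{for all } s \geq 0,\ a > 0.
\]
After rearranging, this reduces to showing $3s - 2s\log(s/a) \leq 4a$; maximising the left-hand side over $s$ gives the optimiser $s = a\,e^{1/2}$ with value $2e^{1/2}a$, and the claim follows from $2e^{1/2} < 4$.

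Applying this inequality pointwise with $a = u_{i,\infty}$ and $s = u_i(x)$, integrating over $\Omega$, and using the normalisation $|\Omega| = 1$, I would then obtain
\[
\overline{u_i} = \int_{\Omega}u_i\,dx \leq 2\int_{\Omega}\left(u_i\log\frac{u_i}{u_{i,\infty}} - u_i + u_{i,\infty}\right) dx + 2u_{i,\infty} \leq 2\,\mathcal{E}[u|u_{\infty}] + 2\sum_{j=1}^{I}u_{j,\infty},
\]
where the last step combines the component-wise bound from the first ingredient with $u_{i,\infty} \leq \sum_{j=1}^{I}u_{j,\infty}$. This is precisely the asserted estimate $\overline{u_i} \leq \widetilde{K} = 2\big(\mathcal{E}(u|u_{\infty}) + \sum_{i=1}^{I}u_{i,\infty}\big)$.

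I do not expect any genuine obstacle here, as the argument is entirely elementary; the only point requiring a little care is the constant in the pointwise inequality, which must be chosen so that $2e^{1/2}$ does not exceed $4$. If a fully self-contained verification of the pointwise inequality were preferred over the optimisation step, one could instead split into the regimes $s \leq a$ and $s > a$ and invoke convexity, but the calculus argument above is the most direct.
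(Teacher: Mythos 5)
Your proof is correct and essentially the same as the paper's: your pointwise inequality $s \leq 2\bigl(s\log(s/a) - s + a\bigr) + 2a$ is exactly the paper's $\tfrac12 x - y \leq x\log(x/y) - x + y$, which the paper derives via $x\log(x/y)-x+y \geq (\sqrt{x}-\sqrt{y})^2 \geq \tfrac12 x - y$ rather than by your calculus optimisation. The only cosmetic difference is that the paper sums the inequality over all $i$ before using non-negativity to isolate a single component, whereas you discard the other non-negative entropy summands first; both give the same constant $\widetilde{K}$.
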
	
	\begin{proof}
		The elementary inequalities $x\log(x/y) - x + y \geq (\sqrt{x} - \sqrt{y})^2 \geq \frac 12 x - y$ ensure that 
		\begin{equation*}
		\mathcal{E}[u|u_{\infty}]  = \sum_{i=1}^{I}\int_{\Omega}\left(u_i\log\frac{u_i}{u_{i,\infty}} - u_i + u_{i,\infty}\right) dx \geq \frac 12 \sum_{i=1}^{I}\int_{\Omega}u_i \, dx - \sum_{i=1}^{I}u_{i,\infty}.
		\end{equation*}
		Taking the non-negativity of the solution $u$ into account allows to conclude.
	\end{proof}

\begin{lemma}[Csisz\'ar--Kullback--Pinsker type inequality]\cite[Lemma 2.3]{FT17}\label{CKP}
	Fix a non-negative mass $M \in \mathbb R_{+}^m$. For all measurable functions $u: \Omega \to \mathbb R_+^I$ satisfying the mass conservation $\mathbb Q\, \overline{u} = M$, the following inequality holds,
	\begin{equation*}
	\mathcal{E}[u|u_{\infty}] \geq C_{CKP}\sum_{i=1}^{I}\|u_i - u_{i,\infty}\|_{L^1(\Omega)}^2,
	\end{equation*}
	in which $C_{CKP} > 0$ is a constant depending only on $\Omega$ and $u_{\infty}$.
\end{lemma}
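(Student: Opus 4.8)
The plan is to reduce the inequality to two decoupled estimates by splitting off the spatial average $\overline{u} = (\overline{u_i})_i$. Using $|\Omega| = 1$ and $\int_\Omega u_i \, dx = \overline{u_i}$, a direct computation gives the additivity of the relative entropy \eqref{relative-entropy},
\[
\mathcal{E}[u|u_\infty] = \mathcal{E}[u|\overline{u}] + \mathcal{E}[\overline{u}|u_\infty], \qquad \mathcal{E}[u|\overline{u}] = \sum_{i=1}^I \int_\Omega \Psi(u_i; \overline{u_i}) \, dx, \quad \mathcal{E}[\overline{u}|u_\infty] = \sum_{i=1}^I \Psi(\overline{u_i}; u_{i,\infty}),
\]
where $\Psi(x;y) = x\log(x/y) - x + y$. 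The first summand is a purely spatial relative entropy (with the mean as reference), while the second is a finite-dimensional quantity. Correspondingly I would estimate the target through the triangle inequality $\|u_i - u_{i,\infty}\|_{L^1(\Omega)} \leq \|u_i - \overline{u_i}\|_{L^1(\Omega)} + |\overline{u_i} - u_{i,\infty}|$ together with $(a+b)^2 \le 2a^2 + 2b^2$, so that it suffices to control $\|u_i - \overline{u_i}\|_{L^1(\Omega)}^2$ by $\mathcal{E}[u|\overline{u}]$ and $|\overline{u_i} - u_{i,\infty}|^2$ by $\mathcal{E}[\overline{u}|u_\infty]$.

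For the spatial term I would apply the classical Pinsker inequality componentwise: writing $p_i = u_i/\overline{u_i}$, which is a probability density on $\Omega$, one has
\[
\int_\Omega \Psi(u_i; \overline{u_i}) \, dx = \overline{u_i} \int_\Omega p_i \log p_i \, dx \geq \frac{\overline{u_i}}{2} \Big( \int_\Omega |p_i - 1| \, dx \Big)^2 = \frac{1}{2\overline{u_i}} \|u_i - \overline{u_i}\|_{L^1(\Omega)}^2 .
\]
The prefactor $1/\overline{u_i}$ is precisely where the mass constraint enters: since $u$ is restricted to the compatibility class $\{v \geq 0 : \mathbb{Q}\,\overline{v} = M\}$, the averages $\overline{u_i}$ are bounded by a constant $K_M$ depending only on $M$ (equivalently on $u_\infty$); this is the role of a bound such as the one in Lemma \ref{L1-bound}, strengthened to be uniform over the class. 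Hence $\mathcal{E}[u|\overline{u}] \geq \tfrac{1}{2K_M} \sum_i \|u_i - \overline{u_i}\|_{L^1(\Omega)}^2$.

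For the finite-dimensional term the key observation is that it decouples into scalar contributions $\Psi(\overline{u_i}; u_{i,\infty})$, so it suffices to prove the one-dimensional inequality $\Psi(a; b) \geq c\,(a-b)^2$ for $a \in [0, K_M]$ and $b = u_{i,\infty}$. This follows because $a \mapsto \Psi(a;b)/(a-b)^2$ extends continuously and strictly positively across $a = b$ (with value $\tfrac{1}{2b}$ there) and stays positive on the compact interval $[0, K_M]$, so its infimum $c>0$ furnishes the bound. The genuinely essential ingredient — and the only real obstacle — is therefore the boundedness of the compatibility class providing the uniform $K_M$: this bound is indispensable, for were the averages allowed to grow unboundedly one would have $\mathcal{E}[u|u_\infty]$ growing only like $\overline{u_i}\log\overline{u_i}$ while $\sum_i\|u_i - u_{i,\infty}\|_{L^1(\Omega)}^2$ grows like $\overline{u_i}^2$, so that no uniform constant could exist. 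Granting the uniform bound, combining the two estimates via $\|u_i - u_{i,\infty}\|_{L^1(\Omega)}^2 \leq 2\|u_i - \overline{u_i}\|_{L^1(\Omega)}^2 + 2|\overline{u_i} - u_{i,\infty}|^2$ and collecting the constants yields $\mathcal{E}[u|u_\infty] \geq C_{CKP} \sum_i \|u_i - u_{i,\infty}\|_{L^1(\Omega)}^2$ with $C_{CKP}$ depending only on $u_\infty$ (and on $\Omega$ through $|\Omega|=1$).
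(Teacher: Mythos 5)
The paper does not prove this lemma --- it is quoted from \cite[Lemma 2.3]{FT17} --- so the comparison is with the standard argument, which is exactly the one you follow: split the entropy via $\mathcal{E}[u|u_\infty]=\mathcal{E}[u|\overline{u}]+\mathcal{E}[\overline{u}|u_\infty]$, apply the classical Pinsker inequality to the probability densities $u_i/\overline{u_i}$, and treat the finite-dimensional remainder $\sum_i\Psi(\overline{u_i};u_{i,\infty})$ by an elementary scalar estimate. All of these steps are correct as you present them (the additivity identity, the constant $\tfrac{1}{2\overline{u_i}}$, and the positivity of $a\mapsto\Psi(a;b)/(a-b)^2$ on a compact interval after removing the singularity at $a=b$).

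The one genuine gap is the uniform bound $\overline{u_i}\le K_M$ with $K_M$ depending only on $M$, which you correctly identify as indispensable but do not actually establish. This bound is \emph{not} supplied by Lemma \ref{L1-bound}: that lemma gives $\overline{u_i}\le 2\big(\mathcal{E}[u|u_\infty]+\sum_j u_{j,\infty}\big)$, i.e.\ a bound in terms of the very quantity you are trying to estimate from below, so invoking it here is circular (it would only yield a constant $C_{CKP}$ depending on an a priori entropy bound, not merely on $\Omega$ and $u_\infty$). What is really needed is that the compatibility class $\{v\in\mathbb R_+^I:\ \mathbb Q v=M\}$ is bounded, which holds if and only if $\ker\mathbb Q\cap\mathbb R_+^I=\{0\}$, i.e.\ if and only if the stoichiometric subspace $\mathrm{span}\{y_r'-y_r\}$ meets $\mathbb R_+^I$ only at the origin (equivalently, the network admits a strictly positive conservation law). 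This is a structural hypothesis on the network, not a consequence of anything proved in this paper, and without it the inequality with a constant depending only on $\Omega$ and $u_\infty$ is false by exactly the counterexample you sketch (constant states $u\equiv v$ with $|v|\to\infty$ inside the class). So you should either state and use this conservativity assumption explicitly, or weaken the conclusion to a constant depending additionally on an upper bound for $\mathcal{E}[u|u_\infty]$ --- the latter version is what Lemma \ref{L1-bound} delivers and is sufficient for every application in the paper, since the entropy is non-increasing along renormalised solutions by \eqref{entropy_law}.
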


The following two lemmas provide generalisations of the classical logarithmic Sobolev inequality, which are suited for non-linear diffusion.
\begin{lemma}[$L^m$-logarithmic Sobolev inequality]\label{Lp-LSI}\cite{DGGW08}
	For any $m \geq 1$, there exists a constant $C(\Omega,m)>0$ such that we have
	\begin{equation*}
	\int_{\Omega}\left|\nabla\left(u^{\frac{m}{2}} \right)\right|^2 dx \geq  C(\Omega,m)\left(\int_{\Omega}u\log\frac{u}{\overline{u}} \, dx\right)^{m}.
	\end{equation*}
\end{lemma}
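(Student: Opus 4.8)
The plan is to deduce the inequality from the classical (linear) logarithmic Sobolev inequality applied to $u^{m/2}$, and then to trade the resulting ``$m$-entropy'' for the $m$-th power of the physical entropy. Throughout write $\mathrm{Ent}(w) \coleq \int_\Omega w\log(w/\overline w)\,dx \geq 0$ for nonnegative $w$, with $\overline w \coleq \int_\Omega w\,dx$ (recall $|\Omega|=1$), so the claim reads $\int_\Omega |\na(u^{m/2})|^2\,dx \geq C(\mathrm{Ent}(u))^m$. First I would invoke the classical logarithmic Sobolev inequality in entropy form on the bounded Lipschitz domain $\Omega$ --- equivalently, the present lemma at the exponent $m=1$, which is standard and, on a bounded domain possessing a Poincaré (spectral gap) inequality, holds in the tight form without additive correction (Rothaus). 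Applied to $w = u^m$, for which $|\na w|^2/w = 4|\na(u^{m/2})|^2$, it gives
\[
\int_\Omega |\na(u^{m/2})|^2\,dx \ \geq\ c(\Omega)\,\mathrm{Ent}(u^m).
\]
It therefore remains to establish the scale-invariant algebraic comparison $\mathrm{Ent}(u^m) \geq c'(\mathrm{Ent}(u))^m$.

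Since this comparison is homogeneous of degree $m$ under $u \mapsto \lambda u$, I may normalise $\int_\Omega u^m\,dx = 1$ and set $g \coleq u^m$, $t \coleq 1/m \in (0,1]$; the comparison becomes $\mathrm{Ent}(g^t) \leq C_t(\mathrm{Ent}(g))^t$ for nonnegative $g$ with $\overline g = 1$, and as these functionals depend only on the distribution of the values of $g$, it is a one-dimensional statement. Writing $\Phi(s) \coleq s\log s - s + 1 \geq 0$ and $B \coleq \int_\Omega g^t\,dx \leq \overline g^{\,t} = 1$ (Jensen), one has the exact identity $\mathrm{Ent}(g^t) = \int_\Omega \Phi(g^t)\,dx - \Phi(B) \leq \int_\Omega \Phi(g^t)\,dx$ and likewise $\mathrm{Ent}(g) = \int_\Omega \Phi(g)\,dx =: D$. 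The key pointwise fact is the monotonicity $\frac{d}{dt}\Phi(s^t) = t\,s^t(\log s)^2 \geq 0$, whence $\Phi(s^t) \leq \Phi(s)$ for all $s \geq 0$ and $t \leq 1$; integrating yields $\mathrm{Ent}(g^t) \leq D$ with no loss. This already settles the \emph{near-equilibrium} regime $D \leq 1$, because there $D \leq D^t$.

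The hard part is the \emph{large-deviation} regime $D \geq 1$, where $\mathrm{Ent}(g^t) \leq D$ is the wrong power and one must genuinely exploit the heavy tail of $g$. Here I would discard the nonpositive contributions, estimate $-B\log B \leq e^{-1} \leq e^{-1}D^t$, and on the superlevel set $\{g \geq 1\}$ use the factorisation $g^t\log g = (g\log g)^t(\log g)^{1-t}$ together with Hölder's inequality with exponents $1/t$ and $1/(1-t)$:
\[
\int_{\{g\geq 1\}} g^t\log g\,dx \ \leq\ \Big(\int_{\{g\geq 1\}} g\log g\,dx\Big)^t \Big(\int_{\{g\geq 1\}} \log g\,dx\Big)^{1-t}.
\]
The first factor is $\leq D + e^{-1} \leq (1+e^{-1})D$ (the sublevel part is absorbed via $-s\log s \leq e^{-1}$ on $(0,1)$), and the second is $\leq \int_{\{g\geq 1\}}(g-1)\,dx \leq \overline g = 1$; this produces $\mathrm{Ent}(g^t) \leq C_t D^t$. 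Assembling the two regimes gives the comparison with an explicit $C_t = C(m)$, and combining with the linear LSI yields the claim with $C(\Omega,m) = c(\Omega)/C(m)^m$. The only genuinely delicate points I expect are verifying the tight (additive-term-free) form of the linear LSI on a mere Lipschitz domain and keeping the comparison constant uniform across all value-distributions of $u$ --- handled, respectively, by the Poincaré inequality and by the homogeneity normalisation $\overline{u^m}=1$.
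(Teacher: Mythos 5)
Your argument is correct, and it checks out line by line: the scaling normalisation $\overline{u^m}=1$ is legitimate since both sides of the comparison are homogeneous of degree $m$; the identity $\mathrm{Ent}(g^t)=\int_\Omega\Phi(g^t)\,dx-\Phi(B)$ and the monotonicity $\tfrac{d}{dt}\Phi(s^t)=t\,s^t(\log s)^2\ge 0$ settle the regime $\mathrm{Ent}(g)\le 1$; and in the regime $\mathrm{Ent}(g)\ge 1$ the factorisation $g^t\log g=(g\log g)^t(\log g)^{1-t}$ on $\{g\ge1\}$, H\"older with exponents $1/t$ and $1/(1-t)$, the bound $\int_{\{g\ge1\}}\log g\le\int_{\{g\ge1\}}(g-1)\le\overline g=1$, and $-B\log B\le e^{-1}$ give $\mathrm{Ent}(g^t)\le C_t(\mathrm{Ent}(g))^t$ with $C_t$ depending only on $t=1/m$. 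Be aware, however, that the paper does not prove this lemma at all: it is quoted from \cite{DGGW08}, where $L^q$-logarithmic Sobolev inequalities are derived by entirely different means (entropy methods for weighted porous-medium flows). Your route -- the tight linear LSI applied to $w=u^m$ (so that $\int|\nabla\sqrt w|^2=\int|\nabla u^{m/2}|^2$) combined with the elementary distributional comparison $\mathrm{Ent}(u^m)\ge c(m)(\mathrm{Ent}(u))^m$ -- is therefore a self-contained and arguably more elementary alternative; what it buys is transparency and an explicit constant $C(\Omega,m)=c(\Omega)C_{1/m}^{-m}$, at the price of invoking the additive-term-free $m=1$ inequality on a bounded Lipschitz domain, which, as you correctly note, follows from the defective LSI (Sobolev embedding) plus the Poincar\'e inequality via Rothaus' tightening argument and is exactly the form of the LSI used elsewhere in this literature (e.g.\ in \cite{FT18}). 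The only point worth making explicit in a written version is that the $m=1$ case is taken as known, so your proof reduces the lemma to that classical statement rather than reproving it.
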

\begin{lemma}[Generalised logarithmic Sobolev inequality]\label{Generalised-LSI}\cite{MM16} For any $m> (d-2)_+/d$, there exists a constant $C(\Omega, m)>0$ such that
	\begin{equation*}
	\int_{\Omega}u^{m-2}|\nabla u|^2 \, dx \geq C(\Omega,m)\overline{u}^{m-1}\int_{\Omega}u\log{\frac{u}{\overline{u}}} \, dx.
	\end{equation*}
\end{lemma}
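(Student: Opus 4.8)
The plan is to pass to an equivalent scale-invariant form and then argue by compactness. Since $u^{m-2}|\nabla u|^2 = \tfrac{4}{m^2}|\nabla u^{m/2}|^2$ and both sides of the claimed inequality are homogeneous of degree $m$ under $u \mapsto \lambda u$ (the left side because $\int_\Omega u^{m-2}|\nabla u|^2$ scales like $\lambda^m$, the right side because $\overline{u}^{m-1}\int_\Omega u\log(u/\overline u)$ scales like $\lambda^{m-1}\cdot\lambda=\lambda^m$), I would first rescale to the case $\overline{u}=1$. Writing $v\coleq u$ with $\int_\Omega v\,dx=1$ and $w\coleq v^{m/2}\in H^1(\Omega)$, the statement reduces to finding $c>0$ with
\begin{equation*}
	\int_\Omega |\nabla w|^2\,dx \geq c\int_\Omega v\log v\,dx\qquad\text{for all } v\geq 0,\ \textstyle\int_\Omega v\,dx=1,
\end{equation*}
where $\int_\Omega v\log v = \int_\Omega v\log(v/\overline v)\geq 0$ by Jensen's inequality.

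The crucial ingredient---and the place where the restriction $m>(d-2)_+/d$ enters---is an auxiliary coercivity estimate bounding the entropy by the Dirichlet energy of $w$. Since $s\log s\leq C_\sigma (s^{1+\sigma}+1)$ for every $\sigma>0$, it suffices to control $\int_\Omega v^{1+\sigma} = \|w\|_{L^{2(1+\sigma)/m}}^{2(1+\sigma)/m}$. The constraint gives $\|w\|_{L^{2/m}}=1$, and the Gagliardo--Nirenberg--Sobolev inequality on the bounded Lipschitz domain $\Omega$ yields $\|w\|_{L^r}\leq C\|\nabla w\|_{L^2}^\theta\|w\|_{L^{2/m}}^{1-\theta}+C\|w\|_{L^{2/m}}$ for $r\in[2/m,2^\ast]$ with a small exponent $\theta$. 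The admissible range is non-trivial precisely when $2/m<2^\ast=\tfrac{2d}{d-2}$, i.e.\ when $m>(d-2)/d$ (for $d\leq 2$ any $m>0$ works, as $2^\ast=\infty$). Choosing $\sigma>0$ small enough that $r=2(1+\sigma)/m<2^\ast$, the corresponding $\theta$ satisfies $r\theta\leq 2$, and Young's inequality upgrades the bound to
\begin{equation}\label{aux-coercivity}
	\int_\Omega v\log v\,dx \leq C_1\int_\Omega|\nabla w|^2\,dx + C_2,\qquad \int_\Omega v^{1+\sigma}\,dx\leq C_1\int_\Omega|\nabla w|^2\,dx+C_2,
\end{equation}
uniformly over all admissible $v$.

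With \eqref{aux-coercivity} at hand I would split the argument into two regimes. For large entropy, $\int_\Omega v\log v\geq\delta_0$, a compactness argument shows $c_1\coleq\inf\{\int_\Omega|\nabla w|^2: \overline v=1,\ \int_\Omega v\log v\geq\delta_0\}>0$: a minimising sequence $w_n$ has $\|\nabla w_n\|_{L^2}\to 0$ and, by \eqref{aux-coercivity}, bounded entropy and bounded $L^{1+\sigma}$ norm; since $\{w_n\}$ is then bounded in $H^1(\Omega)$ (directly when $m\leq 1$ because $L^{2/m}\hookrightarrow L^2$, and via Gagliardo--Nirenberg when $m>1$), Rellich compactness forces $w_n\to c$ for some constant $c$, the constraint $\int_\Omega w_n^{2/m}=1$ gives $c=1$, and uniform integrability of $\{v_n\}$ (from the $L^{1+\sigma}$ bound) yields $\int_\Omega v_n\log v_n\to 0$, contradicting $\geq\delta_0$. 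Combining $\int_\Omega|\nabla w|^2\geq c_1$ with \eqref{aux-coercivity} then produces the desired linear bound on this regime. For small entropy, $\int_\Omega v\log v<\delta_0$, I would linearise around the constant state: writing $v=1+\psi$ with $\int_\Omega\psi=0$, one has $\int_\Omega v\log v = \tfrac12\int_\Omega\psi^2+o(\|\psi\|^2)$ and $\int_\Omega|\nabla w|^2 = \tfrac{m^2}{4}\int_\Omega|\nabla\psi|^2+o(\|\psi\|_{H^1}^2)$, so the inequality degenerates to the Poincaré--Wirtinger inequality with the first non-trivial Neumann eigenvalue. Taking the minimum of the two resulting constants gives the claim.

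I expect the main obstacle to be the large-entropy regime: a priori the mass of $v$ could concentrate, sending $\int_\Omega v\log v$ to infinity while $\int_\Omega|\nabla w|^2$ stays bounded, which would break the compactness step. Ruling this out is exactly the role of \eqref{aux-coercivity}, whose validity hinges on the Sobolev embedding $H^1(\Omega)\hookrightarrow L^{2/m}(\Omega)$ being available with room to spare, i.e.\ on the threshold $m>(d-2)_+/d$. For $m\geq 1$ one could alternatively extract the large-entropy bound from the $L^m$-logarithmic Sobolev inequality of Lemma \ref{Lp-LSI}, since $(\int_\Omega v\log v)^m\geq\int_\Omega v\log v$ once $\int_\Omega v\log v\geq 1$; but the compactness route has the advantage of covering the full range $m>(d-2)_+/d$, including $m<1$, in one stroke.
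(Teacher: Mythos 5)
First, a remark on the comparison itself: the paper does not prove this lemma at all --- it is imported verbatim from \cite{MM16} --- so there is no in-paper argument to measure your proposal against; what follows is an assessment of your argument on its own terms.

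Your scaling reduction to $\overline{u}=1$, the auxiliary coercivity estimate $\int_\Omega v\log v\,dx\le C_1\int_\Omega|\nabla w|^2\,dx+C_2$ obtained from Gagliardo--Nirenberg (which is indeed exactly where the threshold $m>(d-2)_+/d$, i.e.\ $2/m<2^*$, enters), and the compactness treatment of the large-entropy regime are all sound. The genuine gap is the small-entropy regime, which you dispatch in one sentence by writing $v=1+\psi$ and ``degenerating'' to Poincar\'e--Wirtinger. Small relative entropy only controls $\|\psi\|_{L^1(\Omega)}$ (Csisz\'ar--Kullback--Pinsker, cf.\ Lemma \ref{CKP}); it gives no pointwise or $L^2$ smallness of $\psi$. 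Consequently the expansions $\int_\Omega v\log v\,dx=\tfrac12\int_\Omega\psi^2\,dx+o(\|\psi\|_{L^2}^2)$ and $\int_\Omega|\nabla w|^2\,dx=\tfrac{m^2}{4}\int_\Omega|\nabla\psi|^2\,dx+o(\|\psi\|_{H^1}^2)$ are unjustified: a density with $\int_\Omega v\log v\,dx<\delta_0$ may still satisfy $v\approx 0$ or $v\gg1$ on a set of small measure, where $(1+\psi)\log(1+\psi)$ is not comparable to $\tfrac12\psi^2$ (it is of lower order at infinity, since $s\log s=o(s^2)$, so the entropy can be much smaller than $\tfrac12\int_\Omega\psi^2\,dx$) and where $\nabla w=\tfrac m2 v^{m/2-1}\nabla v$ degenerates for $m\neq2$ (so the Dirichlet energy can likewise be much smaller than $\tfrac{m^2}{4}\int_\Omega|\nabla\psi|^2\,dx$ in the region $v\gg1$ when $m<2$). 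Both error terms therefore go in the unfavourable direction on the same bad set, and they are not $o(\cdot)$ uniformly over the class of admissible $v$, so they cannot be absorbed into the Poincar\'e constant. This near-equilibrium step is precisely where the analytical content of the lemma lies; note that your closing paragraph locates the main obstacle in the large-entropy regime, whereas it is the opposite --- your coercivity estimate settles that case completely, and it is the small-entropy case that is left open. To close it one would need either a second, renormalised compactness argument (work with $\psi_n=(v_n-\overline{v_n})/\|v_n-\overline{v_n}\|$, truncate, and control the degeneracies of $v\mapsto v^{m/2}$ near $0$ and of $v\log v$ at infinity) or pointwise elementary inequalities in the spirit of the entropy--entropy dissipation estimates of \cite{FT18}.
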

The cornerstone of the entropy method is the entropy--entropy dissipation inequality which is proved in the following lemma.
\begin{lemma}\label{algebraic}
	Let $K>0$ and $m_i > (d-2)_+/d$ for all $1 \leq i \leq I$. There exist constants $C>0$ and $\alpha \geq 1$ depending on $K$ such that for any measurable function $u: \Omega \to \mathbb R_+^I$ subject to $\mathbb Q\overline{u} = \mathbb Q u_\infty$ and $\mathcal{E}[u|u_\infty]\leq K$, we have
	\begin{equation*}
	\mathcal{D}[u] \geq C(\mathcal E[u|u_\infty])^{\alpha}.
	\end{equation*}
\end{lemma}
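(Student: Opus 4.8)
The plan is to exploit the additivity of the relative entropy and to treat the diffusive and reactive contributions of $\D[u]$ separately. First I would decompose, using $|\Omega|=1$ and $\mathbb Q\ol u = \mathbb Q u_\infty$,
\begin{equation*}
\E[u|u_\infty] = \underbrace{\sumi \int_\Omega u_i \log\frac{u_i}{\ol{u_i}}\,dx}_{=:\,\E[u|\ol u]} + \underbrace{\sumi\bra{\ol{u_i}\log\frac{\ol{u_i}}{u_{i,\infty}} - \ol{u_i} + u_{i,\infty}}}_{=:\,\E[\ol u|u_\infty]},
\end{equation*}
so that it suffices to bound $\D[u]$ below by a power of each summand; since both are controlled by $K$ (Lemma \ref{L1-bound} bounds $\ol{u_i}$), an elementary inequality then recombines the two estimates into $\D[u]\geq C\,\E[u|u_\infty]^\alpha$ with $\alpha = \max_i\{1,m_i\}$.

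For the spatial part $\E[u|\ol u]$ I would discard the reaction terms and use the diffusion alone. Writing $u_i^{m_i-2}|\na u_i|^2 = \frac{4}{m_i^2}|\na u_i^{m_i/2}|^2$ and applying the $L^m$-logarithmic Sobolev inequality (Lemma \ref{Lp-LSI}) componentwise gives $\int_\Omega u_i^{m_i-2}|\na u_i|^2\,dx \geq C\,(\E[u_i|\ol{u_i}])^{m_i}$, where $\E[u_i|\ol{u_i}] = \int_\Omega u_i\log(u_i/\ol{u_i})\,dx$. Because each $\E[u_i|\ol{u_i}]\le K$, the elementary bound $x^{m_i}\ge K^{m_i-\alpha}x^\alpha$ for $0\le x\le K$ upgrades every exponent $m_i$ to the common $\alpha=\max_i\{1,m_i\}$, and summing over $i$ yields $\D[u]\ge C(K)\,\E[u|\ol u]^\alpha$.

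The reactive part is the crux. By joint convexity of $\Psi$ and Jensen's inequality, $\int_\Omega \Psi(u^{y_r}/u_\infty^{y_r};u^{y_r'}/u_\infty^{y_r'})\,dx \ge \Psi(\ol{u^{y_r}}/u_\infty^{y_r};\ol{u^{y_r'}}/u_\infty^{y_r'})$, which reduces matters to spatial averages of the monomials. The finite-dimensional entropy--entropy dissipation inequality for complex balanced networks without boundary equilibria from \cite{FT18}, applied to the vector $\ol u$ (which satisfies $\mathbb Q\ol u = \mathbb Q u_\infty$ and has entropy $\le K$), furnishes $\sum_r k_r u_\infty^{y_r}\Psi(\ol u^{y_r}/u_\infty^{y_r};\ol u^{y_r'}/u_\infty^{y_r'}) \ge C\,\E[\ol u|u_\infty]$. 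The gap I must close is therefore the passage from the \emph{average of the monomial} $\ol{u^{y_r}}$ to the \emph{monomial of the averages} $\ol u^{y_r}$.

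This comparison is the main obstacle. I would quantify $|\ol{u^{y_r}}-\ol u^{y_r}|$ by the spatial entropy $\E[u|\ol u]$ (equivalently, by the diffusion already estimated above), using that the deviations $u_i-\ol{u_i}$ have zero mean and that their oscillation is controlled through $\na u_i^{m_i/2}\in L^2$ together with the a priori bounds $\ol{u_i}\le\widetilde K$ and $u_i\log u_i\in L^1$; the degeneracy of the diffusion and the possibly non-integer stoichiometric exponents $y_{r,i}$ make this interpolation the delicate point. I would then argue by a case distinction: if $\E[u|\ol u]$ exceeds a fixed fraction of $\E[u|u_\infty]$, the spatial bound of the second paragraph already dominates; otherwise $\ol{u^{y_r}}$ is so close to $\ol u^{y_r}$ that the Jensen-plus-finite-dimensional estimate survives the error and yields $\D[u]\ge C\,\E[\ol u|u_\infty]$. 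Combining the two regimes with the additivity decomposition and the boundedness by $K$ then delivers the claimed inequality with $\alpha=\max_i\{1,m_i\}$.
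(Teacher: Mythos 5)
Your treatment of the diffusive part is essentially the paper's (modulo one issue noted below), but the reactive part contains a genuine gap that you acknowledge and do not close, and the route you sketch for closing it would fail. After applying Jensen's inequality you are left with $\Psi\bigl(\overline{u^{y_r}}/u_\infty^{y_r};\overline{u^{y_r'}}/u_\infty^{y_r'}\bigr)$, and you propose to control $|\overline{u^{y_r}}-\overline{u}^{\,y_r}|$ by the spatial entropy $\E[u|\overline{u}]$. This cannot work: a function equal to $\overline{u_i}$ plus a spike of height $n$ on a set of measure $n^{-2}$ has $\int_\Omega u_i\log(u_i/\overline{u_i})\,dx = O(n^{-1}\log n)\to 0$ while $\overline{u_i^{\,3}}\geq n^{3}\cdot n^{-2}\to\infty$, so small spatial entropy gives no control whatsoever on the higher moments $\overline{u^{y_r}}$ entering the reaction terms. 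The paper avoids this obstacle entirely by \emph{not} separating the two contributions via Jensen: it invokes \cite[Theorem 1.1]{FT18}, which is the full infinite-dimensional inequality $\sum_i D_i + F \geq K_3\,\E[u|u_\infty]$ with $D_i=\int_\Omega u_i\log(u_i/\overline{u_i})\,dx$ and $F$ the \emph{spatially integrated} reaction dissipation $\sum_r k_r u_\infty^{y_r}\int_\Omega\Psi(\cdots)\,dx$. The interplay between $D_i$ and $F$ is exactly what makes that inequality true, and it is the content of \cite{FT18}; your decomposition discards this interplay and then has to reconstruct it by hand, which is where the argument breaks down. The correct structure is: (i) use the two logarithmic Sobolev lemmas to bound the diffusion below by $\sum_i D_i^{\max\{1,m_i\}}$, (ii) distinguish the cases $\max_i D_i\vee F\geq 1$ (where $\D[u]\geq K_1\geq K_1 K^{-\alpha}\E^\alpha$) and $\leq 1$ (where $\sum_i D_i^{\max\{1,m_i\}}+F\geq K_2(\sum_i D_i+F)^\alpha$), and (iii) apply \cite[Theorem 1.1]{FT18} to $\sum_i D_i+F$.

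A secondary point: for the indices with $(d-2)_+/d<m_i<1$ you invoke Lemma \ref{Lp-LSI}, but that lemma is only stated for $m\geq 1$. In the fast-diffusion regime you must instead use the generalised logarithmic Sobolev inequality of Lemma \ref{Generalised-LSI}, which yields $\int_\Omega u_i^{m_i-2}|\nabla u_i|^2\,dx\geq C\,\overline{u_i}^{\,m_i-1}D_i\geq C M_1^{m_i-1}D_i$, the prefactor being uniform precisely because $m_i-1<0$ and $\overline{u_i}\leq M_1$ by the entropy bound. Your "upgrade the exponent" step $x^{m_i}\geq K^{m_i-\alpha}x^\alpha$ for $0\leq x\leq K$ is fine and is an acceptable substitute for the paper's case distinction on $D_i\lessgtr 1$, but it only applies once a power-type lower bound is in hand for \emph{all} $i$, which requires the two different lemmas in the two regimes.
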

\begin{proof}
	First, similarly to Lemma \ref{L1-bound}, we have the following $L^1$ bounds:
	\begin{equation*}
	\overline{u}_i = \|u_i\|_{L^1(\Omega)} \leq M_1\coleq 2\left(K + \sum_{i=1}^{I}u_{i,\infty}\right).
	\end{equation*}
	For all $i$ with $(d-2)_+/d < m_i < 1$, we apply Lemma \ref{Generalised-LSI} to have
	\begin{equation}\label{fast-diffusion}
	\begin{aligned}
	\int_{\Omega}u_i^{m_i-2}|\nabla u_i|^2 \, dx&\geq C(\Omega,m_i)\overline{u}_i^{m_i - 1}\int_{\Omega}u_i\log{\frac{u_i}{\overline{u}_i}} \, dx\\
	&\geq C(\Omega,m_i)M_1^{m_i - 1}\int_{\Omega}u_i\log{\frac{u_i}{\overline{u}_i}} \, dx
	\end{aligned}
	\end{equation}
	since $m_i < 1$ and $\overline{u}_i \leq M_1$. Therefore, by applying Lemma \ref{Lp-LSI}, we can estimate the entropy dissipation from below as follows: 
	\begin{equation*}
	\mathcal D[u] \geq K_1\left[\sum_{i=1}^{I}\left(\int_{\Omega}u_i\log{\frac{u_i}{\overline{u}_i}} \, dx\right)^{\max\{1,m_i\}} + \sum_{r=1}^{R}k_ru_{\infty}^{y_r}\int_{\Omega}\Psi\left(\frac{u^{y_r}}{u_{\infty}^{y_r}};\frac{u^{y_r'}}{u_{\infty}^{y_r'}}\right) dx\right].
	\end{equation*}
	For simplicity, we denote $\alpha = \max_{i=1,\ldots, I}\{1, m_i\}$,  
	\begin{equation*}
	D_i = \int_{\Omega}u_i\log{\frac{u_i}{\overline{u}_i}} \, dx \quad \text{ for } i = 1,\ldots, I, \quad \text{ and } \quad F = \sum_{r=1}^{R}k_ru_{\infty}^{y_r}\int_{\Omega}\Psi\left(\frac{u^{y_r}}{u_{\infty}^{y_r}};\frac{u^{y_r'}}{u_{\infty}^{y_r'}}\right) dx.
	\end{equation*}
	If $D_i \geq 1$ for some $i \in \{1,\ldots, I\}$ or $F\geq 1$, then we have
	\begin{equation}\label{alpha1}
	\mathcal D[u] \geq K_1 \geq \frac{K_1}{K^{\alpha}}\left(\mathcal E[u|u_{\infty}]\right)^{\alpha}
	\end{equation}
	using $\mathcal E[u|u_{\infty}]\leq K$. It remains to consider the case $D_i \leq 1$ for all $i=1,\ldots, I$ and $F \leq 1$. Here, we find   
	\begin{equation}\label{alpha1_0}
	\mathcal{D}[u]\geq K_1\left[\sum_{i=1}^{I}D_i^{\max\{1,m_i\}} + F\right] \geq K_1\left[\sum_{i=1}^{I}D_i^{\alpha} + F^{\alpha} \right] \geq K_2\left[\sum_{i=1}^{I}D_i + F\right]^{\alpha}.
	\end{equation}
	By applying now the entropy--entropy production estimate for the case of linear diffusion (cf.\ \cite[Theorem 1.1]{FT18}), we have
	\begin{equation}\label{alpha2}
	\sum_{i=1}^{I}D_i + F \geq K_3\mathcal E[u|u_{\infty}]
	\end{equation}
	and, thus,
	\begin{equation*}
	\mathcal D[u] \geq K_4(\mathcal E[u|u_{\infty}])^{\alpha}.
	\end{equation*}
	From \eqref{alpha1} and \eqref{alpha2}, we obtain the desired estimate.		
	\end{proof}
	
	\subsection{Convergence of renormalised solutions}
%	\begin{theorem}[Exponential convergence to equilibrium]
%		There exist constants $C>0$ and $\lambda > 0$ such that
%		\begin{equation*}
%			\sum_{i=1}^{I}\|u_i(t) - u_{i,\infty}\|_{L^1(\Omega)}^2 \leq Ce^{-\lambda t} \quad \text{ for all } \quad t\geq 0.
%		\end{equation*}
%	\end{theorem}
	\begin{proof}[Proof of Theorem \ref{thm2}]
		The global existence of renormalised solutions for \eqref{mass-action-system} follows from Theorem \ref{existence-renormalised}. Indeed, the assumptions \eqref{F1} and \eqref{F2} can be easily checked using \eqref{reaction-term}, and \eqref{F3} follows from the identity (cf.\ \cite[Proposition 2.1]{DFT17})
		\begin{equation*}
			\sum_{i=1}^{I}f_i(u)(-\log u_{i,\infty} + \log u_i) = -\sum_{r=1}^{R}k_ru_{\infty}^{y_r}\Psi\left(\frac{u^{y_r}}{u_{\infty}^{y_r}};\frac{u^{y_r'}}{u_{\infty}^{y_r'}}\right)\leq 0
		\end{equation*}
		recalling $\Psi(x;y) = x\log(x/y) - x + y$.		

		\medskip
		Thanks to \eqref{conservation_law} and \eqref{entropy_law}, we have the conservation laws
		\begin{equation*}
			\mathbb Q\overline{u(t)} = \mathbb Q \overline{u_0} \quad \text{ for all } \quad t\geq 0,
		\end{equation*}		
		and
		\begin{equation*}
			\mathcal E[u(t)|u_\infty] + \int_s^t\mathcal{D}[u(\tau)]d\tau \leq \mathcal E[u(s)|u_\infty] \quad \text{ for all } \quad 0 \leq s \leq t.
		\end{equation*}
		We now apply Lemma \ref{algebraic} to get
			\begin{equation}\label{d1}
				\mathcal E[u(t)|u_{\infty}] + C\int_{s}^{t}\left(\mathcal  E[u(\tau)|u_{\infty}]\right)^{\alpha}d\tau \leq \mathcal E[u(s)|u_{\infty}] \quad \text{ for all } \quad 0 \leq s \leq t.
			\end{equation}
			If $\alpha = 1$, we immediately get the exponential convergence
			\begin{equation*}
				\mathcal E[u(t)|u_{\infty}] \leq e^{-\lambda t}\mathcal E[u_0|u_{\infty}] \quad \text{ for all } \quad t\geq 0
			\end{equation*}
			for some constant $\lambda >0$. For $\alpha > 1$, we first obtain the algebraic decay 
			\begin{equation}\label{d2}
				\mathcal E[u(t)|u_{\infty}] \leq \frac{1}{\left(\mathcal E[u_0|u_{\infty}]^{1-\alpha} + C(\alpha-1)t \right)^{1/(\alpha-1)}}.
			\end{equation}
			Indeed, we define 
			\begin{equation*}
				\psi(t) = \E[u(t)|u_\infty] \quad \text{ and } \quad \varphi(s) = \int_s^t\bra{\E[u(\tau)|u_\infty]}^{\alpha}d\tau.
			\end{equation*}
			It follows from \eqref{d1} that
			\begin{equation*}
				\psi(t) + C\varphi(s) \leq \psi(s).
			\end{equation*}
			Differentiating $\varphi(s)$ with respect to $s$ gives
			\begin{equation*}
				\frac{d}{ds}\varphi(s) = -(\psi(s))^{\alpha} \leq -\bra{\psi(t) + C\varphi(s)}^{\alpha}
			\end{equation*}
			and consequently
			\begin{equation*}
				\frac{d}{ds}\sbra{\psi(t) + C\varphi(s)} + C\sbra{\psi(t) + C\varphi(s)}^{\alpha} \leq 0.
			\end{equation*}
			Applying a non-linear Gronwall inequality yields
			\begin{equation*}
				\psi(t) + C\varphi(s) \leq \frac{1}{\sbra{\bra{\psi(t)+C\varphi(0)}^{1-\alpha} + C(\alpha-1)s}^{1/(\alpha-1)}} \leq \frac{1}{\sbra{\psi(0)^{1-\alpha} + C(\alpha-1)s}^{1/(\alpha-1)}}
			\end{equation*}
			thanks to $\psi(t) + C\varphi(0) \leq \psi(0)$ and $\alpha>1$. Setting $s = t$ and using $\varphi(t) = 0$ leads us to the desired estimate \eqref{d2}.
			Owing to the Csisz\'ar--Kullback--Pinsker inequality, it follows
			\begin{equation*}
				\sum_{i=1}^{I}\|u_i(t) - u_{i,\infty}\|_{L^1(\Omega)}^2 \leq \frac{C_{CKP}^{-1}}{\left(\mathcal E[u_0|u_{\infty}]^{1-\alpha} + C(\alpha-1)t \right)^{1/(\alpha-1)}}.
			\end{equation*}
			Therefore, there exists an explicit $T_0>0$ such that 
			\begin{equation*}
				\|u_i(t) - u_{i,\infty}\|_{L^1(\Omega)} \leq \frac 12 u_{i,\infty} \quad \text{ for all } \quad t\geq T_0
			\end{equation*}
			and, thus,
			\begin{equation*}
				\overline{u}_i(t) = \|u_i(t)\|_{L^1(\Omega)} \geq \frac 12 u_{i,\infty} \quad \text{ for all } \quad t\geq T_0.
			\end{equation*}
			Using this property, we estimate $\mathcal D[u(t)]$ for $t\geq T_0$ as follows. If $m_i<1$, then it is similar to \eqref{fast-diffusion} that 
			\begin{equation}\label{k1}
			\begin{aligned}
			\int_{\Omega}\overline{u}_i(t)^{m_i-2}|\nabla u_i(t)|^2 \, dx &\geq C(\Omega,m_i)\overline{u}_i(t)^{m_i - 1}\int_{\Omega}u_i(t)\log{\frac{u_i(t)}{\overline{u}_i(t)}} \, dx\\ &\geq C(\Omega,m_i)\tilde{K}^{m_i - 1}\int_{\Omega}u_i(t)\log{\frac{u_i(t)}{\overline{u}_i(t)}} \, dx,
			\end{aligned}
			\end{equation}
			and for $m_i\geq 1$,
			\begin{equation}\label{k2}
			\begin{aligned}
			\int_{\Omega}\overline{u}_i(t)^{m_i-2}|\nabla u_i(t)|^2 \, dx &\geq C(\Omega,m_i)\overline{u}_i(t)^{m_i - 1}\int_{\Omega}u_i(t)\log{\frac{u_i(t)}{\overline{u}_i(t)}} \, dx\\ &\geq C(\Omega,m_i)\left(\frac{u_{i,\infty}}{2}\right)^{m_i - 1}\int_{\Omega}u_i(t)\log{\frac{u_i(t)}{\overline{u}_i(t)}} \, dx
			\end{aligned}
			\end{equation}
			since $\overline{u}_i(t) \geq \frac 12 u_{i,\infty}$ for all $t\geq T_0$. Therefore, for all $t\geq T_0$, we can estimate $\mathcal D[u(t)]$ as
			\begin{equation*}
				\mathcal D[u(t)] \geq K_5\left[\sum_{i=1}^{I}\int_{\Omega}{u}_i(t)\log\frac{u_i(t)}{\overline{u}_i(t)} \, dx +  \sum_{r=1}^{R}k_ru_{\infty}^{y_r}\int_{\Omega}\Psi\left(\frac{u(t)^{y_r}}{u_{\infty}^{y_r}};\frac{u(t)^{y_r'}}{u_{\infty}^{y_r'}}\right) dx\right].
			\end{equation*}
			Using again the entropy--entropy production estimate in the case of linear diffusion (cf.\ \cite[Theorem 1.1]{FT18}), we have
			\begin{equation*}
				\mathcal D[u(t)] \geq K_6\mathcal E[u(t)|u_{\infty}] \quad \text{ for all } \quad t\geq T_0,
			\end{equation*}
			which leads to the exponential convergence
			\begin{equation*}
				\mathcal E[u(t)|u_{\infty}] \leq e^{-K_6(t-T_0)}\mathcal E[u(T_0)|u_{\infty}] \quad \text{ for all } \quad t\geq T_0.
			\end{equation*}
			Since $T_0$ can be explicitly computed, we in fact get the exponential convergence for all $t\geq 0$, i.e.
			\begin{equation*}
				\mathcal E[u(t)|u_{\infty}] \leq Ce^{-\lambda t}\mathcal E[u_0|u_{\infty}] \quad \text{ for all } \quad t\geq 0,
			\end{equation*}
			for some constants $C>0$ and $\lambda >0$. Thanks to the Csisz\'ar--Kullback--Pinsker inequality, we finally obtain the desired exponential convergence to equilibrium.
	\end{proof}
	\subsection{Convergence of weak and bounded solutions}
	We consider the approximate $\ue$ to the regularised system \eqref{approx-system}. From Lemma \ref{a.e.-limit}, we know that, up to a subsequence, 
	\begin{equation}\label{ae}
		\ue_i \xrightarrow{\eps \to 0} u_i \quad \text{a.e.\ in } Q_T.
	\end{equation}
	The next lemma establishes uniform-in-$\eps$ {\it a priori} bounds.
	\begin{lemma}\label{add_lem0}
		Under the assumptions \eqref{F1}--\eqref{F3} and $0\leq u_0$ with $u_{i,0}\log u_{i,0}\in L^2(\Omega)$ for all $i=1,\ldots, I$, there exists a constant $C_T>0$ which depends on $T>0$ but not on $\eps>0$, such that
		\begin{equation}\label{uniform-bound}
			\int_{Q_T}(\log \ue_i)^2(\ue_i)^{m_i+1}\, dx \, dt \leq C_T \quad \text{ for all } \quad i=1,\ldots, I.
		\end{equation}
	\end{lemma}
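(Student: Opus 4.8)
The plan is to derive a \emph{second-order} (weighted) entropy estimate for the regularised solutions $u^\eps$ and then convert it into the claimed space--time bound by a Gagliardo--Nirenberg interpolation. The guiding observation is that the target integrand $(\log u_i^\eps)^2(u_i^\eps)^{m_i+1}$ is, up to constants, the square of a quantity $H_i(u_i^\eps)$ with $H_i(s)\sim s^{(m_i+1)/2}\log s$, while $\int_\Omega (u_{i,0}^\eps\log u_{i,0}^\eps)^2\,dx$ is exactly the natural initial value of an energy of the form $\int_\Omega (u_i^\eps\log u_i^\eps)^2\,dx$. This is what matches the hypothesis $u_{i,0}\log u_{i,0}\in L^2(\Omega)$ (together with the uniform bound in \eqref{assump1}) and dictates the choice of test function.

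First I would fix, for each $i$, a function $\Phi_i\in C^2([0,\infty))$ with $\Phi_i(s)=s^2(\log s)^2$ for $s$ large and $\Phi_i$ bounded near $s=0$, and test the weak formulation \eqref{regulweak} of the $i$-th equation in \eqref{approx-system} with $\phi_i\coleq\Phi_i'$ (rigorously with $\phi_i$ precomposed with a truncation at level $E$, followed by $E\to\infty$, which is licensed by the $L^\infty_{loc}$-regularity of $u_i^\eps$ from Lemma \ref{existence-approx}). Since $\phi_i(s)\sim 2s(\log s)^2$, integrating in time produces
\begin{equation*}
\int_\Omega\Phi_i(u_i^\eps(T))\,dx + d_i m_i\int_{Q_T}\big|\na H_i(u_i^\eps)\big|^2\,dx\,dt = \int_\Omega\Phi_i(u_{i,0}^\eps)\,dx + \int_{Q_T}\frac{f_i(u^\eps)}{1+\eps|f(u^\eps)|}\,\phi_i(u_i^\eps)\,dx\,dt ,
\end{equation*}
where $H_i$ is the primitive fixed by $H_i'(s)^2=\phi_i'(s)\,s^{m_i-1}$, so that $H_i(s)\sim s^{(m_i+1)/2}\log s$. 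The initial term is bounded uniformly in $\eps$ by \eqref{assump1} and the $L^2$ assumption on $u_{i,0}\log u_{i,0}$.

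Granting for the moment control of the reaction term, this yields two uniform-in-$\eps$ bounds: $\sup_{t\le T}\int_\Omega (u_i^\eps\log u_i^\eps)^2\,dx\le C_T$, that is $u_i^\eps\log u_i^\eps\in L^\infty(0,T;L^2(\Omega))$, together with $\int_{Q_T}|\na H_i(u_i^\eps)|^2\le C_T$. I would then close the argument by a Gagliardo--Nirenberg interpolation applied to $H_i(u_i^\eps)(t)$ at a.e.\ time $t$: the $L^\infty(0,T;L^2)$ bound on $u_i^\eps\log u_i^\eps$ controls $\|H_i(u_i^\eps)(t)\|_{L^{r_i}(\Omega)}$ with $r_i\coleq\min\{4/(m_i+1),2\}$ uniformly in $t$ (since $|H_i(s)|^{r_i}\lesssim s^2(\log s)^2+1$), and, combined with $\na H_i(u_i^\eps)\in L^2(Q_T)$, this gives after integration in time (using $|\Omega|=1$ and that the interpolation exponent satisfies $2\theta\le2$)
\begin{equation*}
\int_{Q_T}\big|H_i(u_i^\eps)\big|^2\,dx\,dt\le C_T ,
\end{equation*}
which is exactly \eqref{uniform-bound} because $|H_i(s)|^2\sim s^{m_i+1}(\log s)^2$.

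The hard part is the uniform-in-$\eps$ estimate of the reaction contribution $\int_{Q_T}f_i^\eps(u^\eps)\phi_i(u_i^\eps)$. Here I would invoke the polynomial growth $|f_i^\eps|\le|f_i(u^\eps)|\le C(1+|u^\eps|^\mu)$ available in the setting of Theorem \ref{thm3}, which bounds this term by $C\int_{Q_T}(1+|u^\eps|^\mu)\,u_i^\eps(\log u_i^\eps)^2$. The condition $\mu\le m_i+1$ (equivalently \eqref{high_mi}) is precisely what permits one to absorb, via Young's inequality and interpolation against the already-available bounds—the entropy bound \eqref{a0} and the emerging $L^\infty(0,T;L^2)$ bound on $u^\eps\log u^\eps$—an arbitrarily small multiple of this term into the good dissipation $\sum_i\int_{Q_T}(u_i^\eps)^{m_i+1}(\log u_i^\eps)^2$, leaving a lower-order integrable remainder. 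Summing over $i$ and closing the resulting integral inequality by a Grönwall argument delivers the constant $C_T$. Throughout, the delicate point is that every constant must be tracked to be independent of $\eps$, for which the pointwise bound $(1+\eps|f(u^\eps)|)^{-1}\le1$ and the uniform initial bounds \eqref{assump1} are essential; the payoff of \eqref{uniform-bound} is the equi-integrability of $f_i(u^\eps)$ needed to pass to the limit $\eps\to0$ in Lemma \ref{a.e.-limit} and identify a weak solution.
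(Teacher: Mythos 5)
Your overall strategy---a weighted energy estimate obtained by testing with $\Phi_i'(u_i^\eps)\sim 2u_i^\eps(\log u_i^\eps)^2$ followed by Gagliardo--Nirenberg interpolation---is genuinely different from the paper's argument, which is an $L^2$-duality (Pierre-type) estimate in the entropy variables $w_i^\eps = u_i^\eps\log u_i^\eps+(\mu_i-1)u_i^\eps+e^{-\mu_i}$ and $z_i^\eps=(\log u_i^\eps+\mu_i)(u_i^\eps)^{m_i}-\frac{(u_i^\eps)^{m_i}-e^{-\mu_i m_i}}{m_i}$. The paper writes $\partial_t w_i^\eps-d_i\Delta z_i^\eps=(\log u_i^\eps+\mu_i)f_i^\eps(u^\eps)-d_im_i(u_i^\eps)^{m_i-2}|\nabla u_i^\eps|^2$, sums over $i$, uses \eqref{F3} to reduce the reaction to $A+\beta\sum_iw_i^\eps$, and then pairs the resulting inequality with $\int_0^t e^{-\beta s}\sum_id_iz_i^\eps\,ds$, so that the target quantity arises as the product $\big(\sum_iw_i^\eps\big)\big(\sum_id_iz_i^\eps\big)$, which dominates $(\log u_i^\eps)^2(u_i^\eps)^{m_i+1}$, while the Laplacian contributes a term of favourable sign. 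Your route, by contrast, has a genuine gap at its decisive step, the reaction term.

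First, the lemma is stated under \eqref{F1}--\eqref{F3} only; no polynomial growth of $f_i$ is assumed, so your appeal to $|f_i(u)|\le C(1+|u|^\mu)$ already proves a strictly weaker statement than the one claimed (the duality proof needs no growth hypothesis, because after multiplying equation $i$ by $\log u_i^\eps+\mu_i$ and summing, \eqref{F3} controls the reaction by $C\sum_i(1+u_i^\eps\log u_i^\eps)$). Second, and more seriously, your test function destroys precisely this cancellation: $\sum_i f_i(u)\,u_i(\log u_i)^2$ carries no sign or entropy structure, so you must estimate $\int_{Q_T}(1+|u^\eps|^\mu)\,u_i^\eps(\log u_i^\eps)^2$ by brute force. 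Taking all components comparable to $s$, this integrand is of order $s^{\mu+1}(\log s)^2$, whereas the quantity you propose to absorb it into, $\sum_i(u_i^\eps)^{m_i+1}(\log u_i^\eps)^2$, is of order $s^{m_i+1}(\log s)^2=s^{\mu}(\log s)^2$ in the borderline case $m_i=\mu-1$ admitted by \eqref{high_mi}; the reaction term is larger by a full power of $s$, so no Young-type absorption with a small constant is possible (and even when $m_i+1=\mu+1$ the two sides are of the same order, which still rules out absorption with an arbitrarily small $\delta$). Moreover, $\sum_i\int_{Q_T}(u_i^\eps)^{m_i+1}(\log u_i^\eps)^2$ does not appear on the left-hand side of your energy identity---it is only produced a posteriori by the interpolation---so the proposed Gr\"onwall closure is circular as written. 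The duality pairing of $w_i^\eps$ with $z_j^\eps$ is exactly what manufactures the product $u^{m+1}(\log u)^2$ without ever multiplying the reaction by anything worse than $\log u_i^\eps+\mu_i$; I would encourage you to study Pierre's $L^2$-duality method, which is what the appendix implements.
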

	\begin{proof}
		The proof is similar to \cite[Proof of Theorem 2.7]{LP17}. For the sake of completeness, we present it in Appendix \ref{app2}.
	\end{proof}
	\begin{proof}[Proof of Theorem \ref{thm3}]
		By testing \eqref{approx-system} with smooth test functions $\psi \in C^\infty([0,T];C_0^\infty(\Omega))$ satisfying $\psi(\cdot, T) = 0$, we have
		\begin{equation}
			-\int_{Q_T}\ue_i \partial_t \psi \, dx \, dt - \int_{Q_T}d_i(\ue_i)^{m_i}\Delta \psi \, dx \, dt = -\int_{\Omega}u_{i,0}^{\eps}\psi(\cdot,0) \, dx + \int_{Q_T}f_i^\eps(\ue)\psi \, dx \, dt
		\end{equation}
		It follows from \eqref{ae} that $(\ue_i)^{m_i} \to u_i^{m_i}$ a.e.\ in $Q_T$. Due to the bound \eqref{uniform-bound}, it holds that the set $\{(\ue_i)^{m_i}\}_{\eps>0}$ is uniformly integrable in $Q_T$. Thus, thanks to Vitali's lemma, we deduce that $(\ue_i)^{m_i}$ converges strongly to $u_i^{m_i}$ in $L^1(Q_T)$, which implies
		\begin{equation*}
			\int_{Q_T}d_i(\ue_i)^{m_i}\Delta \psi \, dx \, dt \to \int_{Q_T}d_iu_i^{m_i}\Delta \psi \, dx \, dt.
		\end{equation*}
		From the definition of $f_i^\eps$ in \eqref{approx-system} as well as \eqref{ae}, we have
		\begin{equation}\label{ae_f}
			f_i^\eps(\ue) \to f_i(u) \quad \text{ a.e.\ in\ } Q_T.
		\end{equation}
		It remains to show that the set $\{f_i^\eps(\ue)\}_{\eps>0}$ is uniformly integrable. Indeed, let $K \subset Q_T$ be a measurable and compact set with its measure being denoted by $|K|$. From \eqref{mu} and  \eqref{high_mi}, we have
		\begin{equation}
		\label{c3}
		\begin{aligned}
			\int_{K}|f_i^\eps(\ue)|\, dx \, dt &\leq \int_{K}|f_i(\ue)|\, dx \, dt \leq C\int_{K}\bra{1+\sumi|\ue_i|^{\mu}}\, dx \, dt\\
			&\leq C\sbra{|K| + \int_{K}\sumi|\ue_i|^{m_i+1}\, dx \, dt}.
%			&\leq C\sbra{|K| + \int_{K}\bra{\sumi(\log \ue_i)^2|\ue_i|^{m_i+1} + C}\, dx \, dt}\\
		\end{aligned}
		\end{equation}
		It is easy to see that for any $\delta>0$, there exists some $C_{\delta}>0$ such that for all $x\geq 0$,
		\begin{equation*}
			x^{m_i+1} \leq \delta(\log x)^2x^{m_i+1} +  C_{\delta}.
		\end{equation*}
		It follows from \eqref{c3} and Lemma \ref{add_lem0} that
		\begin{equation*}
		\begin{aligned}
			\int_{K}|f_i^\eps(\ue)|\, dx \, dt &\leq C\sbra{|K| + \delta \int_{K}\sumi(\log \ue_i)^2(\ue_i)^{m_i+1} + IC_\delta |K| }\\
			&\leq C_{\delta}|K| + C\delta.
		\end{aligned}
		\end{equation*}
		Therefore, for any $\tau > 0$, we first choose $\delta>0$ fixed such that $C\delta < \tau/2$ and then $K \subset Q_T$ arbitrarily according to $C_\delta|K| < \tau/2$ to ultimately obtain
		\begin{equation*}
			\int_{K}|f_i^\eps(\ue)|\, dx \, dt \leq \tau,
		\end{equation*}
		which is exactly the uniform integrability of $\{f_i^\eps(\ue)\}_{\eps>0}$. From this and \eqref{ae_f}, we can apply Vitali's lemma again to get $f_i^\eps(\ue) \to f_i(u)$ strongly in $L^1(Q_T)$ and, therefore, 
		\begin{equation*}
			\int_{Q_T}f_i^\eps(\ue)\psi \, dx \, dt \to \int_{Q_T}f_i(u)\psi \, dx \, dt.
		\end{equation*}
		This gives us the existence of a global weak solution. The exponential convergence to equilibrium in $L^1(\Omega)$ follows from Theorem \ref{thm2}.
		
		\medskip
		Concerning bounded solutions, we first obtain from Lemma \ref{add_lem0} that 
		\begin{equation*}
			\|u_i\|_{L^{m_i+1}(Q_T)} \leq C_T \quad \text{ for all } \quad i=1,\ldots, I.
		\end{equation*}
		Under the assumptions \eqref{stronger_porous} on the porous medium exponent, we can apply \cite[Lemma 2.2]{FLT20} to get
		\begin{equation*}
			\|u_i\|_{L^\infty(Q_T)} \leq C_T\quad \text{ for all } \quad i=1,\ldots, I.
		\end{equation*}
		Note that the constant $C_T$ grows at most polynomially in $T$. This implies the exponential convergence to equilibrium in $L^p(\Omega)$ for any $1\leq p < \infty$ in \eqref{Lp-convergence}. Indeed, from the exponential convergence in $L^1(\Omega)$, we can use the interpolation inequality to have 
		\begin{align*}
			\|u_i(T) - u_{i,\infty}\|_{L^p(\Omega)} &\leq \|u_i(T) - u_{i,\infty}\|_{L^\infty(\Omega)}^{1-1/p}\|u_i(T) - u_{i,\infty}\|_{L^1(\Omega)}^{1/p} \\
			&\leq C_T^{1-1/p} \big(Ce^{-\lambda T}\big)^{1/p} \leq C_pe^{-\lambda_p T}
		\end{align*}
		for some $0<\lambda_p < \lambda$, since $C_T$ grows at most polynomially in $T$.
	\end{proof}
	\subsection{Uniform convergence of approximate solutions}
		We remark that all the involved constants in the following results {\it do not depend on $\eps$}.
		\begin{lemma}\label{lem0}
			There exists a constant $L_0>0$ such that 
			\begin{equation}\label{L1-bound-1}
				\sup_{t\geq 0}\max_{i=1,\ldots, I}\|\ue_i(t)\|_{L^1(\Omega)} \leq L_0.
			\end{equation}
		\end{lemma}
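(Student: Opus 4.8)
The plan is to control the masses uniformly through the relative entropy $\mathcal{E}[\ue(\cdot)\,|\,u_\infty]$, exploiting that for complex balanced networks this functional is non-increasing along the approximate flow. First I record the entropy structure of \eqref{approx}. Choosing $\mu_i = -\log u_{i,\infty}$---which is admissible because the network is complex balanced---the reaction contribution has a sign:
\begin{equation*}
	\sum_{i=1}^I f_i(\ue)(\log \ue_i + \mu_i) = -\sum_{r=1}^R k_r u_\infty^{y_r}\Psi\!\left(\frac{(\ue)^{y_r}}{u_\infty^{y_r}};\,\frac{(\ue)^{y_r'}}{u_\infty^{y_r'}}\right) \leq 0,
\end{equation*}
and the damping factor $(1+\eps|f(\ue)|)^{-1}\in(0,1]$ preserves this non-positivity. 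Testing the bounded weak solution with $\log(\ue_i+\delta)+\mu_i$, summing over $i$, integrating the diffusion term by parts (producing the non-negative term $\sum_{i=1}^I d_i m_i\int_\Omega (\ue_i)^{m_i-2}|\nabla \ue_i|^2\,dx$), and passing to the limit $\delta\to0$ exactly as in the proof of Lemma \ref{existence-approx}, I obtain the weak entropy dissipation inequality $\mathcal{E}[\ue(t)\,|\,u_\infty]\leq\mathcal{E}[\ue(s)\,|\,u_\infty]$ for a.e.\ $0\le s\le t$; in particular $\mathcal{E}[\ue(t)\,|\,u_\infty]\leq\mathcal{E}[u_0^\eps\,|\,u_\infty]$ for a.e.\ $t\ge0$.

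Next I bound the initial entropy uniformly in $\eps$. Since $|\Omega|=1$, expanding gives
\begin{equation*}
	\mathcal{E}[u_0^\eps\,|\,u_\infty] = \sum_{i=1}^I\left[\int_\Omega u_{i,0}^\eps\log u_{i,0}^\eps\,dx - (\log u_{i,\infty}+1)\,\|u_{i,0}^\eps\|_{L^1(\Omega)} + u_{i,\infty}\right],
\end{equation*}
and each contribution is controlled by \eqref{assump1}: the entropy integrals are bounded by $\sup_{\eps>0}\int_\Omega u_{i,0}^\eps\log u_{i,0}^\eps\,dx<\infty$, while the norms $\|u_{i,0}^\eps\|_{L^1(\Omega)}$ are uniformly bounded because $u_{i,0}^\eps\to u_{i,0}$ in $L^1(\Omega)$. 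Hence $\sup_{\eps>0}\mathcal{E}[u_0^\eps\,|\,u_\infty]\eqcol C_0<\infty$.

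Combining the two steps yields $\mathcal{E}[\ue(t)\,|\,u_\infty]\le C_0$ for a.e.\ $t\ge0$, uniformly in $\eps$. Applying Lemma \ref{L1-bound} to $\ue(t)$ then gives
\begin{equation*}
	\|\ue_i(t)\|_{L^1(\Omega)} = \overline{\ue_i}(t) \leq 2\left(\mathcal{E}[\ue(t)\,|\,u_\infty] + \sum_{j=1}^I u_{j,\infty}\right) \leq 2\left(C_0 + \sum_{j=1}^I u_{j,\infty}\right) \eqcol L_0
\end{equation*}
for every $i$ and a.e.\ $t$; since the mass $t\mapsto\overline{\ue_i}(t)$ is absolutely continuous (test \eqref{regulweak} with $\psi\equiv1$), the bound extends to all $t\ge0$. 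I expect the only genuinely delicate point to be the rigorous derivation of the entropy dissipation inequality for the merely bounded weak solution---justifying the integration by parts against the singular weight $\log(\ue_i+\delta)$ and the passage $\delta\to0$---but since $\ue\in L^\infty_{loc}(0,\infty;L^\infty(\Omega))$ by Lemma \ref{existence-approx}, this is handled by the regularisation already employed there, and, crucially, introduces no $\eps$-dependent constants.
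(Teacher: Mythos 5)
Your proposal is correct and follows essentially the same route as the paper: monotonicity of the relative entropy along the approximate flow (with the damping factor preserving the sign of the reaction term), a uniform-in-$\eps$ bound on $\mathcal E[u_0^\eps|u_\infty]$ via \eqref{assump1}, and then the argument of Lemma \ref{L1-bound}. The paper states this more tersely (simply invoking $\frac{d}{dt}\mathcal E[\ue|u_\infty]=-\mathcal D[\ue]\le 0$), but the ingredients and their order are identical to yours.
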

		\begin{proof}
			It follows from
			\begin{equation*}
				\frac{d}{dt}\E[\ue|u_\infty] = -\D[\ue] \leq 0
			\end{equation*}
			that 
			\begin{equation}\label{bound_entropy}
				\E[\ue(t)|u_\infty] \leq \E[\ue_0|u_{\infty}].
			\end{equation}
			Thanks to \eqref{assump1}, $\sup_{\eps>0}\max_{i=1,\ldots, I}\E[\ue_0|u_{\infty}] < +\infty$. 		The $L^1$ bound \eqref{L1-bound-1} follows with the same arguments as in Lemma \ref{L1-bound}. 
		\end{proof}
%		\begin{lemma}\label{lem1}
%			There exists a constant $L_1>0$ such that
%			\begin{equation*}
%				\D[\ue]\geq L_1\bra{\sum_{i=1}^I\|\sqrt{u_i} - \ol{\sqrt{u_i}}\|_{L^2(\Omega)}^2 + \sum_{r=1}^R\int_{\Omega}\frac{1}{1+\eps|f(\ue)|}\sbra{\sqrt{\frac{\ue}{u_\infty}}^{y_r} - \sqrt{\frac{\ue}{u_\infty}}^{y_r'}}^2dx}^{\alpha}
%			\end{equation*} 
%			where $\alpha = \max_{i=1,\ldots, I}\{1, d_i\}$.
%		\end{lemma}
%		\begin{proof}
%			\todo{Maybe move this proof into the main proof of the Theorem!}
%		\end{proof}
		\begin{lemma}\label{lem2}
			There exists a constant $L_2>0$ such that
			\begin{multline*}
				\sum_{i=1}^I\|\sqrt{u_i^\eps} - \ol{\sqrt{u_i^\eps}}\|_{L^2(\Omega)}^2 + \sum_{r=1}^R\int_{\Omega}\frac{1}{1+\eps|f(\ue)|}\sbra{\sqrt{\frac{\ue}{u_\infty}}^{y_r} - \sqrt{\frac{\ue}{u_\infty}}^{y_r'}}^2dx\\
				\geq L_2\sum_{r=1}^{R}\sbra{\frac{\ol{\sqrt {\ue}}^{y_r}}{\sqrt{u_\infty}^{y_r}} -  \frac{\ol{\sqrt {\ue}}^{y_r'}}{\sqrt{u_\infty}^{y_r'}}}^2.
			\end{multline*}
		\end{lemma}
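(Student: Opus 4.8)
The plan is to read the first sum as a reservoir of variance and to spend it to replace the pointwise reaction monomials on the left by the monomials of the averages on the right. Throughout write $V_i \coleq \sqrt{\ue_i}$, $a_i \coleq \ol{\sqrt{\ue_i}}$ and $\sigma_i^2 \coleq \|V_i - a_i\|_{L^2(\Omega)}^2$, so that the first sum is exactly $S^2 \coleq \sum_{i=1}^I \sigma_i^2$. Since $|\Omega|=1$, Jensen's inequality gives $a_i \le (\ol{\ue_i})^{1/2}$ and $\sigma_i^2 = \ol{\ue_i} - a_i^2 \le \ol{\ue_i}$, so Lemma \ref{lem0} yields the $\eps$-uniform bounds $a_i \le \sqrt{L_0}$ and $\sigma_i^2 \le L_0$. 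Abbreviating the pointwise monomial $B_r(x) \coleq \prod_{i=1}^I (V_i(x)/\sqrt{u_{i,\infty}})^{y_{r,i}}$ and the averaged monomial $A_r \coleq \prod_{i=1}^I (a_i/\sqrt{u_{i,\infty}})^{y_{r,i}}$ (and $B_r'$, $A_r'$ with $y_r'$), the left-hand side is $S^2 + \sum_r \int_\Omega \frac{1}{1+\eps|f(\ue)|}(B_r - B_r')^2\,dx$ and the right-hand side is $P \coleq \sum_r (A_r - A_r')^2$, which is bounded by an $\eps$-independent constant.

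First I would dispose of the damping factor. Since we are interested in the limit $\eps \to 0$, we may assume $\eps \le 1$. Fix $L \coleq 4L_0$ and set $G \coleq \{x : \ue_i(x) \le L \text{ for all } i\}$; on $G$ the continuity of $f$ gives $|f(\ue)| \le C(L)$, hence $\frac{1}{1+\eps|f(\ue)|} \ge \frac{1}{1+C(L)} \eqcol c_0 > 0$ uniformly in $\eps \le 1$, so the reaction part is bounded below by $c_0 \sum_r \int_G (B_r - B_r')^2\,dx$. Cauchy--Schwarz with $|G|\le 1$ then gives $\int_G (B_r - B_r')^2 \ge \big(\int_G (B_r - B_r')\big)^2$. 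The key algebraic input is that, because every stoichiometric exponent lies in $\{0\}\cup[1,\infty)$, the map $(s_1,\dotsc,s_I) \mapsto \prod_i s_i^{y_{r,i}}$ is Lipschitz on the box $[0,\sqrt{L}]^I$; as $V_i \le \sqrt L$ on $G$ and $a_i \le \sqrt{L_0} \le \sqrt L$, this yields $\int_G |B_r - A_r| \le C_L \sum_i \int_\Omega |V_i - a_i| \le C_L \sum_i \sigma_i$. Combining this with $\int_G A_r = A_r|G|$ and the bound $|G^c| \le \sum_i|\{\ue_i > L\}| \le \tfrac{4}{L}\sum_i \sigma_i^2$ (valid because $V_i > 2a_i$ on $\{\ue_i > L\}$, whence $(V_i-a_i)^2 \ge \ue_i/4$ there) gives $|\int_G B_r - A_r| \le C_L' S$, using also $\sigma_i \le \sqrt{L_0}$ to absorb the quadratic term. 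Hence
\[
\int_G B_r - \int_G B_r' = (A_r - A_r') + E_r, \qquad |E_r| \le C_L' S .
\]

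Finally I would combine the two resulting lower bounds
\[
\text{LHS} \ge S^2 \qquad\text{and}\qquad \text{LHS} \ge \tfrac{c_0}{2}P - \kappa S^2,
\]
where $\kappa \coleq 4c_0 R (C_L')^2$; the second follows from $c_0\sum_r\big(\int_G(B_r-B_r')\big)^2 = c_0\sum_r((A_r-A_r')+E_r)^2 \ge c_0\big(\tfrac12 P - \sum_r E_r^2\big)$ together with $\sum_r E_r^2 \le R(C_L')^2 S^2$. A case distinction then closes the argument: if $\kappa S^2 \ge \tfrac{c_0}{4}P$, the first bound gives $\text{LHS} \ge S^2 \ge \tfrac{c_0}{4\kappa}P$; otherwise the second gives $\text{LHS} \ge \tfrac{c_0}{2}P - \tfrac{c_0}{4}P = \tfrac{c_0}{4}P$. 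Thus $L_2 \coleq \min\{c_0/(4\kappa),\,c_0/4\}$, depending only on $L_0$, $u_\infty$ and the reaction data but not on $\eps$, proves the claim. The main obstacle is precisely the damping factor $\frac{1}{1+\eps|f(\ue)|}$: it forces the restriction to the bounded good set $G$ (and the harmless reduction to $\eps \le 1$), after which the large polynomial degree of the monomials makes the Lipschitz constant $C_L$ big and a priori incompatible with the limited variance budget $S^2$ — this tension is exactly what the concluding case distinction resolves.
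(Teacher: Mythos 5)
Your proof is correct and follows essentially the same strategy as the paper's: restrict to a good set on which $u^\eps$ is bounded so that the damping factor $\frac{1}{1+\eps|f(u^\eps)|}$ admits an $\eps$-uniform lower bound, linearise the reaction monomials around the spatial averages with an error controlled by the variance $\sum_i\|\sqrt{u_i^\eps}-\ol{\sqrt{u_i^\eps}}\|_{L^2(\Omega)}^2$, and then balance that variance against the target sum. The only differences are cosmetic: the paper takes the good set $\Omega_1=\{|\delta_i|\le 1 \text{ for all } i\}$, uses a pointwise Taylor expansion there, and closes with a $\theta$-weighted combination exploiting $|\Omega_1|+|\Omega_2|=1$, whereas you take $G=\{u_i^\eps\le 4L_0 \text{ for all } i\}$, control $|G^c|$ by a Chebyshev bound, and close with a case distinction.
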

		\begin{proof}
			We introduce the short hand notation $$\ve_i = \sqrt{\ue_i}, \quad \ve = (\ve_i)_{i=1,\ldots, I}, \quad v_{i,\infty} = \sqrt{u_{i,\infty}}, \quad v_\infty = (v_{i,\infty})_{i=1,\ldots, I},$$
			and $g(\ve) = f((\ve)^2) = f(\ue)$. Moreover, for each $i\in \{1,\ldots, I\}$, we set
			\begin{equation*}
				\ve_i(x) = \ol{\ve_i}  + \delta_i(x), \quad x\in\Omega.
			\end{equation*}
			The desired inequality in Lemma \ref{lem2} becomes
			\begin{equation}\label{new_ineq}
				\sum_{i=1}^{I}\|\delta_i\|_{L^2(\Omega)}^2 + \sum_{r=1}^{R}\int_{\Omega}\frac{1}{1+\eps|g(\ve)|}\sbra{\frac{(\ve)^{y_r}}{v_{\infty}^{y_r}} - \frac{(\ve)^{y_r'}}{v_{\infty}^{y_r'}}}^2dx
				\geq L_2\sum_{r=1}^{R}\sbra{\frac{\ol{\ve}^{y_r}}{v_\infty^{y_r}} - \frac{\ol{\ve}^{y_r'}}{v_\infty^{y_r'}}}^2.
			\end{equation}
			Using Lemma \ref{lem0}, we observe by H\"older's inequality, noting that $|\Omega| = 1$, that 
			\begin{equation*}
				\ol{\ve_i} = \ol{\sqrt{\ue_i}} \leq \sqrt{\ol{\ue_i}} \leq \sqrt{L_0}
			\end{equation*}
			and
			\begin{equation*}
				\|\delta_i\|_{L^2(\Omega)} = \sqrt{\ol{(\ve_i)^2} - \ol{\ve_i}^2} \leq \sqrt{\ol{\ue_i}} \leq \sqrt{L_0}.
			\end{equation*}
			Therefore, there exists a constant $K_7>0$ such that 
			\begin{equation}\label{1_lem2}
				\sum_{r=1}^{R}\sbra{\frac{\ol{\sqrt {\ue}}^{y_r}}{\sqrt{u_\infty}^{y_r}} -  \frac{\ol{\sqrt {\ue}}^{y_r'}}{\sqrt{u_\infty}^{y_r'}}}^2 \leq K_7.
			\end{equation}
			We decompose $\Omega$ into $\Omega = \Omega_1 \cup \Omega_2$ where
			\begin{equation*}
			\Omega_1 = \{x\in\Omega\,:\, |\delta_i(x)| \leq 1 \quad\forall i=1,\ldots, I\} \quad \text{and} \quad \Omega_2 = \Omega\backslash\Omega_1.
			\end{equation*}
			Note that on $\Omega_1$ we have
			\begin{equation*}
				0\leq \ve_i(x) = \ol{\ve_i} + \delta_i \leq \sqrt{L_0} + 1 \quad \text{ for all } \quad i=1,\ldots, I.
			\end{equation*}
			Therefore, there exists a constant $K_8>0$ such that
			\begin{equation}\label{2_lem2}
				\frac{1}{1+\eps|g(\ve(x))|} \geq K_8 \quad \text{ for all } \quad x\in\Omega_1.
			\end{equation}
			By using Taylor's expansion, we have for all $x\in\Omega_1$,
			\begin{align}
				\sbra{\frac{(\ve(x))^{y_r}}{v_{\infty}^{y_r}} - \frac{(\ve(x))^{y_r}}{v_{\infty}^{y_r}}}^2&= \sbra{\frac{1}{v_\infty^{y_r}}\prod_{i=1}^{I}\sbra{\ol{\ve_i} + \delta_i(x)}^{y_{r,i}} - \frac{1}{v_\infty^{y_r'}}\prod_{i=1}^{I}\sbra{\ol{\ve_i} + \delta_i(x)}^{y_{r,i}'}}^2 \nonumber \\
				&=\sbra{\frac{1}{v_\infty^{y_r}}\prod_{i=1}^{I}\bra{\ol{\ve_i}^{y_{r,i}} + R_1\delta_i(x)} - \frac{1}{v_\infty^{y_r'}}\prod_{i=1}^{I}\bra{\ol{\ve_i}^{y_{r,i}'} + R_2\delta_i(x)}}^2 \nonumber \\
				&\geq \frac 12\sbra{\frac{\ol{\ve}^{y_r}}{v_{\infty}^{y_r}} - \frac{\ol{\ve}^{y_r'}}{v_{\infty}^{y_r'}}}^2 - K_9\sum_{i=1}^{I}|\delta_i(x)|^2 \label{3_lem2}
			\end{align}
			for some $K_9>0$. From \eqref{2_lem2} and \eqref{3_lem2}, we can estimate the second sum on the left hand side of \eqref{new_ineq} as
			\begin{equation}\label{4_lem2}
			\begin{aligned}
				&\sum_{r=1}^{R}\int_{\Omega}\frac{1}{1+\eps|g(\ve)|}\sbra{\frac{(\ve)^{y_r}}{v_{\infty}^{y_r}} - \frac{(\ve)^{y_r'}}{v_{\infty}^{y_r'}}}^2dx\\
				&\qquad \geq \sum_{r=1}^{R}\int_{\Omega_1}\frac{1}{1+\eps|g(\ve)|}\sbra{\frac{(\ve)^{y_r}}{v_{\infty}^{y_r}} - \frac{(\ve)^{y_r'}}{v_{\infty}^{y_r'}}}^2dx\\
				&\qquad \geq K_8\sum_{r=1}^{R}\int_{\Omega_1}\bra{\frac 12\sbra{\frac{\ol{\ve}^{y_r}}{v_{\infty}^{y_r}} - \frac{\ol{\ve}^{y_r'}}{v_{\infty}^{y_r'}}}^2 - K_9\sum_{i=1}^{I}|\delta_i(x)|^2}dx\\
				&\qquad \geq \frac{K_8}{2}|\Omega_1|\sum_{r=1}^{R}\sbra{\frac{\ol{\ve}^{y_r}}{v_{\infty}^{y_r}} - \frac{\ol{\ve}^{y_r'}}{v_{\infty}^{y_r'}}}^2 - K_8K_9R\sum_{i=1}^{I}\|\delta_i\|_{L^2(\Omega)}^2.
			\end{aligned}
			\end{equation}
			On the other hand, the first sum on the left hand side of \eqref{new_ineq} is estimated as
			\begin{equation}\label{5_lem2}
				\begin{aligned}
				\sum_{i=1}^{I}\|\delta_i\|_{L^2(\Omega)}^2 &\geq \frac 12\sum_{i=1}^{I}\|\delta_i\|_{L^2(\Omega)}^2 + \frac 12\int_{\Omega_2}\sum_{i=1}^{I}|\delta_i(x)|^2dx\\
				&\geq \frac 12\sum_{i=1}^{I}\|\delta_i\|_{L^2(\Omega)}^2 + \frac 12 |\Omega_2|\\
				&\geq \frac 12\sum_{i=1}^{I}\|\delta_i\|_{L^2(\Omega)}^2 + \frac{1}{2K_7}|\Omega_2|\sum_{r=1}^{R}\sbra{\frac{\ol{\ve}^{y_r}}{v_{\infty}^{y_r}} - \frac{\ol{\ve}^{y_r'}}{v_{\infty}^{y_r'}}}^2.
				\end{aligned}
			\end{equation}
			By combining \eqref{4_lem2} and \eqref{5_lem2}, we have for any $\theta \in (0,1]$,
			\begin{equation}
			\begin{aligned}
				\text{LHS of (\ref{new_ineq})} &\geq \bra{\theta\frac{K_8}{2}|\Omega_1|+\frac{1}{2K_7}|\Omega_2|}\sum_{r=1}^{R}\sbra{\frac{\ol{\ve}^{y_r}}{v_{\infty}^{y_r}} - \frac{\ol{\ve}^{y_r'}}{v_{\infty}^{y_r'}}}^2\\
				&\quad + \bra{\frac 12 - \theta K_8K_9 R}\sum_{i=1}^{I}\|\delta_i\|_{L^2(\Omega)}^2.
			\end{aligned}
			\end{equation}
			Thus, by choosing $\theta = \min\left\{1; \frac{1}{2K_8K_9R} \right\}$, we finally obtain the desired inequality \eqref{new_ineq} with $L_2 = \min\left\{\frac{\theta K_8}{2}; \frac{1}{2K_7}\right\}$.
		\end{proof}
		\begin{lemma}\label{lem3}
			There exists a constant $L_3>0$ such that
			\begin{align*}
				\sum_{i=1}^I\|\sqrt{u_i^\eps} - \ol{\sqrt{u_i^\eps}}\|_{L^2(\Omega)}^2 + \sum_{r=1}^{R}\sbra{\frac{\ol{\sqrt {\ue}}^{y_r}}{\sqrt{u_\infty}^{y_r}} -  \frac{\ol{\sqrt {\ue}}^{y_r'}}{\sqrt{u_\infty}^{y_r'}}}^2 \geq L_3\sum_{r=1}^{R}\sbra{\sqrt{\frac{\ol{\ue}}{u_\infty}}^{y_r} - \sqrt{\frac{\ol{\ue}}{u_\infty}}^{y_r'}}^2.
			\end{align*}
		\end{lemma}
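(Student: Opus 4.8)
The plan is to reduce the claimed inequality to a single elementary estimate comparing the two kinds of averages that appear on the two sides, namely the \emph{average of the square root} $a_i \coleq \ol{\sqrt{\ue_i}}$ and the \emph{square root of the average} $b_i \coleq \sqrt{\ol{\ue_i}}$. With the shorthand $x^{y_r} = \prod_{i=1}^I x_i^{y_{r,i}}$ and $P_r(x) \coleq x^{y_r}/\sqrt{u_\infty}^{y_r} - x^{y_r'}/\sqrt{u_\infty}^{y_r'}$, the second term on the left-hand side is exactly $\sum_{r} P_r(a)^2$, while the right-hand side is $L_3 \sum_r P_r(b)^2$. Hence it suffices to control the discrepancy $\sum_r \bra{P_r(b) - P_r(a)}^2$ by the first left-hand term $\sumi \|\sqrt{\ue_i} - \ol{\sqrt{\ue_i}}\|_{L^2(\Omega)}^2$, after which a triangle inequality closes the argument.

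The key observation I would isolate first is the variance identity (recall $|\Omega| = 1$)
\[
	\|\sqrt{\ue_i} - \ol{\sqrt{\ue_i}}\|_{L^2(\Omega)}^2 = \ol{\ue_i} - \bra{\ol{\sqrt{\ue_i}}}^2 = b_i^2 - a_i^2 .
\]
Since $s \mapsto \sqrt{s}$ is concave, Jensen's inequality yields $a_i \leq b_i$, and therefore
\[
	(b_i - a_i)^2 \leq (b_i - a_i)(b_i + a_i) = b_i^2 - a_i^2,
\]
so that $0 \leq b_i - a_i \leq \|\sqrt{\ue_i} - \ol{\sqrt{\ue_i}}\|_{L^2(\Omega)}$. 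This is the quantitative bridge between the $L^2$-oscillation of $\sqrt{\ue_i}$ and the gap between the two averages, and it is the only genuinely delicate point of the proof.

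With the bridge in hand, I would invoke the $\eps$-uniform $L^1$ bound of Lemma \ref{lem0}, which gives $a_i, b_i \leq \sqrt{L_0}$ for all $i$ and all $\eps>0$. On the compact box $[0,\sqrt{L_0}]^I$ each coordinate map $s \mapsto s^{y_{r,i}}$ is Lipschitz --- this is precisely where the structural assumption $y_{r,i} \in \{0\}\cup[1,\infty)$ enters, as it rules out the non-Lipschitz fractional powers $s^{\gamma}$ with $\gamma \in (0,1)$ near $s=0$. Telescoping over the $I$ coordinates (each prefix and suffix product being bounded by a power of $\sqrt{L_0}$) then yields $|b^{y_r} - a^{y_r}| \leq C \sumi (b_i - a_i) \leq C\sumi \|\sqrt{\ue_i} - \ol{\sqrt{\ue_i}}\|_{L^2(\Omega)}$, and likewise for $y_r'$. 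Consequently $|P_r(b) - P_r(a)| \leq C\sumi \|\sqrt{\ue_i} - \ol{\sqrt{\ue_i}}\|_{L^2(\Omega)}$, and Cauchy--Schwarz upgrades this to $\bra{P_r(b) - P_r(a)}^2 \leq C\sumi \|\sqrt{\ue_i} - \ol{\sqrt{\ue_i}}\|_{L^2(\Omega)}^2$, with $C$ independent of $\eps$.

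To conclude I would use $P_r(b)^2 \leq 2 P_r(a)^2 + 2\bra{P_r(b) - P_r(a)}^2$, sum over $r = 1, \ldots, R$, and insert the previous bound to obtain $\sum_r P_r(b)^2 \leq 2\sum_r P_r(a)^2 + C\sumi \|\sqrt{\ue_i} - \ol{\sqrt{\ue_i}}\|_{L^2(\Omega)}^2$. Rearranging gives the assertion of Lemma \ref{lem3} with, e.g., $L_3 = (\max\{2, C\})^{-1}$. I expect the main obstacle to be nothing more than recognising the variance identity as the correct bridge and pairing it with the inequality $b_i - a_i \leq \|\sqrt{\ue_i} - \ol{\sqrt{\ue_i}}\|_{L^2(\Omega)}$; the remaining telescoping-and-Lipschitz computation is routine, and the $\eps$-uniform bound of Lemma \ref{lem0} is what keeps every constant independent of $\eps$.
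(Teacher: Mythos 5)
Your proof is correct, and it is complete where the paper itself gives no details: the paper simply defers to \cite[Lemma 2.7]{FT18}, whose argument is in the same spirit as yours (the variance identity $\ol{\ue_i} = \ol{\sqrt{\ue_i}}^{\,2} + \|\sqrt{\ue_i}-\ol{\sqrt{\ue_i}}\|_{L^2(\Omega)}^2$ combined with a Taylor/Lipschitz expansion of the monomials on the compact set supplied by the uniform $L^1$ bound of Lemma \ref{lem0}). Your chain $0 \leq b_i - a_i \leq \|\sqrt{\ue_i}-\ol{\sqrt{\ue_i}}\|_{L^2(\Omega)}$, the telescoping Lipschitz estimate using $y_{r,i}\in\{0\}\cup[1,\infty)$, and the final $P_r(b)^2 \leq 2P_r(a)^2 + 2(P_r(b)-P_r(a))^2$ rearrangement all check out, with every constant independent of $\eps$.
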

		\begin{proof}
			The proof of this lemma follows by the same arguments as in \cite[Lemma 2.7]{FT18}, so we omit it here.
		\end{proof}
		We are now ready to prove Theorem \ref{thm4}.
		\begin{proof}[Proof of Theorem \ref{thm4}]
			We proceed similarly to the proof of Lemma \ref{algebraic}. If $(d-2)_+/d < m_i < 1$, we have, thanks to Lemma \ref{Generalised-LSI} and the bound \eqref{L1-bound-1},
			\begin{equation*}
				\int_{\Omega}(\ue_i)^{m_i-2}|\na \ue_i|^2 \, dx \geq C \ol{\ue_i}^{m_i-1}\int_{\Omega}\ue_i\log\frac{\ue_i}{\ol{\ue_i}} \, dx \geq C L_0^{m_i-1}\int_{\Omega}\ue_i\log\frac{\ue_i}{\ol{\ue_i}} \, dx
			\end{equation*}
			with a constant $C = C(\Omega,m_i) > 0$. If $m_i\geq 1$, we apply Lemma \ref{Lp-LSI} to get
			\begin{equation*}
				\int_{\Omega}(\ue_i)^{m_i-2}|\na \ue_i|^2 \, dx \geq C(\Omega,m_i)\bra{\int_{\Omega}\ue_i\log\frac{\ue_i}{\ol{\ue_i}} \, dx}^{m_i}.
			\end{equation*}
			Therefore, by using $\Psi(x,y) = x\log(x/y) - x + y \geq \bra{\sqrt x - \sqrt y}^2$ and noting that $$\int_{\Omega}\ue_i\log\frac{\ue_i}{\ol{\ue_i}} \, dx = \int_{\Omega}\bra{\ue_i\log\frac{\ue_i}{\ol{\ue_i}} - \ue_i + \ol{\ue_i}}dx \geq \|\sqrt{\ue_i} - \ol{\sqrt{\ue_i}}\|_{L^2(\Omega)}^2,$$ there exists a constant $K_{10}>0$ such that
			\begin{equation}\label{f1}
			\begin{aligned}
				\D[\ue]\geq K_{10}\biggl[&\sum_{i=1}^{I}\bra{\|\sqrt{\ue_i} - \ol{\sqrt{\ue_i}}\|_{L^2(\Omega)}^2}^{\max\{1, m_i\}}\\
				&\quad +  \sum_{r=1}^Rk_ru_\infty^{y_r}\int_{\Omega}\frac{1}{1+\eps|f(u^\eps)|}\biggl(\sqrt{\frac{u^\eps}{u_\infty}}^{y_r} -  \sqrt{\frac{u^\eps}{u_\infty}}^{y_r'}\biggr)^2 \, dx\biggr].
			\end{aligned}
			\end{equation}
			We define  
			\begin{equation*}
			\wh{D}_i:= \|\sqrt{\ue_i} - \ol{\sqrt{\ue_i}}\|_{L^2(\Omega)}^2, \quad \wh{F}:= \sum_{r=1}^Rk_ru_\infty^{y_r}\int_{\Omega}\frac{1}{1+\eps|f(u^\eps)|}\sbra{\sqrt{\frac{u^\eps}{u_\infty}}^{y_r} -  \sqrt{\frac{u^\eps}{u_\infty}}^{y_r'}}^2dx,
			\end{equation*}
			and we consider the following two cases.
			
			{\bf Case 1.}
			If $\wh{F} \geq 1$ or if there exists some $i\in \{1,\ldots, I\}$ such that $\wh{D}_i\geq 1$, we estimate from \eqref{f1} that 
			\begin{equation}\label{f2}
				\D[\ue] \geq K_{10} \geq K_{10}\frac{1}{\E[u_0|u_\infty]}\E[\ue|u_\infty].
			\end{equation}
			
			{\bf Case 2.}
			If $\wh{F}\leq 1$ and $\wh{D}_i\leq 1$ for all $i=1,\ldots, I$, then with $\alpha = \max_{i=1,\ldots,I}\{1,m_i\}$, we have from \eqref{f1} that 
			\begin{equation}\label{f3}
				\D[\ue] \geq K_{10}\sbra{\sum_{i=1}^{I}\wh{D}_i^{\alpha} + \wh{F}^{\alpha}} \geq K_{11}\sbra{\sum_{i=1}^{I}\wh{D}_i + \wh{F}}^{\alpha}
			\end{equation}
			for some $K_{11} = K_{11}(K_{10},\alpha)$. Thanks to Lemmas \ref{lem2} and \ref{lem3}, we have
			\begin{align}
				\sum_{i=1}^I\wh{D}_i + \wh{F} &\geq \min_{r=1,\ldots, R}\{1,k_ru_\infty^{y_r}\}\sbra{\frac 12\sum_{i=1}^I\|\sqrt{\ue_i} - \ol{\sqrt{\ue_i}}\|_{L^2(\Omega)}^2 + \frac{L_2}{2}\sum_{r=1}^{R}\!\bra{\!\frac{\ol{\sqrt {\ue}}^{y_r}}{\sqrt{u_\infty}^{y_r}} -  \frac{\ol{\sqrt {\ue}}^{y_r'}}{\sqrt{u_\infty}^{y_r'}}\!}^{\!\!2} }\nonumber\\
				&\geq K_{12}\sum_{r=1}^{R}\sbra{\sqrt{\frac{\ol{\ue}}{u_\infty}}^{y_r} - \sqrt{\frac{\ol{\ue}}{u_\infty}}^{y_r'}}^2\label{f4}
			\end{align}
			where
			\begin{equation*}
				K_{12} = \frac 12\min_{r=1,\ldots, R}\{L_2,1,k_ru_\infty^{y_r}\}L_3.
			\end{equation*}
			From here, we can use \cite[Lemma 2.5]{FT18} to estimate
			\begin{equation*}
				\E[\ue|u_\infty] \leq K_{13}\sum_{i=1}^{I}\bra{\sqrt{\frac{\ol{\ue_i}}{u_{i,\infty}}} - 1}^2,
			\end{equation*}
			and \cite[Eq. (11)]{FT18} to get
			\begin{equation}\label{f5}
				\sum_{r=1}^{R}\sbra{\sqrt{\frac{\ol{\ue}}{u_\infty}}^{y_r} - \sqrt{\frac{\ol{\ue}}{u_\infty}}^{y_r'}}^2 \geq K_{14}\sum_{i=1}^{I}\bra{\sqrt{\frac{\ol{\ue_i}}{u_{i,\infty}}} - 1}^2 \geq \frac{K_{14}}{K_{13}}\E[\ue|u_\infty].
			\end{equation}
			It follows from \eqref{f3}, \eqref{f4}, and \eqref{f5} that
			\begin{equation}\label{f6}
				\D[\ue] \geq K_{11}\bra{\frac{K_{12}K_{14}}{K_{13}}}^{\alpha}\E[\ue|u_\infty]^{\alpha}.
			\end{equation}
			
			\medskip
			Owing to \eqref{f2} and \eqref{f6}, there exists some $L_4>0$ such that
			\begin{equation*}
				\D[\ue] \geq L_4\E[\ue|u_\infty]^{\max\{1,\alpha\}}.
			\end{equation*}
			The rest of this proof follows exactly from that of Theorem \ref{thm2}, which helps us to eventually obtain the exponential convergence to equilibrium
			\begin{equation*}
				\sum_{i=1}^I\|\ue_i(t) - u_{i,\infty}\|_{L^1(\Omega)}^2 \leq Ce^{-\lambda t}
			\end{equation*}
			where $C, \lambda>0$ are {\it independent of $\eps$.}
		\end{proof}
	
\appendix
\section{Proof of the global existence of approximate solutions}\label{appendix}
In this appendix, we provide a proof for the existence of a global solution to the approximate system \eqref{approx-system}. While a similar proof, with Dirichlet instead of Neumann boundary conditions, can be found in \cite[Proof of Lemma 2.3]{LP17}, we mainly follow a standard Galerkin approach as presented in many textbooks.  

\medskip
{
%\begin{tabular}{l}
%\hline
%\textbf{To be discussed}: For the existence of classical solutions in \cite{amann1985global} one needs\\
%smooth boundary, while we only consider Lipschitz boundary. Luckily, we only need\\
%the global existence of weak solutions to \eqref{bound}. Less luckily, we haven't found a good\\
%reference for {\it global existence of weak solutions for non-degenerate quasi-linear parabolic}\\
%{\it systems with Neumann boundary conditions and Lipschitz boundary.} The best reference\\
%we have found is the paper "Hieber and Rehberg - Quasilinear parabolic systems with\\
%mixed boundary conditions on nonsmooth domains (SIMA 2008)" but this is restricted\\ to three and lower dimensions. There are two options to go further:\\
%(i) To dig deeper or to ask e.g.\ Rehberg to find a precise reference for our purpose\\
%(ii) To completely provide the existence proof for \eqref{bound}\\
%\hline
%\end{tabular}
For $0 < \delta \leq 1$ and $1 \leq i \leq I$, we consider the regularised system
\begin{equation}\label{bound}
\partial_t u_i^{\eps,\delta} - d_i\nabla\cdot\sbra{\bra{\frac{m_i|u_i^{\eps,\delta}|^{m_i-1}}{1+\delta m_i|u_i^{\eps,\delta}|^{m_i-1}}+\delta}\na u_i^{\eps,\delta}} = f_i^{\eps}(u^{\eps,\delta}),  \quad \nu \cdot \na u_i^{\eps,\delta} = 0,
\end{equation}
with initial data $u_i^{\eps,\delta}(x,0) = u_{i,0}^{\eps}(x) + \delta$, where we extend $f_i^\varepsilon$ to a locally Lipschitz continuous function on $\mathbb R^I$ by setting $f_i^\varepsilon(v) \coleq f_i^\varepsilon(\max\{v_1, 0\}, \dotsc, \max\{v_I, 0\})$. %From the classical theory of quasilinear parabolic, see e.g.\ \cite{amann1985global}, there exists a global classical solution $u^{\eps,\delta} = (u_i^{\eps,\delta})_{i=1,\ldots, I}$. 
For the sake of readability, we subsequently write $w_i:= u_i^{\eps,\delta}$ and $H^{\delta}(w_i):= \frac{m_i |w_i|^{m_i-1}}{1+\delta m_i |w_i|^{m_i-1}} + \delta$. We employ the ansatz $w_i^k(x, t) = \sum_{j=1}^k \xi_i^{j,k}(t) e_j(x)$ with $k \in \mathbb N$, scalar coefficient functions $\xi_i^{j,k}(t)$, and the Schauder basis $e_j \in H^1(\Omega)$ of $L^2(\Omega)$ given by the orthonormal eigenfunctions of the Laplace operator satisfying $\nu \cdot \nabla e_j = 0$ on $\partial \Omega$. Along with the initial condition $\xi_i^{j,k}(0) = \int_\Omega (w_0)_i e_j \, dx$, the functions $\xi_i^{j,k}$, $1 \leq j \leq k$, are solutions to the set of equations 
\begin{align}
\label{eqregultestedej}
\int_\Omega \partial_t w_i^k e_j \, dx = -d_i \int_\Omega H^\delta(w_i^k) \nabla w_i^k \cdot \nabla e_j \, dx + \int_\Omega f_i^\varepsilon(w_i^k) e_j \, dx 
\end{align}
for $1 \leq j \leq k$, which can be recast into an ODE system for the time-dependent coefficient vector $\hat \xi_i^k (t) \coleq (\xi_i^{1,k} (t), \dotsc, \xi_i^{k,k} (t))$. As the right hand side continuously depends on $\hat \xi_i^k (t)$ via the bounded and continuous functions $H^\delta$ and $f_i^\varepsilon$, we know that a solution $\hat \xi_i^k (t) \in C^1([0,\tau], \mathbb R^I)$ exists for some $\tau > 0$. The energy estimate below, which is essentially obtained by multiplying \eqref{eqregultestedej} with $\xi_i^{j,k}(t)$ and summing over $j$, will indeed show that the solution exists for all times $t \geq 0$. Integrating \eqref{eqregultestedej} over $[0,t] \subset [0,\tau)$, we have 
%\[
%\frac12 \int_\Omega (w_i^k(x,t))^2 \, dx + d_i \int_0^t \int_\Omega H^\delta (w_i^k) \nabla w_i^k \cdot \nabla w_i^k \, dx \, ds = \frac12 \int_\Omega (w_i^k(x,0))^2 \, dx + \int_0^t \int_\Omega f_i^\varepsilon(w_i^k) w_i^k \, dx \, ds
%\]
\[
\| w_i^k(t) \|_{L^2(\Omega)}^2 + \delta d_i \int_0^t \| \nabla w_i^k(s) \|_{L^2(\Omega)}^2 \, ds \leq C \bigg( \| w_i^k(0) \|_{L^2(\Omega)}^2 + \int_0^t \int_\Omega \varepsilon^{-1} |w_i^k| \, dx \, ds \bigg), 
\]
where $C>0$ is independent of $k$ by the uniform bounds $H^\delta(w_i^k) \geq \delta$ and $f_i^\varepsilon(w_i^k) \leq \varepsilon^{-1}$. In order to control the $H^1(\Omega)$ norm of $w_i^k$, we first observe that $e_1$ is constant in space. Hence, \eqref{eqregultestedej} leads for all $t \in (0,\tau)$ to 
\[
\bigg| \int_\Omega w_i^k(x, t) \, dx \bigg| = \bigg| \int_\Omega w_i^k(x, 0) \, dx + \int_0^t \int_\Omega f_i^\varepsilon(w_i^k) \, dx \, ds \bigg| \leq \| w_i^k(0) \|_{L^1(\Omega)} + \frac{|\Omega|t}{\varepsilon}.
\]
Poincar\'e's inequality now yields 
\[
\| w_i^k(t) \|_{H^1(\Omega)}^2 %\leq C \bigg( \| \nabla w_i^k(t) \|_{L^2(\Omega)}^2 + \Big| \int_\Omega w_i^k(x,s) \, dx \, ds \Big|^2 \bigg) 
\leq C \bigg( \| \nabla w_i^k(t) \|_{L^2(\Omega)}^2 + \|w_i^k(0) \|_{L^2(\Omega)}^2 + \frac{|\Omega|^2\tau^2}{\varepsilon^2} \bigg).
\]
As a result, we arrive at 
\begin{align}
\label{eqregulbound}
\| w_i^k(t) \|_{L^2(\Omega)}^2 + \int_0^t \| w_i^k(s) \|_{H^1(\Omega)}^2 \, ds \leq C \bigg( 1 + \| w_i(0) \|_{L^2(\Omega)}^2 \bigg), 
\end{align}
where $C>0$ is independent of $k$, and which shows that the solution is bounded on $[0,\tau)$ and, thus, existing for all $t \geq 0$. Note that we applied the elementary estimate $\| w_i^k(0) \|_{L^2(\Omega)} \leq \| w_i(0) \|_{L^2(\Omega)}$ in the last step. 

\medskip

Let $P_k : L^2(\Omega) \rightarrow L^2(\Omega)$ be the orthogonal projection in $L^2(\Omega)$ onto $\mathrm{span}\{e_1, \dotsc, e_k\}$, and let $\phi \in L^2(0, T; H^1(\Omega))$ for some arbitrary but fixed $T > 0$. We recall the uniform bounds $H^\delta(w_i^k) \leq \delta^{-1} + \delta$ and $f_i^\varepsilon(w_i^k) \leq \varepsilon^{-1}$ leading to 
\begin{align*}
&\int_0^T \int_\Omega \partial_t w_i^k \phi \, dx \, dt = \int_0^T \int_\Omega \partial_t w_i^k P_k \phi \, dx \, dt \\ 
&\qquad = -d_i \int_0^T \int_\Omega H^\delta(w_i^k) \nabla w_i^k \cdot \nabla (P_k \phi) \, dx \, dt + \int_0^T \int_\Omega f_i^\varepsilon(w_i^k) P_k \phi \, dx \, dt \\ 
&\qquad \leq C \int_0^T \| \nabla w_i^k(t) \|_{L^2(\Omega)} \| \nabla (P_k \phi)(t) \|_{L^2(\Omega)} \, dt + C \int_0^T \| (P_k \phi)(t) \|_{L^2(\Omega)} \, dt \\
&\qquad \leq C( 1 + \| w_i^k \|_{L^2(0, T; H^1(\Omega))}) \| \phi \|_{L^2(0, T; H^1(\Omega))}
\end{align*}
with a constant $C > 0$ independent of $k$. We stress that the last estimate also employs the bound $\| (P_k \phi)(t) \|_{H^1(\Omega)} \leq \| \phi(t) \|_{H^1(\Omega)}$ for a.e.\ $t \in [0, T]$, which provides a uniform-in-$k$ bound for $\partial_t w_i^k \in L^2(0, T; (H^1(\Omega))')$ together with \eqref{eqregulbound}. 

\medskip 

We may choose a (not relabelled) subsequence $w_i^k$ being weakly converging for $k \rightarrow \infty$ to some $w_i$ in $L^2(0, T; H^1(\Omega))$ and in $H^1(0, T; (H^1(\Omega))')$. Strong convergence $w_i^k \rightarrow w_i$ in $L^2(0, T; L^2(\Omega)) \cong L^2(Q_T)$ now follows by means of the Aubin--Lions Lemma. Choosing another subsequence, we obtain for all $i=1,\ldots I$, $w_i^k \rightarrow w_i$ and, hence, $H^\delta(w_i^k) \rightarrow H^\delta(w_i)$ and $f_i^\varepsilon(w_i^k) \rightarrow f_i^\varepsilon(w_i)$ a.e.\ in $\Omega$ due to the continuity of $H^\delta$ and $f_i^\varepsilon$. The boundedness of $H^\delta$ and $f_i^\varepsilon$ further guarantees $H^\delta(w_i^k) \rightarrow H^\delta(w_i)$ and $f_i^\varepsilon(w_i^k) \rightarrow f_i^\varepsilon(w_i)$ strongly in $L^2(Q_T)$. This allows us to pass to the limit $k \rightarrow \infty$ in an integrated version of \eqref{eqregultestedej} resulting in  
\[
%\label{eqregulfinal}
\int_\Omega w_i \phi \, dx \bigg|_0^T - \int_{Q_T} w_i \partial_t \phi \, dx \, dt + d_i \int_{Q_T} H^\delta(w_i) \nabla w_i \cdot \nabla \phi \, dx \, dt \\ = \int_{Q_T} f_i^\varepsilon(w_i) \phi \, dx \, dt
\]
for all $\phi \in C^\infty(\overline \Omega \times [0,T])$. By a density argument, \eqref{bound} holds as an identity in $L^2(0, T; (H^1(\Omega))')$. 
}

\medskip
Thanks to the quasi-positivity assumption \eqref{F2}, the solution $w_i$ is also non-negative. For $p> 1$, we have
\begin{multline}\label{bound_0}
\partial_t\|w_i\|_{L^p(\Omega)}^p + d_ip(p-1)\int_{\Omega}H^{\delta}(w_i)w_i^{p-2}|\na w_i|^2dx \\ = p\int_{\Omega}w_i^{p-1}f_i^{\eps}(w)dx
\leq p\|f_i^\eps(w)\|_{L^p(\Omega)}\left(\|w_i(t)\|_{L^p(\Omega)}^{p}\right)^{1-\frac 1p}.
\end{multline}
Applying the Gronwall lemma $y' \leq \alpha(t)y^{1-1/p} \Rightarrow y(t) \leq \bra{y(0)^{1/p} + \frac 1p\int_0^t\alpha(s)ds}^{p}$ for $y(t) = \|w_i(t)\|_{L^p(\Omega)}^p$ entails 
\begin{align*}
	\|w_i(t)\|_{L^p(\Omega)}^p&\leq \bra{\|w_i(0)\|_{L^p(\Omega)} + \int_0^t\|f_i^\eps(w(s))\|_{L^p(\Omega)}ds}^p\\
	&\leq \bra{\|w_i(0)\|_{L^p(\Omega)} + \|f_i^\eps(w)\|_{L^p(Q_T)}T^{\frac{p-1}{p}}}^p.
\end{align*}
%\begin{equation}\label{bound_0}
%\begin{aligned}
%\sup_{t\in (0,T)}\|w_i(t)\|_{L^p(\Omega)}^p + d_ip(p-1)\int_0^T\int_{\Omega}H^{\delta}(w_i)w_i^{p-2}|\na w_i|^2\, dx \, dt\\
%\leq \|w_{i}(0)\|_{L^p(\Omega)}^p + p\int_0^T\int_{\Omega}w_i^{p-1}f_i^{\eps}(w)\, dx \, dt.
%\end{aligned}
%\end{equation}
Taking the root of order $p$ on both sides and letting $p\to \infty$, this leads to
\begin{equation*}
\sup_{t\in (0,T)}\|w_i(t)\|_{L^\infty(\Omega)} \leq \|w_i(0)\|_{L^{\infty}(\Omega)} + T\|f_i^\eps(w)\|_{L^{\infty}(Q_T)},
\end{equation*}
which means
\begin{equation}\label{bound_1}
\|w_i\|_{L^{\infty}(Q_T)} \leq C_{\eps,T} \quad \text{ for all } \quad i=1,\ldots, I,
\end{equation}
where $C_{\eps,T}$ is {\it independent} of $\delta>0$. Therefore, 
\begin{equation*}
H^{\delta}(w_i) = \frac{m_iw_i^{m_i-1}}{1+\delta m_iw_i^{m_i-1}} + \delta \geq \frac{m_iw_i^{m_i-1}}{1+m_iC_{\eps,T}^{m_i-1}} + \delta.
\end{equation*}
By integrating \eqref{bound_0} on $(0,T)$, and using \eqref{bound_1}, we get
\begin{equation}\label{bound_0_1}
\begin{aligned}
&m_i \int_0^T\int_{\Omega}w_i^{m_i-1}w_i^{p-2}|\na w_i|^2\, dx \, dt + \delta\int_0^T\int_{\Omega}w_i^{p-2}|\na w_i|^2\, dx \, dt\\
&\qquad \leq \frac{1}{d_i(p-1)}\bra{\|w_i(0)\|_{L^p(\Omega)}^p + \int_0^T\int_{\Omega}f_i^\eps(w)w_i^{p-1}\, dx \, dt}\\
&\qquad \leq \frac{1}{d_i(p-1)}\bra{\|w_i(0)\|_{L^p(\Omega)}^p + |\Omega| T \|f_i^\eps(w)\|_{L^\infty(Q_T)}C_{\eps,T}^{p-1}}, 
\end{aligned}
\end{equation}
which implies, in particular,
\begin{equation}\label{bound_2}
\Big\|\na  \Big(w_i^{\frac{m_i+p-1}{2}} \Big) \Big\|_{L^2(Q_T)} \leq C_{\eps,T}
\end{equation}
for all $p>1$. By testing the equation \eqref{bound} with $\phi \in L^2(0,T;H^1(\Omega))$ and keeping \eqref{bound_1} in mind, we see that $\{\partial_t w_i\}$ is bounded in $L^2(0,T;(H^1(\Omega))^{'})$ uniformly in $\delta>0$. Hence, from this, \eqref{bound_1}, and \eqref{bound_2}, we can apply a non-linear Aubin--Lions lemma, see e.g.\ \cite[Theorem~1]{moussa2016some}, to ensure the existence of a subsequence (not relabelled) such that $w_i = u_{i}^{\eps,\delta} \to u_i^{\eps}$ strongly in $L^2(Q_T)$ as $\delta\to 0$. From here onwards, we will write $u_i^{\eps,\delta}$ instead of $w_i$. Thanks to the $L^\infty$ bound \eqref{bound_1}, this convergence in fact holds in $L^p(Q_T)$ for any $1\leq p<\infty$. It remains to pass to the limit $\delta \to 0$ in the weak formulation of \eqref{bound} with $\psi \in L^2(0,T;H^1(\Omega))$,
\begin{multline}\label{bound_3}
\int_{t_0}^{t_1}\langle \partial_t u_i^{\eps,\delta}, \psi \rangle_{(H^1(\Omega))', H^1(\Omega)} \, dt + d_i\int_{t_0}^{t_1}\int_{\Omega}\frac{m_i(u_i^{\eps,\delta})^{m_i-1}}{1+\delta m_i(u_i^{\eps,\delta})^{m_i-1}}\na u_i^{\eps,\delta}\cdot \na \psi \, dx \, dt\\
+ d_i\delta\int_{t_0}^{t_1}\int_{\Omega}\na u_i^{\eps,\delta}\cdot \na \psi \, dx \, dt = \int_{t_0}^{t_1}\int_{\Omega}f_i^{\eps}(u^{\eps,\delta})\psi \, dx \, dt.
\end{multline}
The convergence of the first term on the left hand side and the last term on the right hand side of \eqref{bound_3} is immediate. From
\begin{equation*}
%	\nabla (w_i^{\frac{m_i+p-1}{2}}) = \frac{m_i+p-1}{2}u_i^{\frac{m_i+p-3}{2}}\nabla w_i \quad \Rightarrow \quad 
	\na u_i^{\eps,\delta} =  \frac{2}{m_i+p-1} \, (u_i^{\eps,\delta})^{\frac{3-m_i-p}{2}} \; \na (u_i^{\eps,\delta})^{\frac{m_i+p-1}{2}},
\end{equation*}
we can choose $1<p<3-m_i$ (since $m_i<2$) to get 
\begin{equation}\label{bound_4}
\|\nabla u_i^{\eps,\delta}\|_{L^2(Q_T)} \leq C_{\eps,T}
\end{equation}
thanks to the uniform bounds \eqref{bound_1} and \eqref{bound_2}.
For the third term on the left hand side of \eqref{bound_3}, we estimate
\begin{equation*}
\left|\delta \int_{t_0}^{t_1}\int_{\Omega}\na u_i^{\eps,\delta}\na \psi \, dx \, dt\right| \leq \delta\|\na u_i^{\eps,\delta}\|_{L^2(Q_T)}\|\na \psi\|_{L^2(Q_T)} \leq C_{\eps,T}{\delta}\|\na\psi\|_{L^2(Q_T)},
\end{equation*}
and, therefore, it converges to zero as $\delta \to 0$. Finally, the convergence 
\begin{equation*}
\int_{t_0}^{t_1}\int_{\Omega}\frac{m_i(u_i^{\eps,\delta})^{m_i-1}}{1+\delta m_i(u_i^{\eps,\delta})^{m_i-1}}\na u_i^{\eps,\delta}\cdot \na \psi \, dx \, dt \xrightarrow{\delta\to 0} \int_{t_0}^{t_1}\int_{\Omega}m_i(u_i^{\eps})^{m_i-1}\na u_i^{\eps}\cdot \na \psi \, dx \, dt 
\end{equation*}
follows from $\frac{m_i(u_i^{\eps,\delta})^{m_i-1}}{1+\delta m_i(u_i^{\eps,\delta})^{m_i-1}} \xrightarrow{\delta \to 0} m_i(\ue_i)^{m_i-1}$ a.e.\ in $Q_T$, \eqref{bound_1}, and \eqref{bound_4}.

\section{Proof of a uniform bound on approximate solutions}\label{app2} %Lemma \ref{add_lem0}
We provide a proof of Lemma \ref{add_lem0}. Define the new variables
\begin{equation*}
\we_i = \ue_i \log \ue_i + (\mu_i-1)\ue_i + e^{-\mu_i} \geq 0
\end{equation*}
and
\begin{equation*}
\ze_i = (\log \ue_i + \mu_i)(\ue_i)^{m_i} - \frac{(\ue_i)^{m_i} - e^{-\mu_i m_i}}{m_i} \geq 0,
\end{equation*}
where the non-negativity follows from elementary calculus. Direct computations give
\begin{equation*}
\partial_t \we_i - d_i\Delta \ze_i = (\log \ue_i + \mu_i)f^{\eps}_i(\ue) - d_im_i(\ue_i)^{m_i-2}|\na \ue_i|^2.
\end{equation*}
Therefore, thanks to \eqref{entropy-inequality},
\[
\partial_t\bra{\sum_{i=1}^{I}\we_i} - \Delta\bra{\sum_{i=1}^{I}d_i\ze_i} \leq C \, \sumi \bra{1+\ue_i\log \ue_i} \leq A + \beta \sumi \we_i
\]
with fixed positive constants $A, \beta > 0$ and a generic constant $C > 0$. Introducing $v(x, t) \coleq e^{-\beta t} \sumi \we_i$, this leads to 
\begin{equation}\label{c1}
\partial_t v - \Delta \bra{e^{-\beta t} \sumi d_i \ze_i} \leq A e^{-\beta t}.
\end{equation}
We integrate \eqref{c1} in time on $(0,t)$, then multiply the resultant by $e^{-\beta t} \sum_{i=1}^{I}d_i\ze_i$ and integrate in $Q_T$, to get
\begin{multline}\label{c2}
\int_{Q_T} v \, e^{-\beta t} \sumi d_i\ze_i\, dx \, dt - \int_{Q_T}\bra{\Delta \int_0^t e^{-\beta s} \sumi d_i\ze_i \, ds}\bra{e^{-\beta t} \sumi d_i \ze_i} dx \, dt\\
\leq \int_{Q_T} \bra{\frac{A}{\beta} + v(x,0)} e^{-\beta t} \sumi d_i\ze_i\, dx \, dt.
\end{multline}
For the second term on the left hand side of \eqref{c2}, we have
\begin{align*}
&- \int_{Q_T}\bra{\Delta \int_0^t e^{-\beta s} \sumi d_i\ze_i \, ds}\bra{e^{-\beta t} \sumi d_i \ze_i} dx \, dt\\
&\qquad = \int_{Q_T}\na \Bigg( \underbrace{\int_0^t e^{-\beta s} \sumi d_i\ze_i(x,s) \, ds}_{=:\,\varphi(x,t)} \Bigg) \cdot \na \Bigg( \underbrace{e^{-\beta t} \sumi d_i \ze_i(x,t)}_{=\partial_t\varphi(x,t)} \Bigg) \, dx \, dt\\
&\qquad = \frac 12 \int_0^T \frac{d}{dt}\int_{\Omega}|\na \varphi (x,t)|^2\, dx \, dt = \frac 12\int_{\Omega}|\na \varphi(x,T)|^2\, dx \geq 0
\end{align*}
where we used $\varphi(x,0) = 0$ at the last step. On the other hand, \eqref{c1} ensures that
\begin{equation}\label{aa}
-\Delta \int_0^T e^{-\beta t} \sumi d_i\ze_i \, dt \leq \frac{A}{\beta} + v(x,0).
\end{equation}
By putting $y = \int_0^T e^{-\beta t} \sum_{i=1}^I d_iz_i^{\eps} \, dt$, we have $-\Delta y + y\leq y + \frac{A}{\beta} + v(x,0)$. Multiplying both sides by $y$ and integrating over $\Omega$ results in 
\begin{equation*}
\|y\|_{H^1(\Omega)}^2 \leq 2 \|y\|_{L^2(\Omega)}^2 + \left\| \frac{A}{\beta} + v(0,x) \right\|_{L^2(\Omega)}^2.
\end{equation*}
By an interpolation estimate and Young's inequality, there exist $\theta \in (0,1)$ and $c, \gamma, \Gamma > 0$ with $\gamma$ arbitrarily small satisfying $\|y\|_{L^2(\Omega)}^2 \leq c \|y\|_{H^1(\Omega)}^{2\theta} \|y\|_{L^1(\Omega)}^{2(1-\theta)} \leq \gamma \|y\|_{H^1(\Omega)}^2 + \Gamma \|y\|_{L^1(\Omega)}^2$. Hence, we obtain 
\begin{equation*}
	\|y\|_{L^2(\Omega)} \leq C \bra{1 + \|y\|_{L^1(\Omega)}}.
\end{equation*}
Using \eqref{c2} and the above bound on $y$, it follows that
\begin{multline}\label{c4}
\int_{Q_T}\bra{\sumi w_i^\eps}\bra{\sumi d_iz_i^\eps}dx \, dt \leq C_T \int_\Omega \bra{\frac{A}{\beta} + v(x,0)} y \, dx \\
%&\qquad \leq \|y\|_{L^2(\Omega)}\left\|\sumi\we_i(x,0)\right\|_{L^2(\Omega)}\\
%&\qquad \leq \left\|\sumi\we_i(x,0)\right\|_{L^2(\Omega)}\|y\|_{L^1(\Omega)} + C\left\|\sumi\we_i(x,0)\right\|_{L^2(\Omega)}^2\\
\qquad \leq C_T \bra{1 + \|y\|_{L^1(\Omega)}} \leq C_T \bra{1 + \int_{Q_T}\sumi d_iz_i^\eps \, dx \, dt}.
\end{multline}
From the definition of $w_i^\eps$ and $z_i^\eps$, it is easy to see that for any $\delta>0$, there exists some $C_\delta>0$ such that
\begin{equation*}
\sumi d_iz_i^\eps \leq \delta \bra{\sumi w_i^\eps}\bra{\sumi d_iz_i^\eps} + C_{\delta}.
\end{equation*}
We then derive from \eqref{c4} that
\begin{equation*}
\int_{Q_T}\bra{\sumi w_i^\eps}\bra{\sumi d_iz_i^\eps} dx \, dt \leq C.
\end{equation*}
The desired estimate \eqref{uniform-bound} is now an immediate consequence.

\medskip
\noindent{\bf Acknowledgements.} A major part of this work was carried out when B.\ Q.\ Tang visited the Institute of Science and Technology Austria (IST Austria). The hospitality of IST Austria is greatly acknowledged.

This work is partially supported by the International Research Training Group IGDK 1754 ``Optimization and Numerical Analysis for Partial Differential Equations with Nonsmooth Structures'', funded by the German Research Council (DFG) project number 188264188/GRK1754 and the Austrian Science Fund (FWF) under grant number W 1244-N18 and NAWI Graz.

\bibliographystyle{abbrv}
\bibliography{porous_medium}
\end{document}